\documentclass[reqno]{amsart}

\usepackage{graphicx}
\usepackage{amsfonts,amsmath, amssymb}
\usepackage{marginnote}
\usepackage{color}
\usepackage{cite}
\usepackage{bm}

\numberwithin{equation}{section}

\topmargin -0.10in%
\oddsidemargin 0.12in%
\evensidemargin 0.12in%
\textwidth 16.3cm%
\textheight 22.5cm%

\arraycolsep=1.5pt


\theoremstyle{plain}
\newtheorem{theorem}{Theorem}[section]

\newtheorem{lemma}[theorem]{Lemma}
\newtheorem{proposition}[theorem]{Proposition}

\theoremstyle{remark}
\newtheorem{remark}[theorem]{Remark}

\def\k{\kappa}

\def\f{\frac}

\def\b{\bar}

\usepackage{tikz}

\newcommand{\dd}{{\rm d}}
\newcommand{\tn}{{\tilde{n}}}

\newcommand{\R}{\mathbb{R}}

\newcommand{\Z}{\mathbb{Z}}

\newcommand{\T}{\mathbb{T}}


\newcommand{\FA}{\bm{A}}
\newcommand{\FB}{\bm{B}}
\newcommand{\FE}{\bm{E}}

\newcommand{\FC}{\mathbf{C}}
\newcommand{\FD}{\mathbf{D}}
\newcommand{\Fe}{\mathbf{e}}
\newcommand{\FF}{\mathbf{F}}
\newcommand{\Ff}{\mathbf{f}}

\newcommand{\FH}{\mathbf{H}}

\newcommand{\FX}{\mathbf{X}}
\newcommand{\FR}{\mathbf{R}}

\newcommand{\FW}{\mathbf{W}}

\newcommand{\FU}{\mathbf{U}}

\newcommand{\FS}{\mathbf{S}}
\newcommand{\Fq}{\mathbf{q}}
\newcommand{\Fr}{\mathbf{r}}

\newcommand{\Fs}{\mathbf{s}}
\newcommand{\Ft}{\mathbf{t}}

\newcommand{\CB}{\mathcal{B}}

\newcommand{\CE}{\mathcal{E}}
\newcommand{\CF}{\mathcal{F}}

\newcommand{\CI}{\mathcal{I}}
\newcommand{\CJ}{\mathcal{J}}

\newcommand{\CQ}{\mathcal{Q}}
\newcommand{\CV}{\mathcal{V}}
\newcommand{\CG}{\mathcal{G}}
\newcommand{\CP}{\mathcal{P}}
\newcommand{\CX}{\mathcal{X}}

\newcommand{\hu}{\hat{u}_n}
\newcommand{\hv}{\hat{v}_n}
\newcommand{\hh}{\hat{h}_n}
\newcommand{\hg}{\hat{g}_n}

\newcommand{\pa}{\partial}
\newcommand{\ka}{\kappa}

\newcommand{\vep}{\varepsilon}


\begin{document}
	\date{\today}
\title[Stability of steady MHD boundary layers]{Validity of Prandtl expansions for steady MHD in the Sobolev framework}

\author[C.-J. Liu]{Cheng-Jie Liu}
\address[C.-J. Liu]{School of Mathematical Sciences, Institute of Natural Sciences, Center of Applied Mathematics, LSC-MOE and SHL-MAC,  Shanghai Jiao Tong University, Shanghai, China}
\email{liuchengjie@sjtu.edu.cn}

\author[T. Yang]{Tong Yang}
\address[T. Yang]{Department of Mathematics, City University of Hong Kong, Hong Kong;
\&  School of Mathematics and Statistics,   Chongqing University,  Chongqing, China}
\email{matyang@cityu.edu.hk}

\author[Z. Zhang]{Zhu Zhang}
\address[Z. Zhang]{Hong Kong Institute for Advanced Study, City University of Hong Kong, Hong Kong}
\email{zhuzhangpde@gmail.com}

\maketitle

\begin{abstract}
	This paper is concerned with the vanishing viscosity and magnetic resistivity limit for the two-dimensional steady incompressible MHD system on the half plane with no-slip boundary condition on velocity field and perfectly conducting wall condition on magnetic field. We prove the nonlinear stability of shear flows of Prandtl type with nondegenerate tangential magnetic field, but without any positivity or monotonicity assumption on the velocity field. It is in sharp contrast to the steady Navier-Stokes equations and reflects the stabilization effect of magnetic field. Unlike the unsteady MHD system, we manage the degeneracy on the boundary caused by no-slip boundary condition and obtain the estimates of solutions by introducing an intrinsic weight function and some good auxiliary functions.    
\end{abstract}

\tableofcontents

\section{Introduction}
In this paper, we consider the vanishing viscosity and magnetic resistivity limit of the two-dimensional steady MHD system in $\Omega=\{(x,y)~|~ x\in\T_\varrho, y>0\}:$
\begin{equation}
 \left\{
  \begin{aligned}\label{1.1}
         &\FU\cdot\nabla\FU+\nabla P-\FH\cdot\nabla\FH-\mu\vep\Delta\FU=\FF_\FU,\\
         &\FU\cdot\nabla\FH-\FH\cdot\nabla\FU-\ka\vep\Delta\FH=\FF_\FH,\\
         &\nabla\cdot \FU=\nabla\cdot \FH=0.
  \end{aligned}
 \right.
\end{equation}
Here $\FU=(u,v)$, $\FH=(h,g)$ and $P$ stand for the velocity field, magnetic field and total pressure respectively, and the vectors $\FF_\FU=(F_{1,\FU},F_{2,\FU}),\FF_{\FH}=(F_{1,\FH},F_{2,\FH})$ are given 
external forces. The tangential variable $x$ takes value in torus $\T_\varrho=\mathbb{R}/(2\pi \varrho)\mathbb{Z}$ with periodicity $2\pi \varrho$, and the normal variable $y>0$ with the boundary $\{y=0\}$. $\mu\vep$ and $\ka\vep$ are viscosity and magnetic resistivity coefficients respectively with $\vep\ll1$ and positive constants $\mu, \ka$. We impose the steady MHD system with the following no-slip boundary condition on velocity field and perfectly conducting wall condition on magnetic field:
\begin{align}\label{1.1-1}
\FU|_{y=0}=(\partial_y h,g)|_{y=0}=\mathbf{0}.
\end{align}
Moreover, it is natural to assume the compatibility condition for $\FF_{\FH}=(F_{1,\FH},F_{2,\FH})$:
\begin{align}\label{com-div}
	\nabla\cdot\FF_\FH=0,\quad F_{2,\FH}|_{y=0}=0.
\end{align} 

We are concerned with the asymptotic behavior of solutions $(\FU,\FH)$ to \eqref{1.1}-\eqref{1.1-1} as $\vep\rightarrow0$, and it is a high Reynolds numbers limit problem, one of the fundamental topics in hydrodynamics. It is well-known that such problems are very important and challenging in the presence of boundary, especially when considering the no-slip boundary conditions for the velocity field. The key issue is the large vorticity for small viscosity near the boundary, the so-called boundary layer phenomenon. A major and powerful tool for studying these problems is boundary layer theory, which was introduced by Prandtl \cite{P} in 1904. According to Prandtl's theory, the boundary layer of the system \eqref{1.1}-\eqref{1.1-1} is characteristic with the scale $\sqrt{\vep}$, and the solutions should have the following asymptotic behavior:
\begin{itemize}
	\item $(\FU,\FH)(x,y)\sim (\FU^I,\FH^I)(x,y)$ away from the boundary, where $(\FU^I,\FH^I)$ satisfies the ideal MHD system with the boundary conditions $\FU^I\cdot\vec{n}|_{y=0}=\FH^I\cdot\vec{n}|_{y=0}=0.$
	\item $(\FU,\FH)(x,y)\sim \left(u^p(x,\frac{y}{\sqrt{\vep}}),\sqrt{\vep}v^p(x,\frac{y}{\sqrt{\vep}}),h^p(x,\frac{y}{\sqrt{\vep}}),\sqrt{\vep}g^p(x,\frac{y}{\sqrt{\vep}})\right)$ near the boundary, where $(u^p,v^p,$
	$h^p,g^p)(x,Y)$ satisfies a Prandtl-type system with the boundary conditions\\ $(u^p,v^p,\partial_Yh^p,g^p)|_{y=0}=\mathbf{0}$ and far-field conditions: $\lim\limits_{Y\rightarrow+\infty}(u^p,h^p)(x,Y)$ matches  the trace of tangential components of $(\FU^I,\FH^I)$ on the boundary $\{y=0\}$. 
\end{itemize}
Mathematically, it follows two fundamental problems:
\begin{itemize}
	\item the well-posedness/ill-posedness of the Prandtl boundary layer system;
	\item the rigorous justification of the Prandtl expansion for small viscosity.
\end{itemize}
Let us stress that the first problem is of course very important and has 
{a lot of }results, while in the present paper we mainly focus on the second problem corresponding to no-slip boundary conditions for the flow. For more on boundary layer theory, see the reviews \cite{E,M-M} and the references therein.

Before stating the main result of this paper, we review some mathematical results on the validity of Prandtl asymptotics. Let us first focus on the situations related to the classical unsteady Navier-Stokes equations with no-slip boundary conditions. To the best of our knowledge, the first rigorous verification of the Prandtl boundary layer theory was achieved in the analytic framework for both 2D and 3D cases by Sammartino and Caflisch in the celebrated paper \cite{SC2}. One can also refer to \cite{WWZ} for a new proof for 2D case based on direct energy method. After that, a notable step forward in this direction was made by Maekawa, and in \cite{Mae2} he justified rigorously the Prandtl ansatz in the inviscid limit for 2D Navier-Stokes equations with the initial vorticity supported away from the boundary, which implies some kind of analyticity near the boundary. This result was generalized to 3D in \cite{FTZ}. Of course, it is more important to show the justification of Prandtl ansatz for data with finite Sobolev regularity, since it is more physically relevant. However, known results in this direction are still far from optimistic due to a number of reasons, such as the reverse flow, Tollmien–Schlichting wave and so on, see 
{physical} literatures \cite{D-R,Sch}. 
At the mathematical level, the existing results related to validity of Prandtl boundary layer theory in the Sobolev framework are also far from satisfactory, according to the instability of Prandtl asymptotics of shear flow type obtained in some recent papers. Precisely, Grenier and Nguyen established counterexamples to nonlinear stability of Prandtl boundary layer profiles with inflexion points in \cite{Grenier1, G-N,G-N2}. Even for the monotonic and concave Prandtl boundary layer profiles, we may not expect the nonlinear stability of Prandtl boundary layer in Sobolev setting. In the notable work \cite{GGN3}, the authors studied the linearized Navier–Stokes equations around generic stationary shear flows of the boundary layer type and constructed solutions with highly growing eigenmodes like $e^{t/{\nu^{\frac{1}{4}}}}$ ($\nu:$ viscosity) related to the $O(\nu^{-\frac{3}{8}})$ tangential frequency, see \cite{GGN2} for related statements and \cite{G-N1, G-N3,G-N4} for new progress. The result in \cite{GGN3} suggests somehow that one can only prove the validity of Prandtl boundary layer theory 
in the function spaces of Gevrey class, and recently there are several interesting  work in this direction, see \cite{CWZ,GMM1,GMM2}. 

Now we turn  to the steady Navier-Stokes case, and surprisingly the situation is more satisfactory than the unsteady case. The first rigorous result on the validity of steady Prandtl boundary layer profiles was proved by Guo and Nguyen in \cite{GN}, in which they consider the steady Navier-Stokes equations in the domain $\{(x,y)\in [0,L]\times \R_+\}$ with a positive Dirichlet boundary condition for the tangential velocity, the so-called moving plate. They constructed general boundary layer expansions for small viscosity and proved their validity in the Sobolev framework for small $L$, see also some generalizations \cite{Iy,Iy1,Iy2,Iy3,Iy4}. Note that the moving plate condition is not the no-slip boundary condition and avoids some difficulties from degeneracy on the boundary due to the vanishing tangential velocity. 
In \cite{G-I}, Guo and Iyer generalized the result to the case with homogeneous Dirichlet boundary conditions, the same as no-slip boundary condition. And the boundary layer profiles in the Prandtl ansatz studied in \cite{G-I} involve the famous Blasius flow. Very recently, Gao and Zhang gave a simplified proof of this result in \cite{G-Z}. In another important work \cite{GM}, G\'erard-Varet and Maekawa studied the steady Navier-Stokes equations with no-slip boundary condition and some additional source terms in the same domain as the present paper, and obtained the $H^1$ stability of shear flows of Prandtl type.

Back to the MHD system, its boundary layer theory is richer because of different choices of magnetic 
physical parameters, one can refer \cite{G-P,WX} for more details. In the 2D unsteady MHD system when viscosity and magnetic resistivity tend to zero at the same rate, the stabilization effect from non-degenerate tangential magnetic field was discovered in \cite{G-P,LXY1,LXY2}, and the validity of Prandtl boundary layer theory was rigorously proved in \cite{LXY2}, in sharp contrast with the unsteady Navier-Stokes system. 
As a further step in this direction, 
the purpose of this paper is to reveal the stability mechanism of magnetic field for the steady system \eqref{1.1}-\eqref{1.1-1} in order to justify the stability of shear flows of Prandtl type in the Sobolev framework. 
Let us mention that in  \cite{D-L-X} the authors extended the result in \cite{GN} to 2D steady MHD system with moving plate condition, and the stability mechanism is from the non-degenerate velocity field but
not from the magnetic field that is consistent with \cite{GN},   

To state the main result in this paper, let us first introduce some notations and assumptions. Denote by 
$$(\FU_s,\FH_s)(Y)=\big(U_s(Y),0,H_s(Y),0\big),\qquad Y:=\frac{y}{\sqrt{\vep}}$$ 
 a background shear flow with $U_s(0)=H_s'(0)=0$. 
We can see that it is a special solution to \eqref{1.1} when the external forces
$$(\FF_{\FU},\FF_{\FH})=(\FF_{\FU_s},\FF_{\FH_s}):=(-\vep\mu\pa_y^2U_s,0,-\vep\ka\pa_y^2H_s,0)=\left(-\mu\pa_Y^2U_s(Y),0,-\ka\pa_Y^2H_s(Y),0\right).$$ 
We are interested in a general class of shear flow that satisfies the following assumptions.\\

{\bf Assumptions:}
\begin{itemize}
	\item $U_s,H_s\in C^3(\mathbb{R}_+)\cap C^1(\overline{\mathbb{R}_+})$ such that 
	\begin{align}\label{A1}
	U_s(0)=0,~ H_s'(0)=0,~\lim_{Y\rightarrow +\infty}U_s(Y)=U_E, ~\lim_{Y\rightarrow +\infty}H_s(Y)=
	H_E\neq 0,
	\end{align}
	and
	\begin{align}\label{A3}
		\bar{M}:=\sum_{1\leq k\leq3}\sup_{Y\geq0}(1+Y)^3\left(|\pa_Y^kU_s(Y)|+|\pa_Y^kH_s(Y)|\right)<\infty.
	\end{align}
	\item There are two positive constants $\underline{\gamma},\bar{\gamma}>0$, such that 
	\begin{align}
	\underline{\gamma}\leq |H_s(Y)|\leq \bar{\gamma},\text{ for any }Y>0.\label{A2}
	\end{align}
	And set $\displaystyle G_s(Y):=H_s^2(Y)-U_s^2(Y),$ it holds 
	\begin{align}\label{A4}
		\gamma_0:=\inf_{Y\geq 0}G_s(Y)=\inf_{Y\geq 0}\left(H_s^2(Y)-U_s^2(Y)\right)>0.
	\end{align} 
\end{itemize}

Note that 
$\b{M}$  measures the amplitude of perturbation of the boundary layer profile $(U_s,H_s)$ around its far-field $(U_E,H_E)$, and \eqref{A4} implies that magnetic field dominates velocity field.

In this paper, we will show the stability of $(\FU_s,\FH_s)$ in Sobolev spaces for the problem \eqref{1.1}-\eqref{1.1-1}. Set $$(\widetilde{\FU},\widetilde{\FH})=(\FU,\FH)-(\FU_s,\FH_s)\triangleq(\tilde{u},\tilde{v},\tilde{h},\tilde{g})$$ 
be the perturbation of $(\FU_s,\FH_s)$. 
From \eqref{1.1}-\eqref{1.1-1} the problem for $(\widetilde{\FU},\widetilde{\FH})$ is written as
\begin{equation}\label{1.2}
\left\{
\begin{aligned}
&U_s\pa_x\widetilde{\FU}+\tilde{v}\pa_yU_s\Fe_1-H_s\pa_x\widetilde{\FH}-\tilde{g}\pa_yH_s\Fe_1+ \nabla P-\mu\vep\Delta\widetilde{\FU}=-\widetilde{\FU}\cdot\nabla \widetilde{\FU}+\widetilde{\FH}\cdot\nabla \widetilde{\FH}+\Ff_\FU,\\
&U_s\pa_x\widetilde{\FH}+\tilde{v}\pa_yH_s\Fe_1-H_s\pa_x\widetilde{\FU}-\tilde{g}\pa_yU_s\Fe_1-\ka\vep\Delta\widetilde{\FH}=-\widetilde{\FU}\cdot\nabla \widetilde{\FH}+\widetilde{\FH}\cdot\nabla \widetilde{\FU}+\Ff_\FH,\\
&\nabla\cdot \widetilde{\FU}=\nabla\cdot \widetilde{\FH}=0,\\
&\widetilde{\FU}|_{y=0}=(\partial_y\tilde{h},\tilde{g})|_{y=0}=\mathbf{0},
\end{aligned}
\right.
\end{equation}
where the vector $\Fe_1=(1,0)$, and the source term
$$(\Ff_\FU,\Ff_{\FH}):=(\FF_{\FU},\FF_{\FH})-(\FF_{\FU_s},\FF_{\FH_s})\triangleq(f_{1,\FU},f_{2,\FU},f_{1,\FH},f_{2,\FH})$$ 
satisfying $\nabla\cdot\Ff_\FH=0,~ f_{2,\FH}|_{y=0}=0$ by virtue of \eqref{com-div}. Before stating the main result, we  introduce the function spaces used in the paper. For any $x$-dependent function $f(x)\in L^2(\T_\varrho),$ we denote by $f_n$ its $n$-th Fourier coefficient, i.e.,
$$f_n=\frac{1}{2\pi\varrho}\int_0^{2\pi\varrho}e^{-i\tilde{n}x}f(x)\dd x,\qquad n\in\Z,\quad \tilde{n}=\f{n}{\varrho},
$$
and by $\CP_nf=f_ne^{i\tilde{n}x}$ the corresponding orthogonal projection on the $n$-th Fourier mode. The divergence-free and boundary conditions in \eqref{1.2} imply
\begin{align*}
	(\widetilde{\FU}_0,\widetilde{\FH}_0)=(\tilde{u}_0,0,\tilde{h}_0,0).
\end{align*}
We further denote $\CQ_0f=(I-\CP_0)f$ to be the projection on the non-zero Fourier modes. To study \eqref{1.2}, we need use a suitable solution space. Denote by $H^s$ and $\dot{H}^s, s\in\R$ the inhomogeneous and homogeneous Sobolev spaces respectively, and define the subspace of $H^s$:
\begin{align*}	H^s_\sigma:=\big\{\FU=(U_1,U_2)\in H^s(\Omega)~\big|~\nabla\cdot\FU=0,~U_2|_{y=0}=0
\big\}.\end{align*} 
Motivated by \cite{GM}, we define the function space $\CX$ for  $(\widetilde{\FU},\widetilde{\FH})=(\tilde{u},\tilde{v},\tilde{h},\tilde{g})$ as follows.
\begin{align}\label{X}
\begin{aligned}
\mathcal{X}=\bigg\{(\widetilde{\FU},\widetilde{\FH})~\big|~&
 \widetilde{\FU}|_{y=0}=(\pa_y\tilde{h},\tilde{g})|_{y=0}=\mathbf{0},\quad\left(\widetilde{\FU}_0,\widetilde{\FH}_0\right)=(\tilde{u}_0,0,\tilde{h}_0,0)\in L^\infty(\mathbb{R}_+)\cap \dot{H}^1(\mathbb{R}_+), \\
&\left(\CQ_0\widetilde{\FU},\CQ_0\widetilde{\FH}\right)\in H^{1}_\sigma(\Omega),	\quad \|(\widetilde{\FU},\widetilde{\FH})\|_{\CX}<\infty \bigg\},
\end{aligned}\end{align}	
where the norm $\|(\widetilde{\FU},\widetilde{\FH})\|_{\CX}$ is given by
\begin{align}\label{1.1-3}
\begin{aligned}
\|(\widetilde{\FU},\widetilde{\FH})\|_{\CX}:=&\sum_{n}\|(\widetilde{\FU}_n,\widetilde{\FH}_n)\|_{L^\infty(\mathbb{R}_+)}+\vep^{\f14}\|(\pa_y\tilde{u}_0,\pa_y\tilde{h}_0)\|_{L^2(\mathbb{R}_+)}+\|Z^{\f12}(\pa_y\tilde{u}_0,\pa_y\tilde{h}_0)\|_{L^2(\mathbb{R}_+)}\\
&+\vep^{-\f14}\|(\CQ_0\widetilde{\FU},\CQ_0\widetilde{\FH})\|_{L^2(\Omega)}+\vep^{-\f12}\|Z^{\f12}(\CQ_0\widetilde{\FU},\CQ_0\widetilde{\FH})\|_{L^2(\Omega)}\\
&+\vep^{\f14}\|(\nabla\CQ_0\widetilde{\FU},\nabla\CQ_0\widetilde{\FH})\|_{L^2(\Omega)}+\|Z^{\f12}(\nabla\CQ_0\widetilde{\FU},\nabla\CQ_0\widetilde{\FH})\|_{L^2(\Omega)}.
\end{aligned}\end{align}
Here 
the weight function $Z^{\frac{1}{2}}$ satisfies that $Z=Z(y)\in C^2(\R_+)$, $Z(y)\sim y$ for $y\in(0,2)$ and remains constant for $y\geq 2$. We will specify it later in Section 2. Moreover for simplicity we assume that $\Ff_\FU=\CQ_0\Ff_\FU$ and $\Ff_\FH=\CQ_0\Ff_{\FH}$, since one can extend our result to the general case by adding some shear flow profile, corresponding to the nonzero $\big(\Ff_{\FU,0},\Ff_{\FH,0}\big)$, to the solution $\big(\widetilde{\FU},\widetilde{\FH}\big)$. 

Our main result is presented as follows.
\begin{theorem} \label{thm1}
	Let $(\FU_s,\FH_s)$ be a given shear flow that satisfies assumptions \eqref{A1}-\eqref{A4}. There exist positive constants $\delta_1, \delta_2$ and $\vep_0$, such that for any $\vep\in(0,\vep_0)$ and  $\eta>0$, if 
	\begin{align}
	\varrho(\b{M}+\bar{M}^4)\in(0,\delta_1),\label{1.1-2}
 \end{align}
and $$\|(\Ff_\FU,\Ff_{\FH})\|_{L^2(\Omega)}+\vep^{-\f14}\|Z^{\f12}(\Ff_\FU,\Ff_{\FH})\|_{L^2(\Omega)}\leq \frac{\delta_2\vep^{\f34}}{|\log\vep|^{3+\eta}},$$ then \eqref{1.2} admits a unique solution $(\widetilde{\FU},\widetilde{\FH},\nabla P): (\widetilde{\FU},\widetilde{\FH})\in \CX\cap H^2_{loc}(\Omega), \nabla P\in L^2(\Omega)$ that satisfies the estimate: 
\begin{align}
\|(\widetilde{\FU},\widetilde{\FH})\|_{\CX}\leq C\vep^{-\f14}|\log\vep|^{\frac{3+\eta}{2}}\left(\|(\Ff_\FU,\Ff_{\FH})\|_{L^2(\Omega)}+\vep^{-\f14}\|Z^{\f12}(\Ff_\FU,\Ff_{\FH})\|_{L^2(\Omega)}\right),\label{est}
\end{align}
where $C$ is independent of $\varepsilon$. 
\end{theorem}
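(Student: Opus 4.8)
\section{Proof Strategy}

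The plan is to solve the linearized problem first and then close a fixed-point argument on the nonlinearity. Decomposing into Fourier modes in $x$, I would treat the zero mode $(\tilde{u}_0,\tilde{h}_0)$ and the nonzero modes $(\CQ_0\widetilde{\FU},\CQ_0\widetilde{\FH})$ separately. For the zero mode, the equations reduce to a system of ODEs in $Y$ that is essentially decoupled from the pressure; here the source of difficulty is the degeneracy at $y=0$ coming from the no-slip condition together with $H_s'(0)=0$, and I expect to need the intrinsic weight $Z^{1/2}$ and the auxiliary functions alluded to in the abstract in order to obtain the $L^\infty$ and weighted $\dot{H}^1$ bounds appearing in the first line of the norm \eqref{1.1-3}. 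For the nonzero modes, the strategy is an energy method: test the perturbed momentum and induction equations in \eqref{1.2} against suitable multipliers (the solution itself, weighted versions $Z\widetilde{\FU}$ etc., and possibly a stream-function / vorticity reformulation) and exploit the structural cancellations between the $-H_s\pa_x\widetilde{\FH}$ term in the velocity equation and the $-H_s\pa_x\widetilde{\FU}$ term in the magnetic equation. The crucial algebraic point is that, after symmetrization, the bad transport terms combine into a quantity controlled by $G_s=H_s^2-U_s^2$, whose strict positivity \eqref{A4} provides the coercivity that replaces the monotonicity hypothesis needed for the Navier--Stokes case; this is the mechanism by which the magnetic field stabilizes the flow.

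Concretely, for each nonzero Fourier mode $n$ I would write the second-order elliptic system satisfied by $(\widetilde{\FU}_n,\widetilde{\FH}_n)$ after eliminating the pressure via the divergence-free condition (working with the stream functions $\psi_n,\phi_n$ defined by $\widetilde{\FU}_n=(\pa_y\psi_n,-i\tilde{n}\psi_n)$ and similarly for $\widetilde{\FH}_n$). Multiplying by $\bar\psi_n$ and integrating by parts, the viscous and resistive terms give $\vep(\mu,\ka)\|\nabla(\ldots)\|^2$, the transport terms involving $U_s$ produce commutator terms handled by \eqref{A3}, and the magnetic coupling terms assemble into $\int \tilde n^2 G_s (|\psi_n|^2+\ldots)$ plus lower-order pieces; combining with a weighted estimate (multiplier $Z$) upgrades the control near the boundary. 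Summing in $n$ with the $\vep$-weights dictated by \eqref{1.1-3} and Plancherel, and using that $\sum_n\|(\widetilde{\FU}_n,\widetilde{\FH}_n)\|_{L^\infty}$ is controlled by the one-dimensional Agmon/Gagliardo--Nirenberg inequality in $y$ together with summation in $n$, yields the linear a priori estimate $\|(\widetilde{\FU},\widetilde{\FH})\|_{\CX}\lesssim \vep^{-1/4}|\log\vep|^{(3+\eta)/2}(\cdots)$ of \eqref{est}. The logarithmic loss is expected to enter precisely when converting the weighted $L^2$ bounds on derivatives into the $L^\infty$-in-$y$, $\ell^1$-in-$n$ bound on $(\widetilde{\FU}_n,\widetilde{\FH}_n)$; tracking its optimal power is a delicate but routine bookkeeping step.

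Given the linear estimate, the nonlinear terms $-\widetilde{\FU}\cdot\nabla\widetilde{\FU}+\widetilde{\FH}\cdot\nabla\widetilde{\FH}$ and $-\widetilde{\FU}\cdot\nabla\widetilde{\FH}+\widetilde{\FH}\cdot\nabla\widetilde{\FU}$ are estimated in $L^2(\Omega)$ (and in the $Z^{1/2}$-weighted $L^2$) by product estimates in the space $\CX$: schematically $\|N(\widetilde{\FU},\widetilde{\FH})\|\lesssim \|(\widetilde{\FU},\widetilde{\FH})\|_{\CX}^2$, with the $\vep$-powers in the norm arranged exactly so that the quadratic map is a contraction on a ball of radius $\sim\vep^{-1/4}|\log\vep|^{(3+\eta)/2}\cdot(\text{size of }\Ff)$ once that radius is $o(1)$, which is guaranteed by the smallness hypothesis $\|(\Ff_\FU,\Ff_\FH)\|+\vep^{-1/4}\|Z^{1/2}(\Ff_\FU,\Ff_\FH)\|\le \delta_2\vep^{3/4}/|\log\vep|^{3+\eta}$. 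The smallness \eqref{1.1-2} of $\varrho(\b M+\bar M^4)$ is used to absorb the background-profile commutator terms and the non-coercive part of the transport operator into the good terms. The interior $H^2_{loc}$ regularity and $\nabla P\in L^2$ follow from standard elliptic regularity once the solution is constructed. I expect the main obstacle to be the boundary degeneracy in the zero-mode analysis: constructing the right weight $Z$ and the auxiliary unknowns so that the first-order terms with vanishing coefficients at $y=0$ do not destroy coercivity, and simultaneously keeping all $\vep$-powers consistent across the coupled estimates, is the technical heart of the argument.
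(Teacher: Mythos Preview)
Your high-level architecture (linear estimate via Fourier decomposition, then contraction mapping) matches the paper, but the proposal has a genuine gap at the heart of the linear analysis, and two smaller misidentifications.

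\textbf{The missing key step: good unknowns.} The stretching terms $\tilde v\,\pa_y U_s-\tilde g\,\pa_y H_s$ and $\tilde v\,\pa_y H_s-\tilde g\,\pa_y U_s$ in \eqref{1.2} are of size $O(\vep^{-1/2})$, since $\pa_y U_s=\vep^{-1/2}\pa_Y U_s$. A direct energy estimate on $(\widetilde{\FU}_n,\widetilde{\FH}_n)$ or on their stream functions, as you describe, cannot absorb these terms: they are not commutators controllable by \eqref{A3}, and no ``symmetrization'' makes them vanish. The paper's essential device is a nonlinear change of unknowns (denoted $\widehat{\FW}=(\hat u,\hat v,\hat h,\hat g)$, built from $a_p=U_s/H_s$, $b_p=\pa_Y H_s/H_s$, and the magnetic stream function) which \emph{algebraically cancels} these stretching terms. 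Only after passing to $\widehat{\FW}$ does the linearized operator have the structure $-i\tilde n G_s\widehat{\FH}_n+\cdots$ with bounded lower-order coefficients, so that the positivity of $G_s$ in \eqref{A4} actually yields coercivity. Your sketch implicitly assumes this reduction has already happened.

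\textbf{Two misidentifications.} First, the zero mode is not the hard part: once $n=0$, divergence-free plus boundary conditions force $v_0=g_0=0$, and $(u_0,h_0)$ are obtained by explicit double integration of the source; there is no degeneracy issue there. The boundary degeneracy you allude to (the bad boundary term $\hat h_n\pa_y\hat u_n|_{y=0}$ from the mixed boundary conditions) arises for $n\neq 0$ and is what forces the weighted vorticity estimate with the specially designed $Z$. Second, the $|\log\vep|^{(3+\eta)/2}$ loss does not come from the $L^2\to L^\infty$ conversion; it enters earlier, in controlling commutators $[Z,\pa_y]$ via a critical weighted Hardy inequality (the weight $Z^{1/2}|\log Z|^{1+}$ is exactly borderline), and then propagates through the interpolation that recovers the unweighted $L^2$ norm.
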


\begin{remark} Compared our work with the result in \cite{GM} for steady Navier-Stokes equations, there are three main differences.
	\begin{itemize}
		\item[(a)] The shear flow in \cite{GM} is monotonic near the boundary and remains positive for all $Y>0$. These assumptions are crucial for the stability since they prevent the reverse flow and boundary layer separation. While there is neither monotonicity nor positivity assumption on 
		the velocity field background in our result, instead the only structural condition we need is \eqref{A4}. 
		It reflects the stabilization effect of tangential magnetic field on the boundary layer.
		
		\item[(b)] Another essential  requirement for the stability result in \cite{GM} is the smallness condition on the periodicity $\varrho$ of the tangential variable, which
		means that the stability result is only local in space. However such smallness for 
		$\varrho$ is not necessary in our result. In some sense we have proven the almost global stability for $(\FU_{s},\FH_s)$. In fact, one can recover from 
		\eqref{1.1-2} that, the periodicity $\varrho$ can be arbitrarily large, provided that $\b{M}$, which measures the perturbations of the profile $(U_s,H_s)$ around its far field $(U_E,H_E)$, is suitably small.
		
		\item[(c)] Our analysis is quite different from \cite{GM}. As the authors \cite{GM} mentioned
		in their paper that  they are not able to get direct estimates of the perturbation $\widetilde{\FU}$, instead they
		construct the solution 
		{to} the problem of $\widetilde{\FU}$  via a complicated iteration process. However, by using a key transform inspired by \cite{LXY2} we can establish the estimates of $\widetilde{\FU}$ through a direct energy method. 
	\end{itemize}
\end{remark}

\begin{remark}
	We stress that the result in the present paper is a  generalization of \cite{LXY2} to the steady case. Unlike the unsteady case, for steady case it is difficult to establish the $L^2$ estimates for the equations \eqref{1.1}, which are degenerate on the boundary because of no-slip condition for velocity. It leads to some  essential difficulties in the mathematical analysis.   
\end{remark}

In what follows, we briefly  point out the difficulties and explain the main ingredients in our proof.
	\begin{itemize}
		\item[(a)] \underline{Good unknown functions}. First, to prove Theorem \ref{thm1}, the key step is to analyze the linear system \eqref{2.0}. Similar as the Navier-Stokes equations \cite{GM}, one of the difficulties in the analysis of \eqref{2.0} comes from the large stretching terms $v\pa_yU_s-g\pa_yH_s$ and $v\pa_yH_s-g\pa_yU_s$ which behave like $O(\vep^{-\f12})(v,g)$. As in \cite{LXY2}, our strategy to overcome this difficulty is to introduce  new unknowns $(\widehat{\FU},\widehat{\FH})=(\hat{u},\hat{v},\hat{h},\hat{g})$ that are defined in Section 3.2, in which the non-degeneracy of tangential magnetic field \eqref{A2} plays an important role. Notice that the transformation performed in the present paper is slightly different from that in \cite{LXY2}, since it keeps the divergence-free condition for both velocity field and magnetic field which is important for proof  in this paper. By reformulating \eqref{2.0} into a system for these new unknowns, the previously mentioned stretching terms are directly cancelled, see \eqref{pr-new}.  
		\item[(b)] \underline{$L^2$-coercivity}. The good unknown functions provide  an advantage  to obtain uniform-in-$\vep$ estimates of $\vep^{\frac{1}{2}}\|\nabla(\widehat{\FU},\widehat{\FH})\|_{L^2}$ via $\|(\widehat{\FU},\widehat{\FH})\|_{L^2}$, see Lemma \ref{prop2.3}. Then it remains  to establish the estimate of $\|(\widehat{\FU},\widehat{\FH})\|_{L^2}$ to make the process self-contained. 
		However, in contrast to the previous work \cite{LXY2} for the unsteady case, it is hard to obtain the $L^2$ estimate directly. Moreover, there is a difficulty  from  the degeneracy due to the no-slip boundary condition. Therefore, 
		another  key ingredient in the proof is to establish an $L^2$-coercivity estimate of linearized steady MHD operator around the boundary layer profile. To illustrate the main idea, let us consider the main part of \eqref{pr-new-n} for the $n$-th Fourier mode of  the good unknown function $(\widehat{\FU},\widehat{\FH})$: 
		$$\begin{aligned}
		-i\tn G_s(\frac{y}{\sqrt{\varepsilon}})~\widehat{\FH}_n+(i\tn p_n,\pa_yp_n)-\vep\mu(\pa_y^2-\tn^2)\widehat{\FU}_n&=\cdots,\\
		-i\tn \widehat{\FU}_n-\vep\ka(\pa_y^2-\tilde{n}^2)\widehat{\FH}_n&=\cdots,
		\end{aligned}
		$$
		where $G_s(Y)=H_s^2(Y)-U_s^2(Y)$. 
		Thanks to the non-degeneracy assumption \eqref{A4}, 
		$G_s$ has a strictly positive lower bound. A natural multiplier  is $(\widehat{\FH}_n,\widehat{\FU}_n)$
		to obtain  the estimate of $|\tn|^{\f12}\|(\widehat{\FU}_n,\widehat{\FH}_n)\|_{L^2}$. However, such a multiplier is not compatible with the diffusion terms of $\widehat{\FU}_n$ because the boundary term $\hat{h}_n\pa_y\hat{u}_n|_{y=0}$ appears due to the mixed boundary condition \eqref{1.1-1}.
		And this boundary term is clearly hard to control for this degenerate system.
		
		For this, we will establish a weighted estimate of the solution with an appropriate weight function $Z^{\frac{1}{2}}(y)$ which vanishes on the boundary, see Lemma \ref{prop2.5}. Then the interpolation inequality \eqref{AP1} allows us to obtain the estimate of $\|(\widehat{\FU},\widehat{\FH})\|_{L^2}$. In this process, since the un-weighted estimates and the weighted estimates are strongly coupled, we must keep track of the dependence of the constants on the frequency $n$, the length of torus $\varrho$ and $\bar{M}$ in each step. The smallness assumption in \eqref{1.1-2} is crucial for closing the estimate in $\CX$.
		\item[(c)] \underline{Choice of weight function}. The key issue in Lemma \ref{prop2.3} is to obtain
		 a gain of $\vep^{\f14}$ in the weighted estimate of magnetic field, which is crucial to recover the un-weighted $L^2$-estimate via the interpolation inequality \eqref{AP1}. Therefore, the hypothesis of $Z(y)\sim y$ near the boundary $\{y=0\}$ is natural, since multiplying terms involving $Y$-derivatives of boundary layer profile by the weight $Z^{\frac{1}{2}}(y)$ yields a gain of $\vep^{\f14}$. The main reason for the tricky construction of $Z(y)$ in Section 2 is as follows. We consider the vorticity formulation \eqref{2.11} (to avoid the commutator $[Z,\pa_yP_n]$). Thanks to the divergence-free condition for the good unknown function $(\hat{h},\hat{g})$, denote by $\hat{\psi}_n$ the $n$-th Fourier coefficient of the stream function $\hat{\psi}$ of $(\hat{h},\hat{g})$. Applying the multiplier $Z\hat{\psi}_n$ to the vorticity equation produces the following good terms:
		$$
		\begin{aligned}
		&\text{Im}\langle -i\tn [G_s(Y)\omega_{h,n}+\pa_yG_s(Y)\hat{h}_n],Z\hat{\psi}_n\rangle\\
		&=\int_0^\infty \tilde{n}G_s(Y)Z(y)|\widehat{\FH}_n|^2\dd y+\text{Im}\int_0^\infty i\tn\pa_yZ~ G_s(Y)\pa_y\hat{\psi}_n\overline{\hat{\psi}_n}\dd y\\
		&=\underbrace{\tn\int_0^\infty G_s(Y)Z(y)|\widehat{\FH}_n|^2\dd
			y}_{\CJ_1}+\underbrace{\frac{\tn}{2}\int_0^\infty\frac{-\pa_y[G_s(Y)\pa_yZ]}{2}|\hat{\psi}_n|^2\dd y}_{\CJ_2}.
		\end{aligned}
		$$
		Here $\displaystyle Y=\frac{y}{\sqrt{\varepsilon}}$, and $\CJ_1$ gives the desired weighted boundedness on $\widehat{\FH}_n$. So the function $Z(y)$ is designed so that the most singular part in the lower-order term $\CJ_2$ is cancelled. See Lemma \ref{lmz} for necessary details. Notice that such a process is not appropriate for the vorticity equation of magnetic field in \eqref{2.11}, simply because it is not strictly concave near the boundary. Fortunately, due to the boundary condition $\widetilde{\FU}|_{y=0}=\mathbf0$, the un-weighted norm $\|\widehat{\FU}_n\|_{L^2}$ can be obtained directly by applying the natural multiplier $\widehat{\FU}_n$ to the second equality in \eqref{pr-new-n}, see Lemma \ref{prop2.4} below.
		\item[(d)] \underline{Commutator estimates}.
		Note that the commutator $[\pa_y,Z]=\pa_yZ$ is not a boundary layer term, then the  gain of $\vep^{\f14}$ does not apply to it. Therefore,
		another key point in our weighted estimate is to control the lower order terms involving this commutator in a suitable way. To this end, we take as an example an inner product term $\langle R_n,\pa_yZ \pa_y^{-1}\hat{h}_n\rangle$ where $R_n$ is an inhomogeneous source. We observe that $\partial_yZ$ is supported on $[0,2]$, and the integral operator $\pa_y^{-1}$ gives an extra $Z(y)$ near the boundary, then it implies a trivial bound of this term by virtue of the Hardy inequality as
		\begin{align*}
		|\langle R_n,\pa_yZ \pa_y^{-1}\hat{h}_n\rangle|&=\left|\left\langle yR_n,\pa_yZ\frac{\pa_y^{-1}\hat{h}_n}{y}\right\rangle\right|\leq C\|ZR_n\|_{L^2}\|\hat{h}_n\|_{L^2}\\
		&\leq C\big(\vep^{-\f14}\|Z^{\f12}R_n\|_{L^2}\big)\cdot\big( \vep^{\f14}\|h_n\|_{L^2}\big),
		\end{align*}
		which will lead to a growth of $\vep^{-\f14}$ in our linear estimate \eqref{prop2-1}. If so, an extra $\vep^{\f12}$ on the perturbation of external force $(\Ff_{\FU},\Ff_{\FH})$ is required to compensate such a growth in the nonlinear analysis. In order to minimize the  negative power of $\vep$, our main idea is as follows. First, we use weighted Hardy inequality,  instead of the classical one, in the above treatment: 
		\begin{align*}
		|\langle R_n,\pa_yZ \pa_y^{-1}\hat{h}_n\rangle|&=\left|\left\langle \sqrt{y}R_n,\pa_yZ\frac{\pa_y^{-1}\hat{h}_n}{\sqrt{y}}\right\rangle\right|\leq C\big\|Z^{\frac{1}{2}}R_n\big\|_{L^2}\big\|Z^{\frac{1}{2}}|\log{Z} |^{1+}~\hat{h}_n\big\|_{L^2}.
		\end{align*}  
		 We emphasize that the logarithmic type weight in $\big\|Z^{\frac{1}{2}}|\log{Z} |^{1+}~\hat{h}_n\big\|_{L^2}$ is necessary since it is the critical case for Hardy inequality, see Lemma \ref{lmw1} for details. Second, we establish the control of $\big\|Z^{\frac{1}{2}}|\log{Z} |^{1+}~\hat{h}_n\big\|_{L^2}$ via $\big\|Z^{\frac{1}{2}}\hat{h}_n\big\|_{L^2}$ and $\varepsilon^{\frac{1}{4}}\big\|\hat{h}_n\big\|_{L^2}$ but with the price of a logarithmic singularity $|\log\varepsilon|^{1+}$, see Lemma \ref{lmw2}. 
		 Such singularity will cause a growth of $|\log\vep|^{\f32+}$ in the linear estimate \eqref{prop2-1}, and that is why we need the logarithmic coefficients in the main result. The above process is also applied to treat commutators in the weighted estimate of the vorticity. We refer to Lemma \ref{prop2.7} for details.	
		\end{itemize}
The rest of paper is organized as follows. In Section 2 we will introduce the function $Z(y)$ and establish some related interpolation inequalities. In Section 3, we will show the linear stability which is the key step of the proof. The nonlinear stability and the proof of Theorem \ref{thm1} will be given in Section 4. 

{\bf Notations.}
Throughout this paper, the positive constants which are independent of $\vep$ are denoted by $C$ and $c$. It may vary from line to line.  The constants $C_a,C_b,\cdots$ represent the generic positive constants depending on $a,~b,\cdots$, respectively. We say $A\sim B$ if there exist two positive constants $C_1$ and $C_2$, such that $C_1A\leq B\leq C_2A$, and $A\sim_\eta B$ if the constants $C_1$ and $C_2$ depend on $\eta$. $A\lesssim B$ means that there exists a positive constant $C$ such that $A\leq CB$, and $A\lesssim_\eta B$ means that the constant $C$ depends on $\eta$.  For any complex number $a$, we denote by $\b{a}$ its complex conjugate. For any two complex value functions $f$ and $g$ which depend on $y$, the notation $\langle\text{ },\text{ } \rangle$ represents the standard $L^2(\mathbb{R}_+)$ inner product, i.e., $\langle f,g \rangle=\int_{0}^\infty f\b{g}\dd y.$ Finally, we denote $\|\cdot\|_{L^p}$ as the standard $L^p(\mathbb{R}_+)$-norm and $\|\cdot\|_{L^p(\Omega)}$ as $L^p(\Omega)$-norm.

\section{Weight function }

In this section, we specify the weight function $Z^{\frac{1}{2}}(y)$ through the construction of $Z(y)$ and establish some related interpolation inequalities. 
Recall $G_s(Y)=H_s^2(Y)-U_s^2(Y)$ and it is easy to obtain from the assumptions \eqref{A3}-\eqref{A4} that, 
\begin{align*}
\gamma_0\leq G_s(Y)\leq \bar{\gamma}^2,\qquad \sup_{Y\geq 0}(1+Y)^3|G'_s(Y)|\lesssim\bar{M}.
\end{align*}
We construct a $C^1$-function $\widetilde{G}(y),~y\in\R_+$ satisfying
\begin{align}\label{def-tG}
\widetilde{G}(y):=\left\{\begin{aligned}
\frac{1}{G_s(y/\sqrt{\varepsilon})}, \quad &0\leq y\leq 1,\\
0,\quad &y\geq 2, 
\end{aligned}\right.
\end{align}
and
\begin{align}\label{def-tG1}
\frac{1}{2\bar{\gamma}^2}\leq\widetilde{G}(y)\leq\frac{2}{\gamma_0},\quad\left|\widetilde{G}'(y)\right|\lesssim \bar{M}\varepsilon,~\mbox{for}~y\in\left[1,\frac{3}{2}\right];\qquad \widetilde{G}'(y)\leq 0,~\mbox{for}~y\in\left[\frac{3}{2},2\right].
\end{align}
It is not difficult to know such function $\widetilde{G}(y)$ exists due to the fact
\begin{align*}
	\left.\left(\frac{1}{G_s(y/\sqrt{\varepsilon})}\right)'\right|_{y=1}=\left.\varepsilon\cdot\left(-\frac{Y^3G'_s(Y)}{G_s^2(Y)}\right)\right|_{Y=\frac{1}{\sqrt{\varepsilon}}}\lesssim\bar{M}\varepsilon.
\end{align*}
Then we define the function 
\begin{align}\label{z}
Z(y):=\int_0^y\widetilde{G}(y')dy'.
\end{align}
One can see that $Z\in C^2(\mathbb{R}_+)$.
In the following lemma, we give some basic properties of $Z(y)$, which will be frequently used later.
\begin{lemma}\label{lmz}
There exists a positive constant $C_0$ independent of $\vep$ and $\b{M}$, such that the following estimates hold for $Z(y)$:\\
\newline
 (1) 
 $0\leq Z(y)\leq C_0,~ Z'(y)\geq0$ and
	\begin{align}\label{z1}
	C_0^{-1}y\leq Z(y)\leq C_0 y\quad\mbox{for}~y\in[0,2],\qquad Z(y)\equiv\int_{0}^{2}\widetilde{G}(y')dy'\triangleq\bar{Z} \quad\mbox{for}~y\geq2. 
	\end{align}	
(2) 
\begin{align}\label{z2}
G_s(\frac{y}{\sqrt{\varepsilon}})Z'(y)\equiv 1\quad\mbox{for}~y\in[0,1],\qquad |y^kZ''(y)|\leq C_0\bar{M}\vep^{\frac{k-1}{2}}\quad \mbox{for}~y\in[0,\frac{3}{2}],~0\leq k\leq 3,
\end{align}
and
	\begin{align}\label{z4}
-\left(G_s(\frac{y}{\sqrt{\varepsilon}})Z'(y)\right)'\geq -C_0\b{M}\vep \quad \mbox{for}~y\geq 1;\qquad Z''(y)\leq 0,\quad \mbox{for}~y\geq\frac{3}{2}.
	\end{align}
(3) 
	\begin{align}\label{z5}
	|(1+y)Z'(y)|\leq C_0, \quad |yZ''(y)|\leq C_0(\b{M}+1) \quad \mbox{ for }y\in\mathbb{R}_+.
	\end{align}
\end{lemma}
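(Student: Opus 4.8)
The plan is to reduce everything to elementary region-by-region estimates for $\widetilde{G}$ and $\widetilde{G}'$, since $Z'=\widetilde{G}$ and $Z''=\widetilde{G}'$; one checks the four pieces $[0,1]$, $[1,\tfrac32]$, $[\tfrac32,2]$, $[2,\infty)$ using only the definition \eqref{def-tG}--\eqref{def-tG1} and the a priori bounds $\gamma_0\le G_s\le\bar\gamma^2$, $\sup_{Y\ge0}(1+Y)^3|G_s'(Y)|\lesssim\bar M$ (one may take $\vep\in(0,1)$). For part (1): since $\widetilde{G}\ge\tfrac1{2\bar\gamma^2}>0$ on $[0,\tfrac32]$ while $\widetilde{G}(2)=0$ with $\widetilde{G}'\le0$ on $[\tfrac32,2]$ (hence $\widetilde{G}\ge0$ there) and $\widetilde{G}\equiv0$ on $[2,\infty)$, we get $Z'=\widetilde{G}\ge0$, so $Z$ is nondecreasing with $Z(0)=0$ and $Z\equiv\int_0^2\widetilde{G}=:\bar Z$ on $[2,\infty)$; the two-sided bound $C_0^{-1}y\le Z(y)\le C_0 y$ on $[0,1]$ follows from $\tfrac1{\bar\gamma^2}\le\widetilde{G}=\tfrac1{G_s}\le\tfrac1{\gamma_0}$ by integration, and on $[1,2]$ one uses monotonicity ($Z(1)\ge\bar\gamma^{-2}$) for the lower bound and $\widetilde{G}\le\tfrac2{\gamma_0}$ (so $Z(y)\le Z(1)+\tfrac2{\gamma_0}(y-1)\le\tfrac3{\gamma_0}$) for the upper one. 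All constants here depend only on $\gamma_0,\bar\gamma$, so $C_0$ is indeed independent of $\vep$ and $\bar M$.

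For part (2): the identity $G_s(y/\sqrt{\vep})Z'(y)\equiv1$ on $[0,1]$ is immediate from $Z'=\widetilde{G}=1/G_s$ there. Differentiating, with $Y=y/\sqrt{\vep}$ and $y^k=\vep^{k/2}Y^k$,
\[
y^kZ''(y)=y^k\widetilde{G}'(y)=-\,\vep^{\frac{k-1}{2}}\,\frac{Y^kG_s'(Y)}{G_s^2(Y)}\qquad\text{on }[0,1],
\]
so $|y^kZ''(y)|\le\gamma_0^{-2}\vep^{\frac{k-1}{2}}\sup_Y(1+Y)^3|G_s'(Y)|\lesssim\bar M\vep^{\frac{k-1}{2}}$ for $0\le k\le3$ (using $Y^k\le(1+Y)^3$), while on $[1,\tfrac32]$ one simply uses $|Z''|=|\widetilde{G}'|\lesssim\bar M\vep\le\bar M\vep^{\frac{k-1}{2}}$ together with $y^k\le(\tfrac32)^k$. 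For the concavity-type bound, $G_sZ'=G_s\widetilde{G}$ vanishes for $y\ge2$, and for $y\in[1,2]$
\[
\big(G_s(y/\sqrt{\vep})Z'(y)\big)'=\frac1{\sqrt{\vep}}G_s'(Y)\widetilde{G}(y)+G_s(Y)\widetilde{G}'(y);
\]
the key point is that $y\ge1$ forces $Y\ge\vep^{-1/2}$, hence $(1+Y)^3\ge\vep^{-3/2}$ and $|G_s'(Y)|\lesssim\bar M\vep^{3/2}$, so $\tfrac1{\sqrt{\vep}}|G_s'(Y)|\widetilde{G}(y)\lesssim\bar M\vep$; on $[1,\tfrac32]$ one also has $|G_s(Y)\widetilde{G}'(y)|\le\bar\gamma^2|\widetilde{G}'|\lesssim\bar M\vep$, and on $[\tfrac32,2]$ the term $G_s\widetilde{G}'\le0$ only helps, so in all cases $(G_sZ')'\le C_0\bar M\vep$, i.e. $-(G_sZ')'\ge-C_0\bar M\vep$. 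Finally $Z''=\widetilde{G}'\le0$ on $[\tfrac32,2]$ by construction and $Z''=0$ on $[2,\infty)$.

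For part (3): $|(1+y)Z'(y)|=(1+y)\widetilde{G}(y)\le3\cdot\tfrac2{\gamma_0}$ on $[0,2]$ and $=0$ for $y\ge2$; and $|yZ''(y)|\le C_0\bar M$ on $[0,\tfrac32]$ by the $k=1$ case above, $|yZ''(y)|\le2|\widetilde{G}'(y)|\le C$ on $[\tfrac32,2]$ (the interpolation of $\widetilde{G}$ on $[\tfrac32,2]$ may be chosen with $|\widetilde{G}'|$ bounded by an absolute constant), and $=0$ for $y\ge2$, giving $|yZ''(y)|\le C_0(\bar M+1)$. Essentially all of this is bookkeeping with the explicit formula for $\widetilde{G}$ on $[0,1]$ and the scaling $y^k=\vep^{k/2}Y^k$; the one step that needs a little care is the bound $-(G_s(y/\sqrt{\vep})Z'(y))'\ge-C_0\bar M\vep$ on $[\tfrac32,2]$, where $\widetilde{G}'$ carries no smallness in itself and one must exploit \emph{simultaneously} its sign ($\widetilde{G}'\le0$) and the largeness $Y\gtrsim\vep^{-1/2}$, which is precisely what activates the decay hypothesis $(1+Y)^3|G_s'(Y)|\lesssim\bar M$.
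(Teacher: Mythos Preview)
Your proof is correct and follows essentially the same approach as the paper: a region-by-region analysis using $Z'=\widetilde{G}$, $Z''=\widetilde{G}'$, the explicit formula $\widetilde{G}=1/G_s$ on $[0,1]$, the scaling $y^k=\vep^{k/2}Y^k$, and the decay $(1+Y)^3|G_s'(Y)|\lesssim\bar M$ combined with $Y\ge\vep^{-1/2}$ when $y\ge1$. The paper's own proof is much terser---it only spells out \eqref{z4}, noting in particular the sign $G_s\widetilde{G}'\le0$ on $[\tfrac32,2]$ and the bound $|G_s'(Y)\widetilde{G}(y)|\le C_0\bar M\vep^{3/2}y^{-3}$---and declares the rest straightforward; your write-up simply fills in those omitted computations with the same ideas.
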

\begin{proof}
	It suffices to show \eqref{z4} and \eqref{z5}, since the other estimates are straightforward. As $Z'(y)=\widetilde{G}(y), Z''(y)=\widetilde{G}'(y)$, from  \eqref{def-tG1} for $\widetilde{G}$ it is easy to get $Z''(y)\leq0$ for $y\geq\frac{3}{2}.$ Then, we have that for $y\geq1$,
	$$-\left(G_s(\frac{y}{\sqrt{\varepsilon}})Z'(y)\right)'=-G_s(\frac{y}{\sqrt{\varepsilon}})\widetilde{G}'(y)-\varepsilon^{-\frac{1}{2}}G'_s(\frac{y}{\sqrt{\varepsilon}})\widetilde{G}(y).
	$$
	By using \eqref{def-tG1}, it implies  
	\begin{align*}
			\left|G_s(\frac{y}{\sqrt{\varepsilon}})\widetilde{G}'(y)\right|\leq C_0\varepsilon,\quad \mbox{for}~1\leq y\leq\frac{3}{2};\qquad G_s(\frac{y}{\sqrt{\varepsilon}})\widetilde{G}'(y)\leq 0,\quad \mbox{for}~y\geq\frac{3}{2},
	\end{align*}
	and
	$$
	\left|G'_s(\frac{y}{\sqrt{\varepsilon}})\widetilde{G}(y)\right|\leq C_0\varepsilon^{\frac{3}{2}}\frac{|Y^3G'_s(Y)|}{y^3}\leq C_0\bar{M}\varepsilon^{\frac{3}{2}} y^{-3}
	$$
	with $Y=y/\sqrt{\varepsilon}$. Therefore, \eqref{z4} follows immediately. The proof of \eqref{z5} is similar and we omit it for brevity. This completes the proof of the lemma. 
\end{proof}
Next we establish an interpolation inequality which is analogous to Proposition 2.4 in \cite{GM}. 
\begin{lemma}\label{lmz2}
	Let $Z(y)$ be the weight function defined in \eqref{z} and $C_0$ be the positive constant given in Lemma \ref{lmz}. 
	It holds that for any $g\in H^{1}(\mathbb{R}_+)$,
	\begin{align}\label{AP1}
	\|g\|_{L^2}\leq 2\sqrt{2C_0}\|Z^{\f12}g\|_{L^{2}}^{\f23}\|\pa_yg\|_{L^2}^{\f13}+C_0\|Z^{\f12}g\|_{L^2}.
	\end{align}
\end{lemma}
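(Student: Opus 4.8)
The plan is to prove the interpolation inequality \eqref{AP1} by a direct calculation, using the structure of $Z$ recorded in Lemma \ref{lmz}: $Z$ is nondecreasing, $Z(y)\sim y$ on $[0,2]$ and $Z\equiv\bar Z$ for $y\ge 2$, so away from the origin the weight $Z^{1/2}$ is comparable to a positive constant and only near $y=0$ does it degenerate like $\sqrt{y}$. First I would split the analysis: for $y\ge 2$ one has $Z(y)=\bar Z\ge C_0^{-1}$, so trivially $\|g\|_{L^2(2,\infty)}\le \sqrt{C_0}\,\|Z^{1/2}g\|_{L^2(2,\infty)}$, which is absorbed into the last term $C_0\|Z^{1/2}g\|_{L^2}$. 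So the real content is the estimate on the interval where $Z$ degenerates, and the standard trick (as in Proposition 2.4 of \cite{GM}) is to integrate by parts introducing a factor built out of $y$ or $Z$.

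The key computation: write $\int_0^\infty |g|^2\,\dd y$ and introduce the primitive of the weight. Since $Z'(y)=\widetilde G(y)$ with $\widetilde G$ bounded below by $1/(2\bar\gamma^2)$ on a neighbourhood of $0$ and $Z(y)\le C_0 y$ there, one has $y\lesssim Z(y)/Z'(y)$ near the origin; more robustly, I would use that $\bigl(\text{antiderivative of } 1\bigr)=y$ together with $y\le C_0 Z(y)$ on $[0,2]$. Concretely, on a fixed bounded interval $[0,2]$ write $\int_0^2|g|^2\dd y = \bigl[y|g|^2\bigr]_0^2 - \int_0^2 y\,\partial_y(|g|^2)\,\dd y$; the boundary term at $y=2$ is controlled by $\|Z^{1/2}g\|_{L^2}^2$ (since $Z(2)\sim 1$ and $|g(2)|^2\lesssim \int_1^2(|g|^2+|\partial_y g|^2)$, or more simply by a Sobolev trace bound reduced again to the two pieces on the right), and the integral term is bounded by $2\int_0^2 y|g||\partial_y g|\,\dd y\le 2C_0\int_0^2 Z|g||\partial_y g|\,\dd y$. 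Then Cauchy–Schwarz gives $\int_0^2 Z|g||\partial_y g|\le \|Z^{1/2}g\|_{L^2}\|Z^{1/2}\partial_y g\|_{L^2}\le \sqrt{C_0}\,\|Z^{1/2}g\|_{L^2}\|\partial_y g\|_{L^2}$, using $Z\le C_0$. This already yields $\|g\|_{L^2}^2\lesssim \|Z^{1/2}g\|_{L^2}\|\partial_y g\|_{L^2}+\|Z^{1/2}g\|_{L^2}^2$, which is of the right shape but with exponents $1/2,1/2$ rather than $2/3,1/3$.

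To upgrade to the stated powers $\|Z^{1/2}g\|^{2/3}\|\partial_y g\|^{1/3}$ I would instead localize to a small interval $[0,\ell]$ whose length $\ell$ is chosen optimally and treat $[\ell,\infty)$ via $Z(y)\gtrsim Z(\ell)\sim \ell$. That is: $\int_\ell^\infty |g|^2\le \frac{C_0}{\ell}\int_\ell^\infty Z|g|^2\le \frac{C_0^2}{\ell}\|Z^{1/2}g\|_{L^2}^2$ wait — more carefully, on $[\ell,2]$ one has $Z(y)\ge C_0^{-1}y\ge C_0^{-1}\ell$ so $|g|^2\le C_0\ell^{-1}Z|g|^2$, and on $[2,\infty)$, $Z\equiv\bar Z\ge C_0^{-1}\cdot 2\ge C_0^{-1}\ell$ as long as $\ell\le 2$, giving the same bound; while on $[0,\ell]$ the integration-by-parts argument above (now on $[0,\ell]$, with the boundary term at $\ell$ being $\ell|g(\ell)|^2\lesssim \ell\cdot\frac{1}{\ell}\int_0^\ell|g|^2 + \ell\int_0^\ell|g||\partial_y g|$, handled by absorption) gives $\int_0^\ell |g|^2\lesssim C_0\ell\|g\|_{L^2}\|\partial_y g\|_{L^2}$. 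Combining, $\|g\|_{L^2}^2\le C_0^2\ell^{-1}\|Z^{1/2}g\|_{L^2}^2 + C\,C_0\,\ell\,\|g\|_{L^2}\|\partial_y g\|_{L^2}$; absorbing the last term's $\|g\|_{L^2}$ partially and then optimizing in $\ell$ (choose $\ell\sim \|Z^{1/2}g\|_{L^2}^{2/3}\|\partial_y g\|_{L^2}^{-2/3}\|g\|_{L^2}^{-\cdots}$, or more cleanly treat it as balancing the two terms after dividing by $\|g\|_{L^2}$) produces exactly $\|g\|_{L^2}\lesssim \|Z^{1/2}g\|_{L^2}^{2/3}\|\partial_y g\|_{L^2}^{1/3}$, plus the harmless $C_0\|Z^{1/2}g\|_{L^2}$ correction needed when $\ell$ would exceed $2$ (i.e., when $\|\partial_y g\|$ is small relative to $\|Z^{1/2}g\|$).

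The main obstacle is bookkeeping the constants so that the final bound has precisely the constant $2\sqrt{2C_0}$ and the additive tail $C_0\|Z^{1/2}g\|_{L^2}$: this forces a slightly delicate choice of the cutoff length $\ell$ (capped at $2$) and careful use of $Z\le C_0$, $C_0^{-1}y\le Z(y)\le C_0 y$ on $[0,2]$, and $Z\equiv\bar Z$ beyond, rather than any new analytic idea. The argument is otherwise entirely elementary — one integration by parts, Cauchy–Schwarz, and an optimization — and the only regularity needed is $g\in H^1(\mathbb R_+)$, which makes the boundary terms and the density argument standard.
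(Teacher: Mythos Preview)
Your overall strategy is exactly the paper's: split $\|g\|_{L^2}^2$ at a cut-off $\ell\in(0,2]$, use $Z(y)\ge C_0^{-1}\ell$ on $[\ell,\infty)$ to get $\int_\ell^\infty|g|^2\le C_0\ell^{-1}\|Z^{1/2}g\|_{L^2}^2$, bound the piece on $[0,\ell]$ by something of the form $\ell\|g\|_{L^2}\|\partial_y g\|_{L^2}$, and then optimize in $\ell$ (with the additive term $C_0\|Z^{1/2}g\|_{L^2}$ appearing when the optimal $\ell$ exceeds $2$). The optimization and the case split are handled just as in the paper.

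There is, however, a genuine slip in your treatment of the near-origin piece. Your integration-by-parts argument gives
\[
\int_0^\ell |g|^2\,dy \;\le\; \ell|g(\ell)|^2 + 2\ell\int_0^\ell |g||\partial_y g|\,dy,
\]
and your proposed bound for the boundary term, $\ell|g(\ell)|^2\lesssim \int_0^\ell|g|^2+\ell\int_0^\ell|g||\partial_y g|$, is correct but has coefficient exactly $1$ on $\int_0^\ell|g|^2$ (average the fundamental-theorem identity over $[0,\ell]$ and you see this). So the ``absorption'' is circular: substituting gives $\int_0^\ell|g|^2\le\int_0^\ell|g|^2+\text{(good terms)}$, which is useless. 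The paper avoids this detour entirely by the one-line bound
\[
\int_0^\ell |g|^2\,dy \;\le\; \ell\,\|g\|_{L^\infty}^2 \;\le\; 2\ell\,\|g\|_{L^2}\|\partial_y g\|_{L^2},
\]
using the elementary interpolation $\|g\|_{L^\infty}^2\le 2\|g\|_{L^2}\|\partial_y g\|_{L^2}$. This feeds directly into the optimization and also delivers the sharp constant $2\sqrt{2C_0}$ without any bookkeeping. Replace your integration-by-parts step with this and your proof is complete and identical to the paper's.
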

\begin{proof} 
	Since $Z(y)\sim y$ for $y\in [0,2]$ and $Z(y)\equiv \bar{Z}$ for $y\geq 2$, the inequality \eqref{AP1} follows from a similar argument as that in \cite{GHM,GM}. Nevertheless, we give its proof for completeness. Let $0<\eta\leq 2$ be a constant which will be chosen later. Then from \eqref{z1} one has
	\begin{align}\label{inter}\begin{aligned}
	\|g\|_{L^2}^2&=\int_0^\eta |g(y)|^2d y+\int_\eta^\infty |g(y)|^2d y\leq \eta\|g\|_{L^\infty}^2+\int_\eta^\infty\frac{1}{Z(y)} Z(y)|g(y)|^2d y\\
	&\leq 2\eta\|g\|_{L^2}\|\pa_yg\|_{L^2}+C_0\eta^{-1}\|Z^{\f12}g\|_{L^2}^2, 
	\end{aligned}\end{align}
	where we have used the fact 
	$$Z(y)\geq Z(\eta)\geq C_0^{-1}\eta \quad \mbox{for}~y\geq\eta,$$
	and the classical interpolation inequality $\|g\|_{L^\infty}^2\leq 2\|g\|_{L^2}\|\pa_yg\|_{L^2}.$
	Then we optimize the right-hand side of \eqref{inter} with respect to $\eta\in(0,2]$. On one hand, when $\displaystyle\frac{\sqrt{C_0}\|Z^{\f12}g\|_{L^2}}{\sqrt{2\|g\|_{L^2}\|\pa_yg\|_{L^2}}}\leq2,$ we choose $\displaystyle\eta=\frac{\sqrt{C_0}\|Z^{\f12}g\|_{L^2}}{\sqrt{2\|g\|_{L^2}\|\pa_yg\|_{L^2}}}$ in \eqref{inter} to obtain
	\begin{align*}
		\|g\|_{L^2}^2\leq 2\sqrt{2C_0} \|Z^{\f12}g\|_{L^2}\sqrt{\|g\|_{L^2}\|\pa_yg\|_{L^2}},
	\end{align*}
	which implies
	\begin{align}\label{inter1}
\|g\|_{L^2}\leq 2\sqrt{2C_0} \|Z^{\f12}g\|_{L^2}^{\frac{2}{3}}\|\pa_yg\|_{L^2}^{\frac{1}{3}}.
\end{align}
On the other hand,  when $\displaystyle\frac{\sqrt{C_0}\|Z^{\f12}g\|_{L^2}}{\sqrt{2\|g\|_{L^2}\|\pa_yg\|_{L^2}}}>2,$ it implies $\displaystyle\|g\|_{L^2}\|\pa_yg\|_{L^2}<\frac{C_0}{8}\|Z^{\f12}g\|_{L^2}^2.$ We apply this inequality to \eqref{inter} and let $\eta=2$ to get
\begin{align}\label{inter2}
\|g\|_{L^2}^2<C_0 \|Z^{\f12}g\|_{L^2}^{2}.
\end{align}
Combining \eqref{inter1} with \eqref{inter2} yields the desired estimate \eqref{AP1}.
\end{proof}

The following two lemmas are crucial for controlling some lower-order terms involving the commutator $[Z,\pa_y]$.

\begin{lemma}\label{lmw1}
	Let $R(y)$ be a $L^2$-function supported on $[0,2]$. Then for any $\eta>0$, there exists a positive constant $C_\eta$ such that
	\begin{align}\label{w1}
	\left|\langle R,\pa_y^{-1}h\rangle\right|
	\leq C_\eta\|Z^{\f12}R\|_{L^2}\|Z^{\f12}|\log Z|^{1+\f{\eta}{3}}h\|_{L^2}.
	\end{align}
\end{lemma}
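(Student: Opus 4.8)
The plan is to write $\pa_y^{-1}h(y)=\int_0^y h(y')\,\dd y'$ and exploit the fact that $R$ is supported on $[0,2]$, where $Z(y)\sim y$ by \eqref{z1}, so that the statement is essentially a weighted Hardy inequality on a finite interval with a logarithmic correction at the critical exponent. I would first split off the trivial tail: since $\mathrm{supp}\,R\subseteq[0,2]$ and $Z$ is bounded there, it suffices to bound $\int_0^2 |R(y)|\,|\pa_y^{-1}h(y)|\,\dd y$. Pair $R$ against $\pa_y^{-1}h$ with the weight distributed as $Z^{1/2}R \cdot Z^{-1/2}\pa_y^{-1}h$ and apply Cauchy--Schwarz, reducing \eqref{w1} to the one-sided weighted Hardy-type estimate
\begin{align*}
\left\|\frac{\pa_y^{-1}h}{Z^{1/2}}\right\|_{L^2(0,2)}\leq C_\eta \big\|Z^{1/2}|\log Z|^{1+\frac{\eta}{3}}h\big\|_{L^2(0,2)}.
\end{align*}

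Because $Z(y)\sim y$ on $[0,2]$, the weights $Z^{1/2}$, $|\log Z|$ are comparable to $y^{1/2}$, $|\log y|$ respectively (with a harmless constant once one rescales so that $Z(2)<1$, or by absorbing the contribution near $y=2$ where everything is bounded), so the estimate follows from the classical one-dimensional fact that $\big\|y^{-1/2}\int_0^y h\big\|_{L^2(0,a)}\lesssim \big\|y^{1/2}|\log y|^{1+\delta}h\big\|_{L^2(0,a)}$ for any $\delta>0$ and small $a$. I would prove this latter inequality directly: write $\int_0^y h(y')\,\dd y' = \int_0^y \big(y'^{1/2}|\log y'|^{1+\delta} h(y')\big)\cdot\big(y'^{-1/2}|\log y'|^{-1-\delta}\big)\,\dd y'$, apply Cauchy--Schwarz in $y'$ to get $|\pa_y^{-1}h(y)|^2\le \big(\int_0^y y'\,|\log y'|^{2+2\delta}|h|^2\,\dd y'\big)\cdot\big(\int_0^y y'^{-1}|\log y'|^{-2-2\delta}\,\dd y'\big)$, note the second factor converges and equals $c\,|\log y|^{-1-2\delta}$ (this is exactly where $\delta>0$, i.e.\ the logarithmic gain over the critical Hardy exponent, is used), then multiply by $y^{-1}$, integrate in $y$ over $(0,a)$, and apply Fubini to the resulting double integral; the $y$-integral $\int_{y'}^{a} y^{-1}|\log y|^{-1-2\delta}\,\dd y$ again converges to $c\,|\log y'|^{-2\delta}$, which when combined with the $|\log y'|^{2+2\delta}$ from the inner weight leaves $|\log y'|^{2}$...

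Let me re-examine the bookkeeping: with $\delta=\frac{\eta}{3}$ one needs the surplus logarithmic powers to balance so that the right-hand side is exactly $\big\|Z^{1/2}|\log Z|^{1+\eta/3}h\big\|_{L^2}$; the computation above shows the double integral is bounded by $\int_0^a y'\,|\log y'|^{2+2\delta}\,|h(y')|^2 \cdot |\log y'|^{-2\delta}\,\dd y' = \int_0^a y'\,|\log y'|^{2}|h|^2\,\dd y'$, which is the square of $\|Z^{1/2}|\log Z|\, h\|_{L^2}$ — one factor of $|\log y'|$ short of what the two Cauchy--Schwarz steps with exponent $1+\delta$ should give; the point is that one Cauchy--Schwarz uses exponent $1+\delta$ and the final Fubini estimate recovers only $2\delta$ of the two lost log-powers, so to be safe I would simply carry exponent $1+\frac{\eta}{3}$ throughout both Cauchy--Schwarz applications and check that the convergent auxiliary integrals $\int_0^y y'^{-1}|\log y'|^{-2-\frac{2\eta}{3}}\dd y'\sim|\log y|^{-1-\frac{2\eta}{3}}$ and $\int_{y'}^a y^{-1}|\log y|^{-1-\frac{2\eta}{3}}\dd y\sim |\log y'|^{-\frac{2\eta}{3}}$ combine to give a net surplus, so that the final weight is at worst $|\log y'|^{2+\frac{2\eta}{3}}$, i.e.\ the claimed $1+\frac{\eta}{3}$ power in $L^2$. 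The main obstacle — really the only subtle point — is this logarithmic bookkeeping at the borderline Hardy exponent: one must verify that the power $1+\frac{\eta}{3}$ in the weight on $h$ is precisely what makes every auxiliary single-variable integral converge while still closing the estimate, and track how the constant $C_\eta$ blows up as $\eta\to 0^+$ (it behaves like a negative power of $\eta$ coming from $\int_0^{\cdot} y'^{-1}|\log y'|^{-1-\frac{2\eta}{3}}\dd y'\sim \frac{3}{2\eta}|\log\,\cdot\,|^{-\frac{2\eta}{3}}$). Everything else — the cutoff to $[0,2]$, the equivalence $Z\sim y$, the Cauchy--Schwarz splittings — is routine.
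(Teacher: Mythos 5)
Your proposal is correct and follows essentially the same route as the paper's proof: Cauchy--Schwarz with the weight split $Z^{1/2}R\cdot Z^{-1/2}\partial_y^{-1}h$, followed by a direct proof of the critical-exponent weighted Hardy estimate via a pointwise Cauchy--Schwarz against $\xi^{-1/2}|\log\xi|^{-1-\eta/3}$ and convergent logarithmic integrals (the paper also notes this is an instance of the general weighted Hardy inequality with $u=Z^{-1}$, $v=Z|\log Z|^{2+2\eta/3}$). Your mid-proof worry about being ``one log short'' comes from the incorrect claim that $\int_{y'}^{a}y^{-1}|\log y|^{-1-2\delta}\,\dd y\sim|\log y'|^{-2\delta}$ --- this integral is in fact bounded by the constant $\tfrac{1}{2\delta}|\log a|^{-2\delta}$, so the Fubini bookkeeping closes with exactly the weight $|\log y'|^{2+2\eta/3}$ on $h$ and your final conclusion stands.
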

\begin{proof}
By Cauchy-Schwarz inequality,
\begin{align}\label{w1-1}
\left|\langle R,\pa_y^{-1}h\rangle\right|
&\leq \|Z^{\f12}R\|_{L^2(0,2)}\|Z^{-\f12}\pa_y^{-1}h\|_{L^2(0,2)}.
\end{align} 
Recall the weighted Hardy inequality (\cite{KMP}):
\begin{align}\label{w-Hardy}
\left\|u^{\f1p}\partial_y^{-1}h\right\|_{L^p}\lesssim \left\|v^{\f1q}h\right\|_{L^q},\quad 1\leq p\leq q<\infty
\end{align}
provided that the weight functions $u(y)$ and $v(y)$ satisfy 
\begin{align*}
\sup_{y}\left(\int_{\{z\geq y\}}u(z)dz\right)^{\frac{1}{q}}\left(\int_{\{z\leq y\}}v(z)^{1-p'}dz\right)^{\frac{1}{p'}}<\infty,\quad \frac{1}{p}+\frac{1}{p'}=1.
\end{align*}
Then, it follows that by letting $p=q=2,~u(y)=Z^{-1}(y), ~v(y)=Z(y)\big|\log Z(y)\big|^{2+}$ in \eqref{w-Hardy}, 
\begin{align}\label{w1-2}
\|Z^{-\f12}\pa_y^{-1}h\|_{L^2(0,2)}\lesssim \|Z^{\f12}|\log Z|^{1+\f{\eta}{3}}h\|_{L^2(0,2)},
\end{align} 
which, along with \eqref{w1-1}, implies \eqref{w1} immediately.

For easy reference, we give an intuitive proof of \eqref{w1-2} instead of using weighted Hardy inequality \eqref{w-Hardy}. Note that \eqref{w1-2} holds automatically when $y$ is away from zero, since $Z(y)$ is bounded from below by a positive constant. Hence we only need to focus on the case of $y$ near zero. To this end,	
as $Z(y)\sim y$ with $y\in (0,1/2)$, for any $\eta>0$ we have the following pointwise estimate 
$$
\begin{aligned}
|\pa_y^{-1}h(y)|&\leq C\int_0^y \xi^{-\f12}|\log\xi|^{-(1+\f\eta3)}Z^{\f12}|\log Z|^{1+\f\eta3}|h(\xi)|d\xi\\
&\leq C\|Z^{\f12}|\log Z|^{1+\f{\eta}{3}}h\|_{L^2(0,2)}\cdot\left(\int_0^y\xi^{-1}|\log\xi|^{-2-\f{2\eta}{3}}d\xi\right)^{\f12}\\
&\leq C_\eta|\log y|^{-\frac{1}{2}-\f\eta3}\cdot\|Z^{\f12}|\log Z|^{1+\f{\eta}{3}}h\|_{L^2(0,2)}.
\end{aligned}
$$
Then, 
$$
\begin{aligned}
\left\|Z^{-\f12}\pa_y^{-1}h\right\|_{L^2(0,\f12)}
&\leq C~\left(\int_0^{\f12}y^{-1}|\pa_y^{-1}h(y)|^2dy\right)^{\f12}\\
&\leq C_\eta\|Z^{\f12}|\log Z|^{1+\f{\eta}{3}}h\|_{L^2(0,2)}\cdot\left(\int_0^2y^{-1}|\log y|^{-1-\frac{2\eta}{3}}dy\right)^{\f12}\\
&\leq C_\eta \|Z^{\f12}|\log Z|^{1+\f{\eta}{3}}h\|_{L^2(0,2)}.
\end{aligned}
$$
Thus we obtain  \eqref{w1-2}.
\end{proof}

\begin{lemma}\label{lmw2}
	For any $\eta>0$ and $\delta\geq0$, there exists a positive constant $C_{\eta,\delta}$ independent of $\vep$, such that
	\begin{align}\label{w2}
		\|Z^{\f12}|\log Z|^{1+\f{\eta}{3}}h\|_{L^2}\leq C_{\eta,\delta}|\log\vep|^{1+\frac{\eta}{3}}\left(\|Z^{\f12}h\|_{L^2}+\vep^{\f14+\delta}\|h\|_{L^2}\right).
	\end{align}
\end{lemma}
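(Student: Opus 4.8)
The plan is to localise the logarithmic factor, which by \eqref{z1} is only large near $y=0$ (where $Z(y)\sim y$), and to split $\mathbb{R}_+=[0,y_*]\cup[y_*,\infty)$ at the threshold $y_*:=\vep^{\alpha}$ with $\alpha:=\frac12+2\delta$. This choice is forced by the bookkeeping: the inner piece will eventually carry the factor $\sqrt{y_*}=\vep^{\frac14+\delta}$. Fix $\vep_0=\vep_0(\eta,\delta)$ small enough that $C_0\vep^{\alpha}<e^{-(2+\frac{2\eta}{3})}$ (so in particular $y_*<\frac12$); then Lemma \ref{lmz} gives $C_0^{-1}y\leq Z(y)\leq C_0 y$ on $[0,2]$, and the function $\Phi(s):=s\,|\log s|^{2+\frac{2\eta}{3}}$ is increasing on $(0,C_0\vep^{\alpha}]$.

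On the outer region I would use that $Z$ stays bounded away from $0$: for $y\geq y_*$ monotonicity of $Z$ and \eqref{z1} give $C_0^{-1}\vep^{\alpha}\leq Z(y)\leq C_0$, whence $|\log Z(y)|\leq C_\delta|\log\vep|$ for $\vep<\vep_0$, and therefore
\[
\int_{y_*}^{\infty}Z\,|\log Z|^{2+\frac{2\eta}{3}}\,|h|^2\,dy\leq C_{\eta,\delta}|\log\vep|^{2+\frac{2\eta}{3}}\int_{y_*}^{\infty}Z\,|h|^2\,dy\leq C_{\eta,\delta}|\log\vep|^{2+\frac{2\eta}{3}}\|Z^{\frac12}h\|_{L^2}^2 .
\]
On the inner region $[0,y_*]$, monotonicity of $\Phi$ together with $Z(y)\leq C_0y\leq C_0\vep^{\alpha}$ gives the pointwise bound $Z(y)\,|\log Z(y)|^{2+\frac{2\eta}{3}}\leq\Phi(C_0\vep^{\alpha})\leq C_{\eta,\delta}\,\vep^{\alpha}|\log\vep|^{2+\frac{2\eta}{3}}$, hence
\[
\int_{0}^{y_*}Z\,|\log Z|^{2+\frac{2\eta}{3}}\,|h|^2\,dy\leq C_{\eta,\delta}\,\vep^{\alpha}|\log\vep|^{2+\frac{2\eta}{3}}\|h\|_{L^2}^2=C_{\eta,\delta}\,\vep^{\frac12+2\delta}|\log\vep|^{2+\frac{2\eta}{3}}\|h\|_{L^2}^2 .
\]
Summing the two contributions and taking square roots yields precisely \eqref{w2}.

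I do not expect a real obstacle: the statement is an elementary localisation estimate. The only point needing care is the calibration of the cut-off $y_*=\vep^{\frac12+2\delta}$, so that $\sqrt{y_*}$ matches the prescribed power $\vep^{\frac14+\delta}$, and the verification at that cut-off that $|\log Z|\lesssim|\log\vep|$ on the outer piece while $\sup_{(0,C_0\vep^{\alpha}]}\Phi\lesssim\vep^{\alpha}|\log\vep|^{2+\frac{2\eta}{3}}$ on the inner piece — both of which rely on the comparison $Z(y)\sim y$ near the origin furnished by Lemma \ref{lmz}, and on taking $\vep_0$ sufficiently small.
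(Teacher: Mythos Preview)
Your proof is correct and follows essentially the same approach as the paper: both split the integral at the threshold $y_*=\vep^{\frac12+2\delta}$, bound $|\log Z|$ by $C|\log\vep|$ on the outer region via the lower bound $Z(y)\gtrsim\vep^{\frac12+2\delta}$, and on the inner region use the monotonicity of $s\mapsto s|\log s|^{2+\frac{2\eta}{3}}$ (the paper writes this as $\xi(y)=y|\log y|^{2+\frac{2\eta}{3}}$ after invoking $Z\sim y$, while you apply it directly to $Z$). The arguments are interchangeable and equally elementary.
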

\begin{proof}
We divide the integration  interval into $[0,\vep^{\f12+2\delta}]$ and $[\vep^{\f12+2\delta},\infty)$. In the interval $[0,\vep^{\f12+2\delta}]$, it holds that $Z(y)\sim y$. Let $\xi(y):=y|\log y|^{2+\f{2\eta}{3}}$, then
$$\xi'(y)=|\log y|^{1+\f{2\eta}{3}}\left[|\log y|-2(1+\f\eta3)\right]>0.
$$
Consequently, it holds $|\xi(y)|\leq C\vep^{\f12+2\delta}|\log \vep|^{2+\f{2\eta}{3}}$, which implies that
$$\int_0^{\vep^{\f12+2\delta}}Z|\log Z|^{2+\f{2\eta}{3}}|h|^2d y\leq C\int_0^{\vep^{\f12+2\delta}}|\xi||h|^2d y\leq C\vep^{\f12+2\delta}|\log \vep|^{2+\f{2\eta}{3}}\|h\|_{L^2}^2.
$$
In the interval $[\vep^{\f12+2\delta},\infty)$, since $Z(y)$ is bounded, it yields $|\log Z|\leq C|\log \vep|$. Then it holds that
$$\int_{\vep^{\f12+2\delta}}^\infty Z|\log Z|^{2+\f{2\eta}{3}}|h|^2d y\leq C|\log \vep|^{2+\f{2\eta}{3}}\|Z^{\f12}h\|_{L^2}^2.
$$
By combining these two inequalities, we obtain \eqref{w2} and the proof of the lemma is completed.
\end{proof}
Combining \eqref{w1} with \eqref{w2} yields that for any $\eta>0,\delta\geq0$ and $L^2$ function $R$ supported on $[0,2]$,
\begin{align}\label{w1_1}
		\left|\langle R,\pa_y^{-1}h\rangle\right|
	\leq C_{\eta,\delta} |\log\vep|^{1+\frac{\eta}{3}}\|Z^{\f12}R\|_{L^2}\left(\|Z^{\f12}h\|_{L^2}+\vep^{\f14+\delta}\|h\|_{L^2}\right).
\end{align}

We now conclude this section with the following lemma about the equivalence between the weighted estimates on the full gradient of divergence-free vector field and the weighted estimates of its vorticity.
\begin{lemma}\label{lmw3}
	Let $\Fq=(q_1,q_2)$ be a divergence-free vector field in $\Omega$ satisfying $q_2|_{y=0}=0$. There exists a positive constant $C>0$ such that
	\begin{align}\label{w3}
	\|Z^{\f12}\nabla\Fq\|_{L^2(\Omega)}\leq C\|Z^{\f12}\omega_{\Fq}\|_{L^2(\Omega)},
	\end{align}
	where $\omega_{\Fq}=\pa_yq_1-\pa_xq_2$ is the vorticity of $\Fq$.
\end{lemma}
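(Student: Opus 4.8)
The plan is to prove the nontrivial bound \eqref{w3}; the converse $\|Z^{\f12}\omega_{\Fq}\|_{L^2(\Omega)}\le\sqrt2\,\|Z^{\f12}\nabla\Fq\|_{L^2(\Omega)}$ is immediate from $\omega_{\Fq}=\pa_yq_1-\pa_xq_2$. I would expand $\Fq=\sum_n\Fq_ne^{i\tn x}$, $\Fq_n=(q_{1,n},q_{2,n})$, and, since $Z=Z(y)$, reduce by Parseval to a frequency-by-frequency estimate $\|Z^{\f12}\nabla\Fq_n\|_{L^2(\R_+)}\le C\|Z^{\f12}\omega_n\|_{L^2(\R_+)}$ with $C$ independent of $n$ and $\vep$. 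For $n=0$, $\nabla\cdot\Fq=0$ together with $q_{2,0}|_{y=0}=0$ forces $q_{2,0}\equiv0$, so $\nabla\Fq_0=(0,\pa_yq_{1,0},0,0)$ and the estimate is an equality.

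For $n\neq0$ I would pass to the stream function: setting $q_{1,n}=\pa_y\psi_n$, $q_{2,n}=-i\tn\psi_n$ gives $\omega_n=(\pa_y^2-\tn^2)\psi_n$, and $q_{2,n}|_{y=0}=0$ forces $\psi_n(0)=0$. From the pointwise identity $|\nabla\Fq_n|^2-|\omega_n|^2=2\tn^2\,\rmre\,\pa_y\!\left(\pa_y\psi_n\,\overline{\psi_n}\right)$, multiplying by $Z$ and integrating by parts twice (the boundary terms vanishing since $\psi_n(0)=0$ and $Z'(y)=0$ for $y\ge2$) yields the key identity
$$
\|Z^{\f12}\nabla\Fq_n\|_{L^2}^2=\|Z^{\f12}\omega_n\|_{L^2}^2+\tn^2\int_0^\infty Z''(y)\,|\psi_n(y)|^2\,\dd y .
$$
Everything then reduces to absorbing the commutator-type remainder
$$
\CE_n:=\tn^2\int_0^\infty Z''(y)\,|\psi_n(y)|^2\,\dd y=\int_0^\infty Z''(y)\,|q_{2,n}(y)|^2\,\dd y ,
$$
i.e. to proving $|\CE_n|\le\tfrac12\|Z^{\f12}\nabla\Fq_n\|_{L^2}^2+C\|Z^{\f12}\omega_n\|_{L^2}^2$.

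To handle $\CE_n$ I would use Lemma \ref{lmz}: $Z''$ is supported in $[0,2]$, it is $\le0$ on $[\f32,\infty)$ (so that portion of $\CE_n$ is nonpositive and may be discarded), and on $[0,\f32]$ the bounds $|y^kZ''(y)|\le C_0\b{M}\vep^{\frac{k-1}{2}}$ reflect that $Z''$ is essentially concentrated at the layer scale $y\sim\sqrt\vep$, which produces a quantitative gain in $\vep$ (e.g. $\int_0^2 y\,|Z''(y)|\,\dd y$ is small with $\vep$). Using $q_{2,n}(0)=0$, so that $q_{2,n}=\pa_y^{-1}(\pa_yq_{2,n})=-i\tn\,\pa_y^{-1}q_{1,n}$, I would estimate $\int_0^2 Z''|q_{2,n}|^2$ by combining this weight gain with the weighted Hardy inequality \eqref{w1_1} (Lemmas \ref{lmw1}–\ref{lmw2}), which is exactly tailored to the operator $\pa_y^{-1}$ acting on $q_{1,n}$ near $\{y=0\}$, and then with the interpolation \eqref{AP1} to trade the residual un-weighted norms of $q_{1,n}$ for $Z^{\f12}$-weighted ones; here one keeps all constants uniform in $n$ by using $|\tn|\ge\varrho^{-1}$ to bound $\|Z^{\f12}q_{i,n}\|_{L^2}\lesssim_\varrho\|Z^{\f12}\nabla\Fq_n\|_{L^2}$. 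Summing over $n$ via Parseval then gives \eqref{w3}.

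The main obstacle is precisely this control of $\CE_n$: the commutator $[\pa_y,Z]=Z'$ is not a boundary-layer quantity, so the naive $\vep^{\f14}$ gain is unavailable, and since $Z$ degenerates like $y$ at $\{y=0\}$ while $Z''$ is singular there, one sits at the borderline of Hardy's inequality — the situation for which the logarithmically weighted estimates of Lemmas \ref{lmw1}–\ref{lmw2} were designed — and the bookkeeping is delicate because the final constant must be made simultaneously independent of $n$ and of $\vep$.
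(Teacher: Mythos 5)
Your setup (Fourier reduction, stream function, the identity $\|Z^{\f12}\nabla\Fq_n\|_{L^2}^2=\|Z^{\f12}\omega_n\|_{L^2}^2+\tn^2\int_0^\infty Z''|\psi_n|^2\,\dd y$) is correct, and you have correctly isolated the crux in the remainder $\CE_n=\int_0^2 Z''|q_{2,n}|^2\,\dd y$. But the plan you give for absorbing $\CE_n$ does not close, and this is a genuine gap rather than delicate bookkeeping. Since $q_{2,n}(0)=0$ and $Z\sim y$ on $[0,2]$, any attempt to express $|q_{2,n}(y)|^2$ through $\pa_y q_{2,n}$ with the weight $Z$ sits exactly at the critical Hardy exponent, so you are forced into the logarithmic version \eqref{w1_1}--\eqref{w2}; following that route one obtains at best
\begin{equation*}
|\CE_n|\;\lesssim\;\bar{M}\,|\log\vep|\left(\big\|Z^{\f12}\pa_yq_{2,n}\big\|_{L^2}^2+\vep^{\f12}\big\|\pa_yq_{2,n}\big\|_{L^2}^2\right).
\end{equation*}
The first term is a piece of $\|Z^{\f12}\nabla\Fq_n\|_{L^2}^2$ multiplied by a constant of size $\bar{M}|\log\vep|$, which is neither $<1$ (so it cannot be absorbed into the left-hand side) nor dominated by $\|Z^{\f12}\omega_n\|_{L^2}^2$ without circularity; and the second term involves an un-weighted norm that cannot be recovered from $Z^{\f12}$-weighted quantities via \eqref{AP1} without second derivatives of $\Fq_n$. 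The quantitative gain you invoke is also weaker than claimed: from \eqref{z2} one only gets $\int_0^{3/2}y|Z''|\,\dd y\lesssim\bar{M}\vep^{\f12}|\log\vep|$, and trading the resulting un-weighted norms back for weighted ones consumes the whole gain. Finally, the lemma needs a constant independent of $\vep$ (and it is used as such in the proof of Proposition \ref{prop2.2}), so even a residual $|\log\vep|$ factor would be fatal.

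The idea you are missing is the one the paper uses: the estimate \eqref{w3} sees the weight only through its pointwise values, so one may replace $Z$ by any pointwise-equivalent weight, and $\widetilde{Z}(y):=\f{y}{1+y}$ satisfies $C^{-1}Z\leq\widetilde{Z}\leq CZ$ (by \eqref{z1}) while being globally concave, $\widetilde{Z}''=-\f{2}{(1+y)^3}\leq0$. Running your identity with $\widetilde{Z}$ in place of $Z$, the remainder $\int_0^\infty\widetilde{Z}''|q_{2,n}|^2\,\dd y$ is nonpositive and can simply be dropped, giving $\|\widetilde{Z}^{\f12}\nabla\Fq_n\|_{L^2}\leq\|\widetilde{Z}^{\f12}\omega_n\|_{L^2}$ with a universal constant; the equivalence of weights then transfers this back to $Z$. (The paper implements this in physical space by testing $\Delta\phi_{\Fq}=\omega_{\Fq}$ against $\widetilde{Z}\pa_x^2\phi_{\Fq}$, which is the same computation.) With that single modification your argument becomes correct and, in fact, shorter than the route through Lemmas \ref{lmw1}--\ref{lmw2}, which are not needed here.
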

\begin{proof}
Since $\pa_yq_1=\omega_\Fq+\pa_xq_2$ and $\pa_yq_2=-\pa_xq_1$,  it suffices to prove
\begin{align}\label{w3-1}
	\|Z^{\f12}\pa_x\Fq\|_{L^2(\Omega)}\lesssim\|Z^{\f12}\omega_{\Fq}\|_{L^2(\Omega)}.
\end{align} 
 Let $\phi_{\Fq}$ be the stream function of $\Fq$ and $\phi_{\Fq}$ is determined by 
\begin{align}\label{wq1}
\Delta\phi_{\Fq}=\omega_{\Fq}, \ \ \mbox{in}~ \Omega~;\qquad \phi_{\Fq}|_{y=0}=0.
\end{align}
For convenience, we introduce $\tilde{Z}(y):=\frac{y}{1+y}$. According to \eqref{z1}, we can find two positive constants $\underline{c}$ and $\b{c}$, such that
\begin{align*}
\underline{c}Z(y)\leq \tilde{Z}(y)\leq \b{c}Z(y),
\end{align*}
which implies the equivalence between the norms $\|Z^{\f12}f\|_{L^2(\Omega)}$ and $\|\tilde{Z}^{\f12}f\|_{L^2(\Omega)}$. 
Thus, we only need to show \eqref{w3-1} for the weight $\tilde{Z}^{\f12}$.  
By taking inner product of \eqref{wq1} with $\tilde{Z}\pa_{x}^2\phi_{\Fq}$, one has
$$\int_{\Omega}\tilde{Z}\pa_{x}^2\phi_{\Fq}\Delta\phi_{\Fq}d xd y=\int_{\Omega}\tilde{Z}\pa_{x}^2\phi_{\Fq}\omega_{\Fq}d xd y.
$$
It follows from Cauchy-Schwarz inequality that
$$\left|\int_{\Omega}\tilde{Z}\pa_{x}^2\phi_{\Fq}\omega_{\Fq}d xd y\right|\leq \|\tilde{Z}^{\f12}\pa_{x}^2\phi_{\Fq}\|_{L^2(\Omega)}\|\tilde{Z}^{\f12}\omega_{\Fq}\|_{L^2(\Omega)}\leq  \|\tilde{Z}^{\f12}\pa_{x}q_2\|_{L^2(\Omega)}\|\tilde{Z}^{\f12}\omega_{\Fq}\|_{L^2(\Omega)}.
$$
By integration by parts and using the fact that $\pa_y^2\tilde{Z}(y)=-\frac{2}{(1+y)^3}\leq 0$, it yields
$$
\begin{aligned}
\int_{\Omega}\tilde{Z}\pa_{x}^2\phi_{\Fq}\Delta\phi_{\Fq}d xd y&=\int_{\Omega}\tilde{Z}|\pa_{x}^2\phi_{\Fq}|^2d xd y-\int_{\Omega}\tilde{Z}\pa_x\phi_\Fq\pa_y^2(\pa_x\phi_\Fq) d xd y\\
&=\int_{\Omega}\tilde{Z}\left[|\pa_{x}^2\phi_{\Fq}|^2+|\pa_{xy}^2\phi_{\Fq}|^2\right]d xd y+\int_{\Omega}\pa_y\tilde{Z}\pa_{xy}^2\phi_\Fq\pa_x\phi_\Fq d xd y\\
&=\int_{\Omega}\tilde{Z}\left|\pa_{x}\nabla\phi_\Fq\right|^2d xd y-\f12\int_{\Omega}\pa_y^2\tilde{Z}|\pa_{x}\phi_\Fq|^2d xd y\\
&\geq \left\|\tilde{Z}^{\f12}\pa_{x}\Fq\right\|_{L^2(\Omega)}^2.
\end{aligned}
$$
Combining the above two inequalities implies that $\|\tilde{Z}^{\f12}\pa_x\Fq\|_{L^2(\Omega)}\leq C\|\tilde{Z}^{\f12}\omega_{\Fq}\|_{L^2(\Omega)}$, and \eqref{w3-1} follows. Therefore, we obtain \eqref{w3} and complete the proof of the lemma. 
\end{proof}

\section{Linear stability}

To obtain the solution to nonlinear problem \eqref{1.2}, we first consider the following linearized system 
\begin{equation}
\left\{
\begin{aligned}\label{2.0}
&U_s\pa_x{\FU}+{v}\pa_yU_s\Fe_1-H_s\pa_x{\FH}-{g}\pa_yH_s\Fe_1+ \nabla P-\mu\vep\Delta{\FU}=\Ff,\\
&U_s\pa_x{\FH}+{v}\pa_yH_s\Fe_1-H_s\pa_x{\FU}-{g}\pa_yU_s\Fe_1-\ka\vep\Delta{\FH}=\Fq,\\
&\nabla\cdot {\FU}=\nabla\cdot {\FH}=0,\\
&\FU|_{y=0}=(\partial_y h,g)|_{y=0}=\mathbf{0},
\end{aligned}
\right.
\end{equation}
where $\Ff=(f_1,f_2)$ and $\Fq=(q_1,q_2)$ are given inhomogeneous source terms. 
 Since $U_s$ and $H_s$ are independent of $x$, it is convenient to take Fourier transform in $x$ for \eqref{2.0} and study the following equivalent system:
\begin{equation}
\left\{
\begin{aligned}\label{2.1}
&i\tn U_s\FU_n+v_n\pa_yU_s\Fe_1-i\tn H_s\FH_n-g_n\pa_yH_s\Fe_1+(i\tn P_n,\pa_yP_n)-\mu\vep(\pa_y^2-\tn^2)\FU_n=\Ff_n,\\
&i\tn U_s\FH_n+v_n\pa_yH_s\Fe_1-i\tn H_s\FU_n-g_n\pa_yU_s\Fe_1-\ka\vep(\pa_y^2-\tn^2)\FH_n=\Fq_n,\\
&i\tn u_n+\pa_y v_n=i\tn h_n+\pa_yg_n=0,\\
&(u_n,v_n,\partial_yh_n,g_n)|_{y=0}=\mathbf{0}.
\end{aligned}
\right.
\end{equation}
Here $n\in\Z,~\tilde{n}=\f{n}{\varrho}, ~\FU_n=\FU_n(y)=\big(u_n(y),v_n(y)\big)$ and $\FH_n=\FH_n(y)=\big(h_n(y),g_n(y)\big)$ are $n$-th Fourier coefficients of the  velocity field $\FU(x,y)$ and magnetic field $\FH(x,y)$ respectively; and $\Ff_n=\Ff_n(y)=\big(f_{1,n}(y),f_{2,n}(y)\big)$, $\Fq_n=\Fq(y)=\big(q_{1,n}(y),q_{2,n}(y)\big)$ correspond to  $\Ff(x,y)$ and $\Fq(x,y)$ respectively. 
Moreover, it is not difficult to check that the following compatibility condition for $\Fq$ is needed: 
\begin{align}\label{ass-q}	\nabla\cdot \Fq=0,~ q_2|_{y=0}=0,\end{align}
and then $q_{2,0}\equiv0$ as a direct consequence of \eqref{ass-q}. 

For  simplicity of notation, we set $\FW=(\FU,\FH)$ and $\FW_n=(\FU_n,\FH_n)$. 
Let $\CI$ and $\pa_y^{-1}$ be anti-derivative operators defined by 
$$\CI f(y)=-\int_y^\infty f(y')d y',\qquad \pa_y^{-1}f(y)=\int_0^y f(y')d y'$$  respectively for any $f\in L^1(\mathbb{R}_+)$. Recall the solution space $\CX$ and its norm defined in \eqref{X} and \eqref{1.1-3} respectively. The solvability of the linear problem \eqref{2.0} is given by the following proposition.
\begin{proposition}\label{prop2.2}
		There exist positive constants  $\delta_1$ and $\vep_1$, such that the following statement holds. If
		$$
		\varrho(\b{M}+\b{M}^4)\leq \delta_1,\qquad \vep\in (0,\vep_1),
		$$
then for any $(\Ff,\Fq)$ 
satisfying \eqref{ass-q} and \begin{align}\label{2.3}
		(\CI f_{1,0},~\pa_y^{-1}q_{1,0})\in L^1(\mathbb{R}_+)\cap L^2(\mathbb{R}_+), \quad\CQ_{0}(\Ff, \Fq)\in L^2(\Omega),
		\end{align}
		the linear problem \eqref{2.0} admits a unique solution $\FW\in \CX$  that satisfies for any $\eta>0$:
		\begin{align}\label{prop2-1}
		\begin{aligned}
		\|\FW\|_{\CX}\leq& C\vep^{-1}\left[\left\|\big(\CI f_{1,0},\pa_y^{-1}q_{1,0}\big)\right\|_{L^1}+\vep^{\f14}\left\|\big(\CI f_{1,0},\pa_y^{-1}q_{1,0}\big)\right\|_{L^2}+\big\|Z^{\f12}\big(\CI f_{1,0},\pa_y^{-1}q_{1,0}\big)\big\|_{L^2}\right]\\
		&+C\vep^{-\f14}|\log\vep|^{\frac{3+\eta}{2}}\left[\|\CQ_0(\Ff,\Fq)\|_{L^2(\Omega)}+\vep^{-\f14}\|Z^{\f12}\CQ_0(\Ff,\Fq)\|_{L^2(\Omega)}\right].
		\end{aligned}\end{align}
Here the positive constant $C$ is independent of $\vep$.
\end{proposition}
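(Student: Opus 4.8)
\textbf{Proof strategy for Proposition \ref{prop2.2}.} The plan is to decompose the problem into the zero Fourier mode and the nonzero modes, handle each with energy estimates adapted to the good-unknown formulation, and then assemble the solution by a continuity/Galerkin-type argument together with the a priori bound \eqref{prop2-1}. For the zero mode, the divergence-free and boundary conditions force $(\FU_0,\FH_0)=(u_0,0,h_0,0)$, so the system reduces to a pair of decoupled one-dimensional ODEs of the form $-\mu\vep\pa_y^2 u_0 = f_{1,0}$, $-\ka\vep\pa_y^2 h_0 = q_{1,0}$ with the boundary conditions $u_0|_{y=0}=0$, $\pa_y h_0|_{y=0}=0$ and vanishing at infinity; integrating twice and using the hypotheses \eqref{2.3} on $\CI f_{1,0}$ and $\pa_y^{-1}q_{1,0}$ gives directly the $L^\infty$, weighted and unweighted $\dot H^1$ bounds, producing the first bracket in \eqref{prop2-1} with the stated $\vep^{-1}$ weight. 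This part is routine.

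The core of the argument is the estimate for the nonzero modes $\CQ_0\FW$. First I would pass to the new unknowns $(\widehat{\FU},\widehat{\FH})$ described in Section 3.2 (the transform keeping both divergence-free constraints), under which the stretching terms $v\pa_yU_s-g\pa_yH_s$ and $v\pa_yH_s-g\pa_yU_s$ cancel, yielding for each $n\neq0$ the reformulated system \eqref{pr-new-n} whose principal part is
\begin{align*}
-i\tn G_s(y/\sqrt\vep)\,\widehat{\FH}_n + (i\tn p_n,\pa_yp_n) - \vep\mu(\pa_y^2-\tn^2)\widehat{\FU}_n &= \cdots,\\
-i\tn \widehat{\FU}_n - \vep\ka(\pa_y^2-\tn^2)\widehat{\FH}_n &= \cdots.
\end{align*}
Then I would run three coupled energy estimates. (i) An un-weighted $L^2$ estimate for $\widehat{\FU}_n$: testing the second equation with $\widehat{\FU}_n$ (Lemma \ref{prop2.4}), where the boundary condition $\widetilde{\FU}|_{y=0}=\mathbf 0$ kills the boundary terms and $\vep^{1/2}\|\nabla\widehat{\FU}_n\|$ is controlled via the $L^2$-coercivity Lemma \ref{prop2.3}. (ii) A weighted estimate via the vorticity formulation \eqref{2.11}: applying the multiplier $Z\hat\psi_n$ to the magnetic-vorticity equation produces the good term $\CJ_1=\tn\int G_s Z|\widehat{\FH}_n|^2$ whose lower-order companion $\CJ_2$ is absorbed precisely because $Z$ was engineered so the most singular part of $-\pa_y(G_s\pa_yZ)$ cancels (Lemma \ref{lmz}); this yields the weighted bound on $\widehat{\FH}_n$ with the crucial gain of $\vep^{1/4}$ (Lemma \ref{prop2.5}). (iii) A weighted estimate for the full vorticity gradient, controlled by the vorticity itself through Lemma \ref{lmw3}, with the commutator terms $[\pa_y,Z]=\pa_yZ$ — which do not enjoy the boundary-layer gain — handled by the weighted Hardy inequality \eqref{w1_1}, at the price of the $|\log\vep|^{1+\eta/3}$ factors from Lemma \ref{lmw2}. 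At each step one must track the explicit dependence of constants on $n$, $\varrho$ and $\bar M$; summing over $n\neq0$ and closing with the interpolation inequality \eqref{AP1} (to pass from weighted $L^2$ plus $\vep^{1/4}$-small $L^2$ back to the un-weighted $L^2$) converts these into the second bracket of \eqref{prop2-1}, the smallness $\varrho(\bar M+\bar M^4)\leq\delta_1$ being exactly what is needed to absorb the coupling terms into the left-hand side.

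With the a priori estimate \eqref{prop2-1} in hand for the linearized operator, existence and uniqueness of $\FW\in\CX$ follow by a standard argument: uniqueness is immediate from \eqref{prop2-1} applied to the difference of two solutions with zero data; for existence one either runs a Galerkin approximation in the $x$-frequency variable (solving the finitely many coupled ODE systems in $y$, which is possible because the principal operator is elliptic for fixed $\vep$ and the coercivity is uniform), or a viscosity/regularization plus continuity method in a parameter multiplying the convective part, and then passes to the limit using the uniform bound; the membership $\FW\in H^2_{loc}(\Omega)$ and $\nabla P\in L^2(\Omega)$ is recovered a posteriori from interior elliptic regularity for the Stokes-type system once $\FW\in\CX$ is known. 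The main obstacle is step (ii)--(iii): making the weighted vorticity estimate close requires the simultaneous control of the un-weighted and weighted norms (they are genuinely entangled through the stretching-type remainders and the pressure), so the whole set of estimates must be bootstrapped together as a single inequality for $\|\FW\|_{\CX}$, and the delicate point is that every constant appearing in front of a term that is \emph{not} multiplied by a smallness factor must ultimately be absorbed either by $\vep$-smallness, by $\varrho$-smallness, or by the $|\log\vep|$ budget allotted in the statement — leaving no room for slack in the bookkeeping.
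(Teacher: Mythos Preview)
Your strategy matches the paper's proof essentially point for point: zero mode via explicit integration (Lemma \ref{prop2.1}), nonzero modes via the good-unknown reformulation followed by the chain Lemma \ref{prop2.3} $\to$ Lemma \ref{prop2.4} $\to$ Lemma \ref{prop2.5} $\to$ interpolation \eqref{AP1} $\to$ Lemma \ref{prop2.7}, and existence by a regularization-plus-continuity argument (the paper adds $s\FU$, $s\FH$ and sends $s\to0$). One step you do not mention, and which the paper carries out separately, is the control of the $\sum_{n\neq0}\|\FW_n\|_{L^\infty}$ piece of the $\CX$-norm: this requires the one-dimensional interpolation $\|f\|_{L^\infty}\lesssim\|\pa_yf\|_{L^2}^{1/2}\|f\|_{L^2}^{1/2}$ together with a frequency splitting $|n|\le\vep^{-1}$ versus $|n|>\vep^{-1}$ so that the resulting decay in $|n|$ is summable; without this splitting the sum would produce an extra $|\log\vep|^{1/2}$ beyond what is stated.
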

The following three subsections are devoted to the proof of Proposition \ref{prop2.2}.
\subsection{Estimate on zero mode}
We first consider the zero-mode $(\FU_0,\FH_0)$. When $n=0$, the system \eqref{2.1} reduces to the following simple ODE system:
\begin{equation}
\left\{
\begin{aligned}\label{2.2}
&v_0\pa_yU_s-g_0\pa_yH_s-\mu\vep\pa_y^2u_0=f_{1,0},\\
&\pa_yp_0-\mu\vep\pa_y^2v_0=f_{2,0},\\
&v_0\pa_yH_s-g_0\pa_yU_s-\ka\vep\pa_y^2h_0=q_{1,0},\\
&-\ka\vep\pa_y^2g_0=0,\\
&\pa_yv_0=\pa_yg_0=0,\\
&(u_0,v_0,\partial_yh_0,g_0)|_{y=0}=\mathbf{0}.
\end{aligned}
\right.
\end{equation}
We can explicitly solve \eqref{2.2} to have $v_0=g_0=0$ and
\begin{align*}
\begin{aligned}
u_0=\frac{1}{\mu\vep}\int_0^y\int_{y'}^{+\infty}f_{1,0}(y'')d y''d y'=-\frac{1}{\mu\vep}\int_0^y\CI f_{1,0}(y')d y',\\ h_0=\frac{1}{\ka\vep}\int_y^\infty\int_0^{y'}q_{1,0}(y'')d y''d y'=\frac{1}{\ka\vep}\int_y^\infty\pa_y^{-1}q_{1,0}(y')d y'.
\end{aligned}\end{align*}
As a direct consequence, one has the lemma.
\begin{lemma}\label{prop2.1}
	From \eqref{2.3} it holds that
	\begin{align*}
	\|(u_0,h_0)\|_{L^\infty}&\leq C\vep^{-1}\big\|\big(\CI f_{1,0},\pa_y^{-1}q_{1,0}\big)\big\|_{L^1},\\
	 \|(\pa_yu_0,\pa_yh_0)\|_{L^2}&\leq C\vep^{-1}\big\|\big(\CI f_{1,0},\pa_y^{-1}q_{1,0}\big)\big\|_{L^2},\\
	 \big\|Z^{\f12}(\pa_yu_0,\pa_yh_0)\big\|_{L^2}&\leq C\vep^{-1}\big\|Z^{\f12}\big(\CI f_{1,0},\pa_y^{-1}q_{1,0}\big)\big\|_{L^2}.
	\end{align*}
\end{lemma}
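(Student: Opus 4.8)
The plan is to exploit the explicit representation formulas for $u_0$ and $h_0$ already derived from the zero-mode ODE system \eqref{2.2}. Recall that since $v_0=g_0=0$, the equations for $u_0$ and $h_0$ decouple into simple second-order ODEs with homogeneous data, namely $-\mu\vep\pa_y^2 u_0=f_{1,0}$ with $u_0(0)=0$, $\pa_y u_0(\infty)=0$, and $-\ka\vep\pa_y^2 h_0=q_{1,0}$ with $\pa_y h_0(0)=0$, $h_0(\infty)=0$. Integrating once gives $\pa_y u_0(y)=\frac{1}{\mu\vep}\int_y^\infty f_{1,0}(y')\,dy'=-\frac{1}{\mu\vep}\CI f_{1,0}(y)$ and $\pa_y h_0(y)=-\frac{1}{\ka\vep}\int_0^y q_{1,0}(y')\,dy'=-\frac{1}{\ka\vep}\pa_y^{-1}q_{1,0}(y)$. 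The three asserted bounds then follow by taking, respectively, the $L^\infty$, $L^2$, and weighted $L^2$ norms of these pointwise identities.

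First I would prove the $L^2$ and weighted-$L^2$ bounds on the derivatives, which are immediate: since $\pa_y u_0=-\frac{1}{\mu\vep}\CI f_{1,0}$ pointwise, we get $\|\pa_y u_0\|_{L^2}=\frac{1}{\mu\vep}\|\CI f_{1,0}\|_{L^2}\le C\vep^{-1}\|\CI f_{1,0}\|_{L^2}$ and $\|Z^{\f12}\pa_y u_0\|_{L^2}=\frac{1}{\mu\vep}\|Z^{\f12}\CI f_{1,0}\|_{L^2}\le C\vep^{-1}\|Z^{\f12}\CI f_{1,0}\|_{L^2}$, with $\mu$ absorbed into $C$; the same argument applies verbatim to $h_0$ with $\pa_y^{-1}q_{1,0}$ in place of $\CI f_{1,0}$ and $\ka$ in place of $\mu$. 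For the $L^\infty$ bound on $(u_0,h_0)$ themselves I would use that $u_0(y)=-\frac{1}{\mu\vep}\int_0^y \CI f_{1,0}(y')\,dy'$, so $|u_0(y)|\le\frac{1}{\mu\vep}\int_0^\infty|\CI f_{1,0}(y')|\,dy'=\frac{1}{\mu\vep}\|\CI f_{1,0}\|_{L^1}$, uniformly in $y$; likewise $|h_0(y)|\le\frac{1}{\ka\vep}\|\pa_y^{-1}q_{1,0}\|_{L^1}$. Taking suprema gives the first inequality. Since $\CI f_{1,0},\pa_y^{-1}q_{1,0}\in L^1\cap L^2$ by \eqref{2.3}, and the weighted norm is controlled by the unweighted one since $Z$ is bounded (Lemma \ref{lmz}(1)), all quantities on the right are finite and the formulas are justified.

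The only point requiring mild care — rather than a genuine obstacle — is verifying that the explicit formulas are legitimate, i.e.\ that the iterated integrals converge and that the resulting $u_0,h_0$ really lie in the spaces claimed (so that, in particular, the far-field conditions $\pa_y u_0(\infty)=0$ and $h_0(\infty)=0$ used in the integration make sense). This is guaranteed precisely by the hypothesis $(\CI f_{1,0},\pa_y^{-1}q_{1,0})\in L^1(\R_+)\cap L^2(\R_+)$ in \eqref{2.3}: the $L^1$ integrability makes $u_0$ bounded and makes $h_0$ decay to a limit at infinity (which must be zero by the imposed far-field condition), while $L^2$ integrability of $\CI f_{1,0}$ and $\pa_y^{-1}q_{1,0}$ is exactly what makes $\pa_y u_0,\pa_y h_0\in L^2$. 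No smallness or structural assumption on the shear profile is needed here; the lemma is a direct computation, and this is why the excerpt states it as an immediate consequence of the explicit solution.
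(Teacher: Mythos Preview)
Your proposal is correct and follows exactly the paper's approach: the lemma is stated in the paper as ``a direct consequence'' of the explicit solution formulas $u_0=-\frac{1}{\mu\vep}\int_0^y\CI f_{1,0}\,dy'$ and $h_0=\frac{1}{\ka\vep}\int_y^\infty\pa_y^{-1}q_{1,0}\,dy'$, and your argument simply unpacks this by reading off $\pa_y u_0$, $\pa_y h_0$ pointwise and taking the appropriate norms.
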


\subsection{Estimate on non-zero mode}
Next we consider non-zero mode $(\FU_n, \FH_n),~n\neq0$. Since $\FH=(h,g)$ is divergence-free, there exists a stream function $\psi(x,y)$, such that 
$$h=\pa_y\psi, \quad g=-\pa_x\psi,\quad \psi|_{y=0}=0,
$$ 
and the equation of $\psi$ is given by
\begin{align*}
U_s\pa_x\psi+H_sv-\ka\vep\Delta\psi=\pa_y^{-1}q_1.
\end{align*}
Inspired by \cite{LXY2}, we denote by
 $$\displaystyle a_p(Y)=\frac{U_s(Y)}{H_s(Y)},\qquad b_p(Y)=\frac{\partial_YH_s(Y)}{H_s(Y)},$$
 and introduce 
new ``good unknown function" $\widehat{\FW}=(\widehat{\FU},\widehat{\FH})=(\hat{u},\hat{v},\hat{h},\hat{g})$:
\begin{align}\label{new-unknown}
\begin{cases}
\hat{u}(x,y)&=u(x,y)-\pa_y\big(a_p(Y)\psi(x,y)\big),\\ \hat{v}(x,y)&=v(x,y)+\pa_x\big(a_p(Y)\psi(x,y)\big),\\
\hat{h}(x,y)&=\pa_y\left(\f{\psi(x,y)}{H_s(Y)}\right)=\frac{1}{H_s(Y)}\left(h(x,y)-\varepsilon^{-\frac{1}{2}}b_p(Y) \psi(x,y)\right),\\
\displaystyle \hat{g}(x,y)&=-\pa_x\left(\f{\psi(x,y)}{H_s(Y)}\right)=\frac{g(x,y)}{H_s(Y)}
\end{cases}
\end{align}
with $Y=\frac{y}{\sqrt{\varepsilon}}.$ Also, denote by 
\begin{align*}
	\hat{\psi}(x,y)=\frac{\psi(x,y)}{H_s(Y)},
\end{align*}
and it is easy to check   that $\hat{\psi}$ is the stream function of $\widehat{\FH}$.
Then by this transformation and some tedious calculations we can rewrite  \eqref{2.0} into the following problem for $\widehat{\FW}$:
\begin{align}\label{pr-new}
	\begin{cases}
\displaystyle
	(1+\frac{\mu}{\kappa})U_s\pa_x\widehat{\FU}-
	G_s\pa_x\widehat{\FH}+\varepsilon^{\f12}\big(\bm{A}_{\FU}\pa_{x}\widehat{\FH}+\bm{B}_{\FU}\pa_{y}\widehat{\FH}\big)+\bm{C}_{\FU}\widehat{\FH}+\varepsilon^{-\f12}\hat{\psi}\FD_{\FU}
	+\nabla P=\mu\vep\Delta\widehat{\FU}+\FR_{\FU},\\
\displaystyle	-\pa_x\widehat{\FU}-2\kappa\varepsilon^{\f12}b_p\partial_y\widehat{\FH}+\bm{C}_{\FH}\widehat{\FH}+\varepsilon^{-\f12}\hat{\psi}\FD_{\FH}=\ka\vep\Delta\widehat{\FH}+\FR_{\FH},\\
\displaystyle	\nabla\cdot \widehat{\FU}=\nabla\cdot \widehat{\FH}=0,\\
\displaystyle	\widehat{\FU}|_{y=0}=(\partial_y \hat{h},\hat{g})|_{y=0}=\mathbf{0}.
	\end{cases}
\end{align}
Here $\bm{A}_{\FU}, \bm{B}_{\FU}, \bm{C}_{\FU}, \bm{C}_{\FH}$ are matrices and $\FD_{\FU}, \FD_{\FH}$ are vectors. These terms depend only on $\mu,\kappa,U_s, H_s$, and they have the following forms:
\begin{align}\label{form-metrics}
\begin{aligned}
&\bm{A}_\FU=\begin{pmatrix}
0&(\mu-\kappa)\pa_YU_s\\
0&0
\end{pmatrix},\quad \bm{B}_\FU=\begin{pmatrix}
(\kappa-3\mu)\pa_YU_s+2\mu a_p\pa_YH_s & 0\\
0&2\mu (a_p\pa_YH_s-\pa_YU_s)
\end{pmatrix},\\
&\bm{C}_\FU=\frac{1}{H_s}\begin{pmatrix}
2\kappa \pa_YH_s\pa_YU_s-2\mu a_p(\pa_YH_s)^2+3\mu(U_s\pa_Y^2H_s-H_s\pa_Y^2U_s) & 0\\
0&\mu(U_s\pa_Y^2H_s-H_s\pa_Y^2U_s)
\end{pmatrix},\\
&\bm{C}_\FH=\frac{\kappa}{H_s^2}\begin{pmatrix}
2(\pa_YH_s)^2-3H_s\pa_Y^2H_s&0\\
0&-H_s\pa_Y^2H_s
\end{pmatrix},
\end{aligned}
\end{align}
and
\begin{align}\label{form-vec}
\begin{aligned}
	&\FD_\FU=\frac{1}{H_s}\left(\kappa\pa_YU_s\pa_Y^2H_s-\mu a_p\pa_YH_s\pa_Y^2H_s+\mu U_s\pa_Y^3H_s-\mu H_s\pa_Y^3U_s ,~0\right)^T,\\
	&\FD_\FH=\frac{\kappa}{H_s^2}\left(\pa_YH_s\pa_Y^2H_s-H_s\pa_Y^3H_s,~0\right)^T.
\end{aligned}\end{align}
The source term $\FR\triangleq(\FR_\FU,\FR_\FH)=(R_u,R_v,R_h,R_g)$ is given by
\begin{align*}
\begin{aligned}
	&\FR_\FU=(R_u,R_v)=\left(f_1-\frac{\mu}{\kappa}a_pq_1+\frac{\varepsilon^{-\f12}}{H_s}\left(\frac{\mu}{\kappa}a_p\pa_YH_s-\pa_YU_s\right)\pa_y^{-1}q_1,~f_2-\frac{\mu}{\kappa}a_pq_2\right)^T,\\
	&\FR_\FH=(R_h,R_g)=\frac{1}{H_s}\left(q_1-\varepsilon^{-\f12}b_p\pa_y^{-1}q_1,~q_2\right)^T,
\end{aligned}\end{align*}
where the divergence-free condition $\nabla\cdot\Fq=0$ has been used. 

Now, let us turn to the Fourier mode. According to \eqref{new-unknown}, the $n$-th Fourier coefficients $\widehat{\FW}_n=(\widehat{\FU}_n,\widehat{\FH}_n)\triangleq(\hu,\hv,\hh,\hg)$ of $\widehat{\FW}$ are given by
\begin{align}\label{n-unknown}
\begin{cases}
\hu(y)=u_n(y)-\pa_y\big(a_p(Y)\psi_n(y)\big), \\
\hv(y)=v_n(y)+i\tn a_p(Y)\psi_n(y),\\
\hh(y)=\pa_y\hat{\psi}_n(y)
=\frac{1}{H_s(Y)}\left(h_n(y)-\varepsilon^{-\frac{1}{2}}b_p(Y) \psi_n(y)\right),\\ 
\hg(y)=-i\tn\hat{\psi}_n(y)
=\frac{g_n(y)}{H_s(Y)}.
\end{cases}\end{align}
Here $\hat{\psi}_n(y)$ and $\psi_n(y)$ are the $n$-th Fourier coefficients of $\hat{\psi}(x,y)$ and $\psi(x,y)$ respectively, and it holds that  $\displaystyle \hat{\psi}_n(y)=\frac{\psi_n(y)}{H_s(Y)}$. Then, we obtain by taking the Fourier transformation in the problem \eqref{pr-new} that
\begin{align}\label{pr-new-n}
\begin{cases}
\displaystyle
i\tn\left[(1+\frac{\mu}{\kappa})U_s\widehat{\FU}_n-
G_s\widehat{\FH}_n+\varepsilon^{\f12}\bm{A}_{\FU}\widehat{\FH}_n\right]+\varepsilon^{\f12}\bm{B}_{\FU}\pa_{y}\widehat{\FH}_n+\bm{C}_{\FU}\widehat{\FH}_n+\varepsilon^{-\f12}\hat{\psi}_n\FD_{\FU}\\
\displaystyle
\hspace{4.65cm}+(i\tn p_n,\pa_yp_n)^T-\mu\vep(\pa_y^2-\tn^2)\widehat{\FU}_n=\FR_{\FU,n},\\
\displaystyle	-i\tn\widehat{\FU}_n-2\kappa\varepsilon^{\f12}b_p\partial_y\widehat{\FH}_n+\bm{C}_{\FH}\widehat{\FH}_n+\varepsilon^{-\f12}\hat{\psi}_n\FD_{\FH}-\ka\vep(\pa_y^2-\tn^2)\widehat{\FH}_n=\FR_{\FH,n},\\
\displaystyle	i\tn\hat{u}_n+\pa_y\hat{v}_n=i\tn\hat{h}_n+\pa_y\hat{g}_n=0,\\
\displaystyle	\widehat{\FU}_n|_{y=0}=(\partial_y \hat{h}_n,\hat{g}_n)|_{y=0}=\mathbf{0},
\end{cases}
\end{align}
with the source $\FR_n\triangleq(\FR_{\FU,n}, \FR_{\FH,n})=(R_{u,n},R_{v,n},R_{h,n},R_{g,n})$: 
\begin{align}\label{form-source-n}
\begin{aligned}
&\FR_{\FU,n}=(R_{u,n},R_{v,n})=\left(f_{1,n}-\frac{\mu}{\kappa}a_pq_{1,n}+\frac{\varepsilon^{-\f12}}{H_s}\big(\frac{\mu}{\kappa}a_p\pa_YH_s-\pa_YU_s\big)\pa_y^{-1}q_{1,n},~f_{2,n}-\frac{\mu}{\kappa}a_pq_{2,n}\right)^T,\\
&\FR_{\FH,n}=(R_{h,n},R_{g,n})=\frac{1}{H_s}\left(q_{1,n}-\varepsilon^{-\f12}b_p\pa_y^{-1}q_{1,n},~q_{2,n}\right)^T.
\end{aligned}\end{align}
Before we  estimate $\widehat{\FW}_n$  in the new system \eqref{pr-new-n},  let us explain why $\widehat{\FW}$ defined by \eqref{new-unknown} is a  ``good unknown function". For this, we first show in next lemma the equivalence between the original unknown $\FW_n$ and the newly defined 
$\widehat{\FW}_n$. The proof is similar as \cite{LXY2}, we put it into the Appendix.
\begin{lemma}\label{lem2.3}
	For any $1<p\leq \infty$, it holds that
	\begin{align}
	\|\FW_n\|_{L^p}\sim_{\bar{M}} \|\widehat{\FW}_n\|_{L^p}.\label{lem2.3-1}
	\end{align}
	Moreover, we have
	\begin{align*}
	\|Z^{\f12}\FW_n\|_{L^2}+\vep^{\f14}\|\FW_n\|_{L^2}&\sim_{\bar{M}} \|Z^{\f12}\widehat{\FW}_n\|_{L^2}+\vep^{\f14}\|\widehat{\FW}_n\|_{L^2},\\
	\|\FW_n\|_{L^2}+\vep^{\f12}\|\pa_y\FW_n\|_{L^2}&\sim_{\bar{M}} \|\widehat{\FW}_n\|_{L^2}+\vep^{\f12}\|\pa_y\widehat{\FW}_n\|_{L^2},\\
	\|\FW_n\|_{L^2}+\vep^{\f14}\|Z^{\f12}(\pa_y\FW_n,i\tilde{n}\FW_n)\|_{L^2}&\sim_{\bar{M}} \|\widehat{\FW}_n\|_{L^2}+\vep^{\f14}\|Z^{\f12}(\pa_y\widehat{\FW}_n,i\tilde{n}\widehat{\FW}_n)\|_{L^2}.
	\end{align*}	
\end{lemma}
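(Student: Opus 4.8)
The plan is to exploit the explicit linear relations between $\FW_n = (u_n,v_n,h_n,g_n)$ and $\widehat{\FW}_n = (\hu,\hv,\hh,\hg)$ given in \eqref{n-unknown}, together with the bounds on $a_p = U_s/H_s$ and $b_p = \pa_YH_s/H_s$ coming from the Assumptions \eqref{A1}--\eqref{A4}. The key structural facts are: (i) by \eqref{A2} and \eqref{A3}, $a_p$ is bounded and $\pa_Y a_p = O((1+Y)^{-3})\bar M$; (ii) $b_p(Y) = O((1+Y)^{-3})\bar M$, so $\varepsilon^{-1/2}b_p(Y)$ is a genuine boundary layer quantity of size $\lesssim \bar M\, \varepsilon^{-1/2}(1+Y)^{-3}$, which in the variable $y$ reads $\lesssim \bar M\, \varepsilon(1+y)^{-3}\varepsilon^{-3/2}$ — crucially it is integrable against itself over $\R_+$ with the $\sqrt\varepsilon$ scaling. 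The inversion of \eqref{n-unknown}: recover $\hat\psi_n$ from $\hat h_n = \pa_y\hat\psi_n$ with $\hat\psi_n|_{y=0}=0$, hence $\psi_n = H_s(Y)\hat\psi_n$, then $h_n = H_s \hh + \varepsilon^{-1/2}\pa_Y H_s\, \hat\psi_n$ and $g_n = H_s\hg$, and finally $u_n = \hu + \pa_y(a_p\psi_n)$, $v_n = \hv - i\tn a_p\psi_n$.

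First I would prove the $L^p$ equivalence \eqref{lem2.3-1}. The "easy" directions $\|\hh\|_{L^p}, \|\hg\|_{L^p}, \|\hu\|_{L^p}, \|\hv\|_{L^p} \lesssim_{\bar M} \|\FW_n\|_{L^p}$ plus suitable auxiliary terms follow by direct substitution once one controls $\psi_n$ and $\hat\psi_n$. The point is that $\psi_n$ (equivalently $\hat\psi_n$) is controlled by $(h_n, g_n)$ \emph{through a Hardy-type inequality}, since $\psi_n(y) = \pa_y^{-1} h_n(y)$ is an antiderivative vanishing at $y=0$, and $\varepsilon^{-1/2}b_p(Y)$ provides the decaying weight that makes $\varepsilon^{-1/2}b_p\psi_n$, $\varepsilon^{-1/2}\pa_Y H_s \hat\psi_n$, $a_p\pa_y\psi_n$, etc., all controllable by $\|\FW_n\|_{L^p}$ uniformly in $\varepsilon$ (this is exactly the mechanism that makes the transformation in \cite{LXY2} work, and the reason the lemma is only claimed for $p>1$). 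The reverse direction is symmetric: from $\hat\psi_n = \pa_y^{-1}\hh$ one bounds $\hat\psi_n$ and hence $\psi_n = H_s\hat\psi_n$ by $\|\widehat{\FW}_n\|_{L^p}$, then reads off $h_n, g_n, u_n, v_n$. The smallness hypothesis $\varrho(\bar M + \bar M^4)\le\delta_1$ is not needed for this lemma — only boundedness of the relevant coefficients, so all constants depend on $\bar M$ alone, as claimed by the notation $\sim_{\bar M}$.

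Next I would handle the three weighted equivalences. Each has the same flavour: the combination on the left already contains a term ($\vep^{1/4}\|\FW_n\|_{L^2}$, or $\vep^{1/2}\|\pa_y\FW_n\|_{L^2}$, or $\vep^{1/4}\|Z^{1/2}(\pa_y\FW_n, i\tn\FW_n)\|_{L^2}$) that absorbs precisely the "bad" contributions generated when derivatives hit the $Y$-dependent coefficients in \eqref{n-unknown}. For instance $\pa_y$ acting on $a_p(Y)\psi_n(y)$ produces $\varepsilon^{-1/2}\pa_Y a_p\,\psi_n$; multiplied by $Z^{1/2}(y)\sim y^{1/2}$ near the boundary and using $\pa_Y a_p = O(\bar M(1+Y)^{-3})$, the factor $\varepsilon^{-1/2}\cdot y^{1/2} = \varepsilon^{-1/4}Y^{1/2}$ is compensated by writing $Z^{1/2}\varepsilon^{-1/2}\pa_Y a_p\,\psi_n \lesssim \bar M\,\varepsilon^{-1/4}(1+Y)^{-5/2}\cdot(\psi_n/y^{1/2})$ and applying the weighted Hardy inequality of Lemma \ref{lmw1} (or its elementary version) to $\psi_n = \pa_y^{-1}h_n$; the resulting bound is $\lesssim_{\bar M}\|Z^{1/2}h_n\|_{L^2} + \vep^{1/4}\|h_n\|_{L^2}$, which is exactly of the form appearing on the right. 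I expect the \textbf{main obstacle} to be precisely this bookkeeping in the weighted, $\varepsilon$-scaled estimates: one must verify that every time a $Y$-derivative lands on $a_p, b_p, H_s$ or $1/H_s$, the spatial decay $(1+Y)^{-3}$ supplied by \eqref{A3} beats the $\varepsilon^{-1/2}$ (and, for second-order pieces, $\varepsilon^{-1}$) produced by the change of variables, after accounting for the weight $Z^{1/2}\sim y^{1/2}$ and the Hardy gain from $\pa_y^{-1}$. Once the pointwise weighted bounds on $\psi_n$, $\pa_y\psi_n$ and $\pa_y^2\psi_n$ in terms of $(h_n,g_n)$ (resp. $\hh,\hg$) are in hand, the three equivalences follow by collecting terms and using $|H_s|\sim 1$, $\|i\tn\FW_n\|$-type terms being handled identically since $\hg = -i\tn\hat\psi_n$ and $g_n = H_s\hg$. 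I would defer the full computation to the Appendix, as the authors indicate.
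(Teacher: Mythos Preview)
Your proposal is correct and matches the paper's approach: the proof in the Appendix also treats $\hh$ as the representative component, uses the classical Hardy inequality on $\psi_n=\pa_y^{-1}h_n$ (not the weighted version of Lemma~\ref{lmw1}, which is reserved for the commutator estimates later), and extracts the $\vep^{1/4}$ gain in the weighted norms from $Z^{1/2}\sim y^{1/2}$ via the identity $y^{3/2}\vep^{-1/2}=\vep^{1/4}Y^{3/2}$ combined with the $(1+Y)^{-3}$ decay of $b_p$ and $\pa_Ya_p$. Your illustrative computation has a small arithmetic slip---the correct scaling is $Z^{1/2}\vep^{-1/2}b_p\psi_n \le C\,\vep^{+1/4}\,\|Y^{3/2}b_p\|_{L^\infty_Y}\,|y^{-1}\psi_n|$, yielding $\vep^{+1/4}$ rather than $\vep^{-1/4}$---but the overall strategy and the identification of the Hardy mechanism as the key point are exactly right.
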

Next, the following  lemma states that the coefficient matrices and vectors in the system \eqref{pr-new-n} are of $O(1)$.
\begin{lemma}\label{lm2.1}
	There exists a positive constant $C$ independent of $\vep$, such that 
	\begin{align}\label{R}
	\begin{aligned}
	&\|(1+Y)\bm{A}_\FU\|_{L^\infty_Y}
	+\|(1+Y)\bm{B}_{\FU}\|_{L^\infty_Y}\leq C\b{M},\\
	&\|(1+Y)\bm{C}_\FU\|_{L^\infty_Y}+\|(1+Y)\bm{C}_\FH\|_{L^\infty_Y}+\|(1+Y)^2\FD_\FU\|_{L^\infty_Y}+\|(1+Y)^2\FD_\FH\|_{L^\infty_Y}\leq C\b{M}(1+\b{M}),\\ 
	&\|\FR_n\|_{L^2}+\vep^{-\f14}\|Z^{\f12}\FR_n\|_{L^2}\leq C(1+\b{M})\left(\|(\Ff_n,\Fq_n)\|_{L^2}+\vep^{-\f14}\|Z^{\f12}(\Ff_n,\Fq_n)\|_{L^2}\right).
	\end{aligned}
	\end{align}
\end{lemma}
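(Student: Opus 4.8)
The first two lines of \eqref{R} are pointwise in $Y$ and follow by substituting the explicit formulas \eqref{form-metrics}--\eqref{form-vec} and counting powers of $1+Y$. First I would record the elementary consequences of the structural hypotheses: from \eqref{A3}, $|\pa_Y^kU_s(Y)|+|\pa_Y^kH_s(Y)|\le\b{M}(1+Y)^{-3}$ for $1\le k\le 3$; from \eqref{A2}, $\underline{\gamma}\le|H_s|\le\b{\gamma}$, hence $|b_p(Y)|=|\pa_YH_s|/|H_s|\le\underline{\gamma}^{-1}\b{M}(1+Y)^{-3}$; and from \eqref{A4}, $a_p^2=U_s^2/H_s^2=1-G_s/H_s^2\le 1-\gamma_0/\b{\gamma}^2<1$, so $|a_p|\le1$ and $|U_s|\le\b{\gamma}$. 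Each entry of $\bm{A}_\FU$ and $\bm{B}_\FU$ is a fixed linear combination (with $\mu,\kappa$-dependent coefficients) of $\pa_YU_s$ and $a_p\pa_YH_s$, so after multiplying by $1+Y$ and using $|a_p|\le1$ one gets $\|(1+Y)\bm{A}_\FU\|_{L^\infty_Y}+\|(1+Y)\bm{B}_\FU\|_{L^\infty_Y}\le C\b{M}$. Each entry of $\bm{C}_\FU,\bm{C}_\FH,\FD_\FU,\FD_\FH$ is $|H_s|^{-1}$ or $|H_s|^{-2}$ times a fixed combination of terms of the three types $\pa_Y^j(\cdot)\,\pa_Y^l(\cdot)$, $U_s\pa_Y^lH_s$ and $H_s\pa_Y^lU_s$; multiplying by $1+Y$ for $\bm{C}_\FU,\bm{C}_\FH$ (resp. $(1+Y)^2$ for $\FD_\FU,\FD_\FH$), each term of the first type is $O(\b{M}^2)$ and each of the other two is $O(\b{\gamma}\b{M})$, so dividing by $|H_s|\ge\underline{\gamma}$ gives the claimed bound $C\b{M}(1+\b{M})$, with $C$ depending only on $\mu,\kappa,\underline{\gamma},\b{\gamma}$.

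For the source term I would split $\FR_n$, given by \eqref{form-source-n}, into a regular part and a singular part. The regular part collects $f_{1,n},f_{2,n}$, the terms $\tfrac{\mu}{\kappa}a_pq_{1,n},\tfrac{\mu}{\kappa}a_pq_{2,n}$ and $q_{1,n}/H_s,q_{2,n}/H_s$; since $|a_p|\le1$ and $|H_s|^{-1}\le\underline{\gamma}^{-1}$, these are bounded pointwise by a constant multiple of $|(\Ff_n,\Fq_n)|$, so their contribution to $\|\FR_n\|_{L^2}+\vep^{-\f14}\|Z^{\f12}\FR_n\|_{L^2}$ is at most $C\big(\|(\Ff_n,\Fq_n)\|_{L^2}+\vep^{-\f14}\|Z^{\f12}(\Ff_n,\Fq_n)\|_{L^2}\big)$. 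The singular part is a sum of terms of the form $S_n(y):=\vep^{-\f12}\Phi(y/\sqrt{\vep})\,\pa_y^{-1}q_{1,n}(y)$, where $\Phi$ stands for either $\tfrac{1}{H_s}\big(\tfrac{\mu}{\kappa}a_p\pa_YH_s-\pa_YU_s\big)$ or $b_p/H_s$; by the bounds above, $|\Phi(Y)|\le C\b{M}(1+Y)^{-3}$ in both cases.

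The core of the argument is the estimate of $S_n$, where the boundary-layer decay of $\Phi$ must be combined with Hardy's inequality. For the unweighted norm, $\vep^{-\f12}|\Phi(y/\sqrt{\vep})|\,y=|\Phi(Y)|\,Y\le C\b{M}\,Y(1+Y)^{-3}\le C\b{M}$, so $|S_n(y)|\le C\b{M}\,|\pa_y^{-1}q_{1,n}(y)|/y$, and the classical Hardy inequality $\|\pa_y^{-1}q_{1,n}/y\|_{L^2}\le2\|q_{1,n}\|_{L^2}$ gives $\|S_n\|_{L^2}\le C\b{M}\|q_{1,n}\|_{L^2}$. For the weighted norm the point is to recover the gain of $\vep^{\f14}$. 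I would split $\R_+=[0,2]\cup[2,\infty)$, use $Z(y)\le C_0y$ on $[0,2]$ and $Z\equiv\b{Z}$ on $[2,\infty)$ by \eqref{z1}, together with $|\pa_y^{-1}q_{1,n}(y)|^2\le y\|q_{1,n}\|_{L^2}^2$. On $[0,2]$ the change of variables $y=\sqrt{\vep}\,Y$ gives $\int_0^2 Z(y)\vep^{-1}|\Phi(y/\sqrt{\vep})|^2|\pa_y^{-1}q_{1,n}|^2\,dy\le C_0\vep^{-1}\|q_{1,n}\|_{L^2}^2\,\vep^{\f32}\int_0^\infty Y^2|\Phi(Y)|^2\,dY\le C\b{M}^2\vep^{\f12}\|q_{1,n}\|_{L^2}^2$, the $Y$-integral being finite thanks to $|\Phi(Y)|\le C\b{M}(1+Y)^{-3}$; on $[2,\infty)$, using $|\Phi(y/\sqrt{\vep})|\le C\b{M}\vep^{\f32}y^{-3}$ one gets a bound $\le C\b{M}^2\vep^2\|q_{1,n}\|_{L^2}^2\le C\b{M}^2\vep^{\f12}\|q_{1,n}\|_{L^2}^2$. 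Hence $\|Z^{\f12}S_n\|_{L^2}\le C\b{M}\vep^{\f14}\|q_{1,n}\|_{L^2}$, i.e. $\vep^{-\f14}\|Z^{\f12}S_n\|_{L^2}\le C\b{M}\|q_{1,n}\|_{L^2}$. Adding the regular and singular contributions and using $\|q_{1,n}\|_{L^2}\le\|\Fq_n\|_{L^2}$ gives the last line of \eqref{R}. I expect this weighted estimate of $S_n$ to be the only genuine obstacle: it forces one to use the full $(1+Y)^{-3}$ decay of the boundary-layer coefficients, since a cruder bound such as only $|\Phi(Y)|Y\lesssim\b{M}$ would cost an extra $\vep^{-\f14}$ (or force the logarithmic weights of Lemmas \ref{lmw1}--\ref{lmw2}); everything else is routine bookkeeping.
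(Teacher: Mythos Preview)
Your proposal is correct and follows essentially the same approach as the paper: the matrix bounds are read off directly from \eqref{form-metrics}--\eqref{form-vec} and \eqref{A2}--\eqref{A4}, and for $\FR_n$ the only nontrivial contribution is the term $\vep^{-1/2}\Phi(Y)\pa_y^{-1}q_{1,n}$, which in both treatments is handled via $|\Phi(Y)|Y\lesssim\b{M}$ together with Hardy's inequality for the unweighted norm. For the weighted norm the paper takes a slightly shorter route than your region-splitting argument: it uses the global bound $Z(y)\le C_0 y$ from \eqref{z1} to write
\[
Z^{1/2}(y)\,\vep^{-1/2}|\Phi(Y)|\,|\pa_y^{-1}q_{1,n}|
=\vep^{1/4}\sqrt{\tfrac{Z(y)}{y}}\cdot Y^{3/2}|\Phi(Y)|\cdot y^{-1}|\pa_y^{-1}q_{1,n}|,
\]
and then bounds the three factors by $\|\sqrt{Z/y}\|_{L^\infty}$, $\|Y^{3/2}\Phi\|_{L^\infty_Y}\le C\b{M}$, and Hardy respectively --- this avoids the split into $[0,2]\cup[2,\infty)$ and the pointwise Cauchy--Schwarz bound on $\pa_y^{-1}q_{1,n}$. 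A small remark: neither argument actually requires the full $(1+Y)^{-3}$ decay you mention at the end; any decay strictly faster than $(1+Y)^{-3/2}$ would suffice for the weighted estimate.
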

\begin{proof} We sketch the proof by showing the
	 estimate on $\FR_n$ because other estimates follow directly from \eqref{A3} and the formulation \eqref{form-metrics}, \eqref{form-vec}. According to the expression in \eqref{form-source-n},  we treat the term $\varepsilon^{-\f12}b_p\pa_y^{-1}q_{1,n}$ as an example. First by \eqref{A3} and Hardy inequality, it holds that
	$$
	\begin{aligned}
	\|\varepsilon^{-\f12}b_p\pa_y^{-1}q_{1,n}\|_{L^2}&\leq \|Yb_p\|_{L^\infty_Y}\|y^{-1}\pa_y^{-1}q_{1,n}\|_{L^2}\leq\b{M}\|q_{1,n}\|_{L^2},
	\end{aligned}
	$$
	and by \eqref{z1}, 
	$$
	\begin{aligned}
	&\left\|Z^{\f12}\left(\varepsilon^{-\f12}b_p\pa_y^{-1}q_{1,n}\right)\right\|_{L^2}\leq\vep^{\f14}\left\|\sqrt{\frac{Z(y)}{y}}\right\|_{L^\infty}\|Y^{\f32}b_p\|_{L^\infty_Y}\|y^{-1}\pa_y^{-1}q_{1,n}\|_{L^2}\leq  	C\b{M}\vep^{\f14}\|q_{1,n}\|_{L^2}.
	\end{aligned}
	$$
Hence, we obtain the  estimate on $\FR_{n}$. 
\end{proof}

We are now ready  to establish the uniform-in-$\varepsilon$ estimate on  $\widehat{\FW}_n$ through \eqref{pr-new-n}. As the first step, the following lemma gives  the $L^2$-estimate on the full derivatives of $\widehat{\FW}_n$.

\begin{lemma}\label{prop2.3}
 Let $\widehat{\FW}_n$ be the $H^1$-solution of the linear problem \eqref{pr-new-n}. 
 There exists a positive constant
 $C_3$ independent of $\vep$, $\tn$ and $\b{M}$, such that
	\begin{align}\label{2.6}
	\begin{aligned}
	\sqrt{\vep}\left(\|\pa_y\widehat{\FW}_n\|_{L^2}+|\tn|\|\widehat{\FW}_n\|_{L^2}\right)\leq C_3\b{M}^{\f12}(1+\b{M}^{\f12})\|\widehat{\FW}_n\|_{L^2}+C_3\|\FR_n\|^{\f12}_{L^2}\|\widehat{\FW}_n\|_{L^2}^{\f12}.
	\end{aligned}\end{align}	
\end{lemma}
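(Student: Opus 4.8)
The plan is to run a direct weighted energy estimate on the good-unknown system \eqref{pr-new-n}: test the first equation of \eqref{pr-new-n} against $\overline{\widehat{\FU}_n}$, test the second equation against $G_s\overline{\widehat{\FH}_n}$ (with $G_s=G_s(y/\sqrt\vep)$), integrate over $\R_+$, add the two identities and take real parts. The weight $G_s$ in the second multiplier is forced by two requirements. First, it makes the two ``large transport'' terms cancel: the contribution $-i\tn\langle G_s\widehat{\FH}_n,\widehat{\FU}_n\rangle$ coming from the $\widehat{\FU}_n$-equation and the contribution $-i\tn\langle\widehat{\FU}_n,G_s\widehat{\FH}_n\rangle$ coming from the $\widehat{\FH}_n$-equation are complex conjugates of one another up to a sign, so their real parts add up to zero. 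Second, since $G_s(0)\ne0$ is merely a positive constant and, by \eqref{A4}, $G_s\geq\gamma_0>0$ everywhere, multiplying the second equation by $G_s$ destroys neither the boundary conditions nor the coercivity of its diffusion term $\kappa\vep\int_0^\infty G_s\big(|\pa_y\widehat{\FH}_n|^2+\tn^2|\widehat{\FH}_n|^2\big)\,dy\geq\kappa\gamma_0\vep\big(\|\pa_y\widehat{\FH}_n\|_{L^2}^2+\tn^2\|\widehat{\FH}_n\|_{L^2}^2\big)$.

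Carrying this out, the integrations by parts in the diffusion terms produce no boundary contribution at $y=0$: for the $\widehat{\FU}_n$-equation because $\widehat{\FU}_n|_{y=0}=\mathbf 0$, and for the $\widehat{\FH}_n$-equation because the boundary term is $G_s(0)\big(\pa_y\hat h_n\,\overline{\hat h_n}+\pa_y\hat g_n\,\overline{\hat g_n}\big)|_{y=0}$, which vanishes thanks to $\pa_y\hat h_n|_{y=0}=0$ and $\hat g_n|_{y=0}=0$; the contributions at $y=+\infty$ vanish since $\widehat{\FW}_n\in H^1$ (for full rigor one first notes that the linear problem with $L^2$ source possesses an $H^2_{loc}$ solution with the required decay, or one argues by approximation). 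The pressure gradient tested against $\overline{\widehat{\FU}_n}$ disappears after one integration by parts via the divergence-free relation $i\tn\hat u_n+\pa_y\hat v_n=0$, with no boundary term since $\hat v_n|_{y=0}=0$, and the term $i\tn(1+\mu/\kappa)\int U_s|\widehat{\FU}_n|^2$ is purely imaginary. For the remaining lower order terms I would invoke the bounds in Lemma \ref{lm2.1}: the coefficients $\bm C_\FU\widehat{\FH}_n$, $\bm C_\FH\widehat{\FH}_n$ (and the $\vep^{-\f12}\hat\psi_n\FD$ terms treated below) are bounded directly by $C\b{M}(1+\b{M})\|\widehat{\FW}_n\|_{L^2}^2$; the terms carrying one derivative or one factor $\tn$ of the unknown, namely $i\tn\vep^{\f12}\bm A_\FU\widehat{\FH}_n$, $\vep^{\f12}\bm B_\FU\pa_y\widehat{\FH}_n$, $2\kappa\vep^{\f12}b_p\pa_y\widehat{\FH}_n$, and the commutator $\kappa\vep^{\f12}\int G_s'(Y)\pa_y\widehat{\FH}_n\cdot\overline{\widehat{\FH}_n}$ generated by $\pa_yG_s=\vep^{-\f12}G_s'(Y)$, are handled by Cauchy--Schwarz and Young's inequality, each yielding $\delta\vep\big(\|\pa_y\widehat{\FW}_n\|_{L^2}^2+\tn^2\|\widehat{\FW}_n\|_{L^2}^2\big)+C_\delta\b{M}^2\|\widehat{\FW}_n\|_{L^2}^2$; choosing $\delta$ small, independently of $\vep,\tn,\b{M}$, lets the first parts be absorbed by the left-hand side.

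The one point needing some care is the singular term $\vep^{-\f12}\hat\psi_n\FD_\FU$ (and likewise $\vep^{-\f12}\hat\psi_n\FD_\FH$): here I would use $\hat\psi_n=\pa_y^{-1}\hat h_n$ together with the extra decay $\|(1+Y)^2\FD_\FU\|_{L^\infty_Y}+\|(1+Y)^2\FD_\FH\|_{L^\infty_Y}\leq C\b{M}(1+\b{M})$ from Lemma \ref{lm2.1}; since $\vep^{-\f12}(1+Y)^{-2}=\sqrt\vep/(\sqrt\vep+y)^2\leq 1/(4y)$, one gets $\vep^{-\f12}|\hat\psi_n\FD_\FU|\leq C\b{M}(1+\b{M})\,y^{-1}|\pa_y^{-1}\hat h_n|$, and the Hardy inequality $\|y^{-1}\pa_y^{-1}\hat h_n\|_{L^2}\leq 2\|\hat h_n\|_{L^2}$ converts this into $C\b{M}(1+\b{M})\|\widehat{\FW}_n\|_{L^2}^2$; the source $\FR_n$ tested against $(\overline{\widehat{\FU}_n},G_s\overline{\widehat{\FH}_n})$ contributes at most $C\|\FR_n\|_{L^2}\|\widehat{\FW}_n\|_{L^2}$. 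Collecting everything, absorbing the $\delta$-terms with the help of $G_s\geq\gamma_0$, and using $\b{M}^2+\b{M}(1+\b{M})\leq C\b{M}(1+\b{M})$, one arrives at $\vep\big(\|\pa_y\widehat{\FW}_n\|_{L^2}^2+\tn^2\|\widehat{\FW}_n\|_{L^2}^2\big)\leq C\b{M}(1+\b{M})\|\widehat{\FW}_n\|_{L^2}^2+C\|\FR_n\|_{L^2}\|\widehat{\FW}_n\|_{L^2}$; taking square roots and noting $\sqrt{\b{M}(1+\b{M})}\leq\b{M}^{\f12}(1+\b{M}^{\f12})$ gives \eqref{2.6}. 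I expect the main obstacle to be precisely the design of the multiplier $G_s\overline{\widehat{\FH}_n}$: it must simultaneously cancel the $O(\tn)$ cross terms, preserve the coercivity of the magnetic diffusion, and—above all—not generate an uncontrollable boundary term at $y=0$, which is exactly where the mixed boundary condition $(\widehat{\FU}_n,\pa_y\hat h_n,\hat g_n)|_{y=0}=\mathbf 0$ and the non-degeneracy \eqref{A4} are both essential; controlling the $\vep^{-\f12}$-singular $\FD$-terms through the decay of $\FD$ combined with Hardy's inequality is the secondary delicate point.
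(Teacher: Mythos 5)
Your proposal is correct and follows essentially the same route as the paper: test the $\widehat{\FU}_n$-equation against $\widehat{\FU}_n$ and the $\widehat{\FH}_n$-equation against $G_s\widehat{\FH}_n$, take real parts so that the $O(\tn)$ cross terms cancel and the pressure drops out, and control the lower-order and $\vep^{-\f12}\hat\psi_n\FD$ terms via Lemma \ref{lm2.1}, Cauchy--Schwarz and Hardy's inequality before absorbing the small $\vep$-weighted pieces into the coercive diffusion. The only cosmetic difference is that you justify the $\FD$-terms with the $(1+Y)^{-2}$ decay and an AM--GM bound, whereas the paper uses a single factor $(1+Y)^{-1}$ with the same Hardy step; both give the same estimate.
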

\begin{proof}
We take inner product of the first equality for $\widehat{\FU}_n$ in \eqref{pr-new-n} with $\widehat{\FU}_n$, and the second equality for $\widehat{\FH}_n$ in \eqref{pr-new-n} with $\displaystyle G_s(\frac{y}{\sqrt{\varepsilon}})\widehat{\FH}_n$ respectively, and then take the summation
of these two equations.  The real part of the final equation gives
\begin{align}\label{L2_y-0}
\begin{aligned}
&\text{Re}\left\langle i\tn\vep^{\f12} \bm{A}_\FU\widehat{\FH}_n+\vep^{\f12}\bm{B}_\FU\pa_y \widehat{\FH}_n+\bm{C}_\FU\widehat{\FH}_n+\vep^{-\f12}\hat{\psi}_n\FD_\FU,~{\widehat{\FU}}_n\right\rangle-\text{Re}\left\langle \mu\vep (\pa_y^2-\tn^2) \widehat{\FU}_n,~{\widehat{\FU}}_n\right\rangle\\
&+\text{Re}\left\langle -2\kappa\vep^{-\f12}b_p\pa_y\widehat{\FH}_n+\bm{C}_\FH\widehat{\FH}_n+\vep^{-\f12}\hat{\psi}_n\FD_\FH,~G_s{\widehat{\FH}}_n\right\rangle-\text{Re}\left\langle\kappa \vep(\pa_y^2-\tn^2)\widehat{\FH}_n ,~G_s{\widehat{\FH}}_n\right\rangle\\
&=\text{Re}\left\langle \FR_{\FU,n},~{\widehat{\FU}}_n\right\rangle+\text{Re}\left\langle \FR_{\FH,n},~G_s{\widehat{\FH}}_n\right\rangle.
\end{aligned}\end{align}
Here we have used the fact 
$$\left\langle (i\tn p_{n},\pa_yp_n)^{T},~ \widehat{\FU}_n\right\rangle=0,$$ 
which follows from the integration by parts, divergence-free condition $i\tn \hat{u}_n+\pa_y\hat{v}_n=0$ and the boundary condition $\hat{v}_n|_{y=0}=0$.

 Next we estimate terms in  \eqref{L2_y-0}. For the diffusion terms, by integration by parts and the boundary condition $\widehat{\FU}_n|_{y=0}=(\pa_y\hat{h}_n,\hat{g}_n)|_{y=0}=\mathbf{0}$, we write
 \begin{align*}
 	\begin{aligned}
 	-\text{Re}\left\langle \mu\vep (\pa_y^2-\tn^2) \widehat{\FU}_n,~{\widehat{\FU}}_n\right\rangle=\mu\vep\left(\tn^2\|\widehat{\FU}_n\|_{L^2}^2+\|\pa_y\widehat{\FU}_n\|_{L^2}^2\right).
 	\end{aligned}
 \end{align*}
 Then by using \eqref{A3} and \eqref{A4}, we have 
 \begin{align*}
 \begin{aligned}
 -\text{Re}\left\langle\kappa \vep(\pa_y^2-\tn^2)\widehat{\FH}_n ,~G_s{\widehat{\FH}}_n\right\rangle&\geq \gamma_0\kappa\vep\left(\tn^2\|\widehat{\FH}_n\|_{L^2}^2+\|\pa_y\widehat{\FH}_n\|_{L^2}^2\right)+\kappa \vep^{\f12
 }\text{Re}\left\langle\pa_y\widehat{\FH}_n ,~\pa_yG_s{\widehat{\FH}}_n\right\rangle\\
&\geq\frac{\gamma_0\kappa}{2}\vep\left(\tn^2\|\widehat{\FH}_n\|_{L^2}^2+\|\pa_y\widehat{\FH}_n\|_{L^2}^2\right)-C\bar{M}^2\|\widehat{\FH}_n\|_{L^2}^2.
 \end{aligned}
 \end{align*}
By Cauchy-Schwarz inequality and the bound given in Lemma \ref{lm2.1}, it holds that
\begin{align*}
\begin{aligned}
&\left|\left\langle i\tn\vep^{\f12} \bm{A}_\FU\widehat{\FH}_n+\vep^{\f12}\bm{B}_\FU\pa_y \widehat{\FH}_n+\bm{C}_\FU\widehat{\FH}_n,~{\widehat{\FU}}_n\right\rangle\right|\\
&\leq\left[\vep^{\f12}\left(|\tn|\|\bm{A}_\FU\|_{L^\infty_Y}\|\widehat{\FH}_n\|_{L^2}+\|\bm{B}_\FU\|_{L^\infty_Y}\|\pa_y\widehat{\FH}_n\|_{L^2}\right)+\|\bm{C}_\FU\|_{L_Y^\infty}\|\widehat{\FH}_n\|_{L^2}\right]\|\widehat{\FU}_n\|_{L^2}\\
&\leq\frac{\gamma_0\kappa}{8}\vep\left(\tn^2\|\widehat{\FH}_n\|_{L^2}^2+\|\pa_y\widehat{\FH}_n\|_{L^2}^2\right)+C\bar{M}(1+\bar{M})\|\widehat{\FW}_n\|_{L^2}^2.
\end{aligned}\end{align*}
By using $\hat{\psi}_{n}=\pa_y^{-1}\hat{h}_n$ and Hardy inequality, one has 
$\|y^{-1}\hat{\psi}_n\|_{L^2}\lesssim\|\hat{h}_n\|_{L^2},$ 
thus it follows from the bound on $\FD_\FU$ given in Lemma \ref{lm2.1} that
\begin{align*}
	\left|\left\langle \vep^{-\f12}\hat{\psi}_n\FD_\FU ,~{\widehat{\FU}}_n\right\rangle\right|\leq\|Y\FD_\FU\|_{L^\infty_Y}\|y^{-1}\hat{\psi}_n\|_{L^2}\|\widehat{\FU}_n\|_{L^2}\leq C\b{M}(1+\b{M})\|\widehat{\FW}_n\|_{L^2}^2.
\end{align*}
Similarly, one has
\begin{align*}
	\begin{aligned}
	&\left|\left\langle -2\kappa\vep^{-\f12}b_p\pa_y\widehat{\FH}_n+\bm{C}_\FH\widehat{\FH}_n+\vep^{-\f12}\hat{\psi}_n\FD_\FH,~G_s{\widehat{\FH}}_n\right\rangle \right|\leq \frac{\gamma_0\kappa}{8}\vep\|\pa_y\widehat{\FH}_n\|_{L^2}^2+C\bar{M}(1+\bar{M})\|\widehat{\FW}_n\|_{L^2}^2.
	\end{aligned}
\end{align*}
Also, it is easy to obtain
\begin{align*}
	\left|\left\langle \FR_{\FU,n},~{\widehat{\FU}}_n\right\rangle\right|+\left|\left\langle \FR_{\FH,n},~G_s{\widehat{\FH}}_n\right\rangle\right|\leq C\|\FR_n\|_{L^2}\|\widehat{\FW}_n\|_{L^2}.
\end{align*}
Plugging the above  estimates into \eqref{L2_y-0} yields
\begin{align*}
		\vep\left(\|\pa_y\widehat{\FW}_n\|_{L^2}^2+|\tn|^2\|\widehat{\FW}_n\|_{L^2}^2\right)\leq C\b{M}(1+\b{M})\|\widehat{\FW}_n\|_{L^2}^2+C\|\FR_n\|_{L^2}\|\|\widehat{\FW}_n\|_{L^2},
\end{align*} 
which implies the  estimate \eqref{2.6} and this completes the proof of the lemma.
\end{proof}

Next we establish a uniform-in-$\vep$ $L^2-$estimate on the velocity field $\widehat{\FU}_n$. 
\begin{lemma}\label{prop2.4}
	There exists a positive constant $C_4$ independent of $\vep$, $\tn$ and $\bar{M}$, such that
	\begin{align}\label{2.9}
	\begin{aligned}
	|\tn|^{\f12}\big\|\widehat{\FU}_n\big\|_{L^2}\leq&C_4\bar{M}^{\f12}(1+\b{M}^{\f12})\big\|\widehat{\FW}_n\big\|_{L^2}+C_4\|\FR_{n}\|_{L^2}^{\f12}\big\|\widehat{\FW}_n\big\|_{L^2}^{\f12}.
	\end{aligned}\end{align}
\end{lemma}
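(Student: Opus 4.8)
The plan is to extract the quantity $|\tn|\,\|\widehat{\FU}_n\|_{L^2}$ from the off-diagonal coupling term $-i\tn\widehat{\FU}_n$ in the \emph{second} equation of \eqref{pr-new-n} (the equation for $\widehat{\FH}_n$) by pairing that equation with the natural multiplier $\widehat{\FU}_n$ and taking the imaginary part. Since $\text{Im}\langle -i\tn\widehat{\FU}_n,\widehat{\FU}_n\rangle=-\tn\|\widehat{\FU}_n\|_{L^2}^2$, this gives
$$|\tn|\,\|\widehat{\FU}_n\|_{L^2}^2=\Big|\text{Im}\big\langle 2\ka\vep^{\f12}b_p\pa_y\widehat{\FH}_n-\bm{C}_{\FH}\widehat{\FH}_n-\vep^{-\f12}\hat\psi_n\FD_{\FH}+\ka\vep(\pa_y^2-\tn^2)\widehat{\FH}_n+\FR_{\FH,n},\,\widehat{\FU}_n\big\rangle\Big|,$$
and it remains to bound the right-hand side. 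The decisive structural point -- and the reason this multiplier is admissible in the degenerate steady regime while the symmetric choice $\widehat{\FH}_n$ is not -- is that $\widehat{\FU}_n|_{y=0}=\mathbf 0$: integrating by parts in the diffusion term yields $-\ka\vep\langle\pa_y^2\widehat{\FH}_n,\widehat{\FU}_n\rangle=\ka\vep\langle\pa_y\widehat{\FH}_n,\pa_y\widehat{\FU}_n\rangle$, the boundary term $\pa_y\widehat{\FH}_n\,\overline{\widehat{\FU}_n}\big|_{y=0}$ vanishing, in contrast to the uncontrollable contribution $\hat h_n\pa_y\hat u_n|_{y=0}$ flagged in the introduction.

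For the remaining terms I would use Cauchy--Schwarz together with Lemma \ref{lm2.1}: the term $\bm{C}_{\FH}\widehat{\FH}_n$ is handled by $\|\bm{C}_{\FH}\|_{L^\infty_Y}\lesssim\bar M(1+\bar M)$; for $\vep^{-\f12}\hat\psi_n\FD_{\FH}$ I would write $\vep^{-\f12}\hat\psi_n\FD_{\FH}=(Y\FD_{\FH})(y^{-1}\hat\psi_n)$, use $\hat\psi_n=\pa_y^{-1}\hat h_n$ and the Hardy inequality $\|y^{-1}\hat\psi_n\|_{L^2}\lesssim\|\hat h_n\|_{L^2}$ exactly as in the proof of Lemma \ref{prop2.3}; the term $2\ka\vep^{\f12}b_p\pa_y\widehat{\FH}_n$ is bounded by $C\bar M\vep^{\f12}\|\pa_y\widehat{\FH}_n\|_{L^2}\|\widehat{\FU}_n\|_{L^2}$ using $\|b_p\|_{L^\infty_Y}\lesssim\bar M$ (from \eqref{A2}--\eqref{A3}); and $\langle\FR_{\FH,n},\widehat{\FU}_n\rangle$ by $\|\FR_n\|_{L^2}\|\widehat{\FW}_n\|_{L^2}$. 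Collecting these and using $\|\widehat{\FU}_n\|_{L^2}\le\|\widehat{\FW}_n\|_{L^2}$ gives
$$|\tn|\,\|\widehat{\FU}_n\|_{L^2}^2\lesssim \vep^{\f12}\bar M\|\pa_y\widehat{\FW}_n\|_{L^2}\|\widehat{\FW}_n\|_{L^2}+\bar M(1+\bar M)\|\widehat{\FW}_n\|_{L^2}^2+\vep\|\pa_y\widehat{\FW}_n\|_{L^2}^2+\vep\tn^2\|\widehat{\FW}_n\|_{L^2}^2+\|\FR_n\|_{L^2}\|\widehat{\FW}_n\|_{L^2}.$$

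To close the estimate I would insert Lemma \ref{prop2.3}: both $\vep^{\f12}\|\pa_y\widehat{\FW}_n\|_{L^2}$ and $\vep^{\f12}|\tn|\|\widehat{\FW}_n\|_{L^2}$ are dominated by $C_3\bar M^{\f12}(1+\bar M^{\f12})\|\widehat{\FW}_n\|_{L^2}+C_3\|\FR_n\|_{L^2}^{\f12}\|\widehat{\FW}_n\|_{L^2}^{\f12}$ via \eqref{2.6}. Substituting and expanding the squares reduces the bound to
$$|\tn|\,\|\widehat{\FU}_n\|_{L^2}^2\lesssim \bar M^{\f32}(1+\bar M^{\f12})\|\widehat{\FW}_n\|_{L^2}^2+\bar M\|\FR_n\|_{L^2}^{\f12}\|\widehat{\FW}_n\|_{L^2}^{\f32}+\|\FR_n\|_{L^2}\|\widehat{\FW}_n\|_{L^2},$$
and one application of Young's inequality to the middle term (splitting it as $\tfrac12\|\FR_n\|_{L^2}\|\widehat{\FW}_n\|_{L^2}+\tfrac12\bar M^2\|\widehat{\FW}_n\|_{L^2}^2$), together with $\bar M^{\f32}+\bar M^2\lesssim\bar M(1+\bar M^{\f12})^2$, followed by taking square roots, yields \eqref{2.9} with a constant $C_4$ of the claimed form. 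I do not anticipate any genuine obstacle here: the only care required is bookkeeping the powers of $\bar M$ so that the final constant comes out as $\bar M^{\f12}(1+\bar M^{\f12})$, plus a passing remark that the $\widehat{\FH}_n$-equation forces $\widehat{\FH}_n\in H^2_{loc}$ once $\FR_n\in L^2$, which legitimizes the integration by parts above.
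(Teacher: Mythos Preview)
Your proposal is correct and follows essentially the same approach as the paper: pair the $\widehat{\FH}_n$-equation with $\widehat{\FU}_n$, exploit the boundary condition $\widehat{\FU}_n|_{y=0}=\mathbf 0$ to integrate the diffusion term by parts without boundary contribution, bound the lower-order terms via Cauchy--Schwarz, Hardy, and Lemma~\ref{lm2.1}, and then invoke Lemma~\ref{prop2.3} to close. The paper handles the $\bar M$ bookkeeping slightly more cleanly by absorbing $2\ka\vep^{1/2}b_p\pa_y\widehat{\FH}_n$ directly into $C\vep\|\pa_y\widehat{\FH}_n\|_{L^2}^2+C\bar M^2\|\widehat{\FW}_n\|_{L^2}^2$ via Young before applying \eqref{2.6}, which avoids the intermediate $\bar M\|\FR_n\|_{L^2}^{1/2}\|\widehat{\FW}_n\|_{L^2}^{3/2}$ term, but your route arrives at the same conclusion.
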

\begin{proof}
	From the second equation for $\widehat{\FH}_n$ in \eqref{pr-new-n}, we write
	\begin{align*}
		\displaystyle	-i\tn\widehat{\FU}_n=\ka\vep(\pa_y^2-\tn^2)\widehat{\FH}_n+2\kappa\varepsilon^{\f12}b_p\partial_y\widehat{\FH}_n-\bm{C}_{\FH}\widehat{\FH}_n-\varepsilon^{-\f12}\hat{\psi}_n\FD_{\FH}+\FR_{\FH,n}.
	\end{align*}
Then, taking inner product of the above equality with $-\widehat{\FU}_n$ yields 
\begin{align}\label{L2_u-0}
\begin{aligned}
i\tn\big\|\widehat{\FU}_n\big\|_{L^2}^2=&-\kappa\vep \left\langle(\pa_y^2-\tn^2)\widehat{\FH}_n,~\widehat{\FU}_n\right\rangle+\left\langle 2\kappa\varepsilon^{\f12}b_p\partial_y\widehat{\FH}_n-\bm{C}_{\FH}\widehat{\FH}_n-\varepsilon^{-\f12}\hat{\psi}_n\FD_{\FH},~\widehat{\FU}_n\right\rangle\\
&+\left\langle\FR_{\FH,n},~\widehat{\FU}_n\right\rangle.
\end{aligned}\end{align}
We estimate the right-hand side of \eqref{L2_u-0} term by term. First, by integration by part and the boundary condition $\widehat{\FU}_n|_{y=0}=\mathbf{0}$,  it is easy to get
\begin{align*}
	\kappa\vep \left|\left\langle(\pa_y^2-\tn^2)\widehat{\FH}_n,~\widehat{\FU}_n\right\rangle\right|\leq \ka\vep\left(\big\|\pa_y\widehat{\FW}_n\big\|_{L^2}^2+\tilde{n}^2\big\|\widehat{\FW}_n\big\|_{L^2}^2\right).
\end{align*}
Second, it follows by Cauchy-Schwarz inequality and Hardy inequality that
\begin{align*}
\begin{aligned}
	&\left|\left\langle 2\kappa\varepsilon^{\f12}b_p\partial_y\widehat{\FH}_n-\bm{C}_{\FH}\widehat{\FH}_n-\varepsilon^{-\f12}\hat{\psi}_n\FD_{\FH},~\widehat{\FU}_n\right\rangle\right|\\
	&\leq\left(2\kappa\sqrt{\varepsilon}\|b_p\|_{L_Y^\infty}\|\pa_Y\widehat{\FH}_n\|_{L^2}+\|\bm{C}_{\FH}\|_{L^\infty_Y}\|\widehat{\FH}_n\|_{L^2}+\|Y\mathbf{D}_{\FH}\|_{L^\infty_Y}\|y^{-1}\hat{\psi}_{n}\|_{L^2}\right)\|\widehat{\FU}_n\|_{L^2}\\
	&\leq C\varepsilon\big\|\pa_Y\widehat{\FH}_n\big\|_{L^2}^2+C\bar{M}(1+\bar{M})\big\|\widehat{\FW}_n\big\|_{L^2}^2,
\end{aligned}\end{align*}
where we have used \eqref{R} in the  last inequality. Note that
\begin{align*}
	\left|\left\langle\FR_{\FH,n},~\widehat{\FU}_n\right\rangle\right|\leq \|\FR_{\FH,n}\|_{L^2}\big\|\widehat{\FU}_n\big\|_{L^2}.
\end{align*}
Hence, we apply the above three inequalities to \eqref{L2_u-0} and obtain
\begin{align*}
	|\tn|~\big\|\widehat{\FU}_n\big\|_{L^2}^2\leq C\varepsilon\left(\big\|\pa_y\widehat{\FW}_n\big\|_{L^2}^2+\tilde{n}^2\big\|\widehat{\FW}_n\big\|_{L^2}^2\right)+C\bar{M}(1+\bar{M})\big\|\widehat{\FW}_n\big\|_{L^2}^2+\|\FR_{\FH,n}\|_{L^2}\big\|\widehat{\FU}_n\big\|_{L^2}, 
\end{align*}
which, along with \eqref{2.6}, gives the estimate \eqref{2.9} and this completes the proof of the lemma.
\end{proof}

Next we turn to  the $L^2$-estimate of $\widehat{\FH}_n$. We point out  that if we estimate $\big\|\widehat{\FH}_n\big\|_{L^2}$ in a similar way as Lemma \ref{prop2.4}, a boundary term $\hat{h}_n\pa_y\hat{u}_n|_{y=0}$ appears due to the mix boundary condition \eqref{1.1-1}. Clearly, it is impossible
 to control this term with the low regularity of the solution. In order to overcome this difficulty, in what follows we turn to establish a weighted estimate on $\widehat{\FH}_n$ with the weight $Z^{\f12}(y)$. Notice that the function $Z(y)$ depends on the variable $y$. To avoid the commutator with pressure term $P$, we use the vorticity formulation. Let $\omega_u=\pa_y\hat{u}-\pa_x\hat{v}$ and $\omega_h=\pa_y\hat{h}-\pa_x\hat{g}$ be the vorticity of $(\hat{u},\hat{v})$ and $(\hat{h},\hat{g})$ respectively. We recall that $\hat{\psi}$ is the stream function of $\widehat{\FH}$ and denote by $\hat{\phi}$ the stream function of $\widehat{\FU}$, {\it i.e.},
$$\hat{\phi}_y=\hat{u},\quad -\hat{\phi}_x=\hat{v},\quad \hat{\phi}|_{y=0}=0.
$$
We denote respectively by $\omega_{u,n}$ and $\omega_{h,n}$ the $n$-th Fourier coefficients of $\omega_u$ and $\omega_h$. Similarly, the $n$-th Fourier coefficient of $\hat{\phi}$ 
is denoted by $\hat{\phi}_n$. 
That is, 
\begin{align*}
\begin{aligned}
	\omega_{u,n}=curl~\widehat{\FU}_n=\pa_y\hat{u}_n-i\tn\hat{v}_n=(\pa_y^2-\tn^2)\hat{\phi}_n,\quad
	\omega_{h,n}=curl~\widehat{\FH}_n=\pa_y\hat{h}_n-i\tn\hat{g}_n=(\pa_y^2-\tn^2)\hat{\psi}_n.
\end{aligned}\end{align*}
From the system \eqref{pr-new-n} for $\widehat{\FW}_n$, we use the second equation in \eqref{pr-new-n} to eliminate $\widehat{\FU}_n$ in the first equation. Then it holds
\begin{align}\label{vorticity-u}
\begin{aligned}
	&-i\tn G_s\widehat{\FH}_n+(i\tn p_n,\pa_yp_n)^T-\mu\vep(\pa_y^2-\tn^2)\widehat{\FU}_n-\varepsilon(\kappa+\mu) U_s(\pa_y^2-\tn^2)\widehat{\FH}_n\\
	&=\FR_{\FU,n}+\frac{\mu+\kappa}{\kappa}U_s\FR_{\FH,n}-i\tn\sqrt{\varepsilon}\bm{A}_{\FU}\widehat{\FH}_n-\sqrt{\varepsilon}\Big(\bm{B}_{\FU}-2(\kappa+\mu)a_p\pa_YH_s\bm{I}_{2}\Big)\pa_{y}\widehat{\FH}_n\\
	&\quad-\left(\bm{C}_{\FU}+\frac{\mu+\kappa}{\kappa}U_s\bm{C}_\FH\right)\widehat{\FH}_n-\varepsilon^{-\f12}\hat{\psi}_n\left(\FD_{\FU}+\frac{\mu+\kappa}{\kappa}U_s\FD_\FH\right)\\
	&\triangleq \widetilde{\FR}_{\FU,n},
\end{aligned}\end{align}
where $\bm{I}_{2}$ is the identity matrix of order 2.
By taking curl on the above equation and the second equation in \eqref{pr-new-n} respectively, we arrive at the following system for
$\omega_{u,n}$ and $\omega_{h,n}$:
\begin{equation}\label{2.11}\left\{
\begin{aligned}
-i\tilde{n}~curl\big(G_s\widehat{\FH}_n\big)-\vep\mu(\pa_y^2-\tilde{n}^2)\omega_{u,n}-\vep(\ka+\mu)~curl\left(U_s(\pa_y^2-\tn^2)\widehat{\FH}_n\right)&=curl~ \widetilde{\FR}_{\FU,n},\\
-i\tilde{n}\omega_{u,n}-\vep\ka(\pa_y^2-\tilde{n}^2)\omega_{h,n}&=curl~ \widetilde{\FR}_{\FH,n},
\end{aligned}\right.
\end{equation}
where 
$$
 \widetilde{\FR}_{\FH,n}=\FR_{\FH,n}+2\kappa\sqrt{\varepsilon}\pa_y\widehat{\FH}_n-\bm{C}_{\FH}\widehat{\FH}_n+\varepsilon^{-\f12}\hat{\psi}_n\FD_{\FH}.
$$

The weighted estimte on $\widehat{\FW}_n$ is given in the following lemma.

\begin{lemma}\label{prop2.5} For sufficient small $\vep$, there exists a positive constant $C_5$ independent of $\vep$, $\tn$ and $\b{M}$, such that for any $\eta>0$ and $\delta\geq0$, it holds that
	\begin{align}\label{2.12}
	\begin{aligned}
	|\tn|^{\f12}\|Z^{\f12}\widehat{\FW}_n\|_{L^2}\leq~&C_5 |\tn|^{-\f12}|\log\vep|^{1+\frac{\eta}{3}} \big\|Z^{\f12}{\FR}_{n}\big\|_{L^2}+C_5{\vep}^{\f14}\bar{M}^{\f12}\big\|{\FR}_{n}\big\|_{L^2}^{\f12}\big\|\widehat{\FW}_n\big\|_{L^2}^{\f12}\\
	&+C_5{\vep}^{\f14}(1+{\bar{M}}^{\f12})\big\|{\FR}_{n}\big\|_{L^2}^{\f14}\big\|\widehat{\FW}_n\big\|_{L^2}^{\f34}+C_5{\vep}^{\f14}
	\bar{M}^{\f14}(1+\bar{M}^{\f54})\big\|\widehat{\FW}_n\big\|_{L^2}.\\
	\end{aligned}\end{align}
\end{lemma}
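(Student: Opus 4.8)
The plan is to treat the two blocks of $\widehat{\FW}_n=(\widehat{\FU}_n,\widehat{\FH}_n)$ separately. For the weighted bound on the magnetic part I would work with the vorticity formulation \eqref{2.11} rather than \eqref{pr-new-n}, precisely so that the weight $Z(y)$ creates no commutator with the pressure, and test the first equation of \eqref{2.11} against $Z\hat{\psi}_n$, where $\hat{\psi}_n$ is the stream function of $\widehat{\FH}_n$ (so $\hat{h}_n=\pa_y\hat{\psi}_n$, $\hat{g}_n=-i\tn\hat{\psi}_n$), then take the imaginary part. Writing $\mathrm{curl}\,(G_s\widehat{\FH}_n)=\pa_y(G_s\pa_y\hat{\psi}_n)-\tn^2G_s\hat{\psi}_n$ and integrating by parts — all boundary contributions at $y=0$ vanishing because $Z(0)=0$ and $\hat{\psi}_n(0)=0$ — the term $-i\tn\,\mathrm{curl}\,(G_s\widehat{\FH}_n)$ produces the coercive quantity $\tn\int_0^\infty G_sZ\,|\widehat{\FH}_n|^2\,\dd y\geq\gamma_0\tn\|Z^{\f12}\widehat{\FH}_n\|_{L^2}^2$ (by \eqref{A4}) together with a lower order term proportional to $-\tn\int_0^\infty\pa_y(G_sZ')|\hat{\psi}_n|^2\,\dd y$.

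The construction of $Z$ in Section 2 enters here: since $G_sZ'\equiv1$ on $[0,1]$ by \eqref{z2}, the singular part of $\pa_y(G_sZ')$ is identically zero, and on $[1,\infty)$ the sign and size estimates \eqref{z4} together with Hardy's inequality $\|y^{-1}\hat{\psi}_n\|_{L^2}\lesssim\|\widehat{\FH}_n\|_{L^2}$ bound this lower order term by $C\bar{M}\vep\,\tn\,\|\widehat{\FW}_n\|_{L^2}^2$. The remaining terms of the identity are controlled as follows. The diffusion contributions $-\vep\mu(\pa_y^2-\tn^2)\omega_{u,n}$ and $-\vep(\kappa+\mu)\,\mathrm{curl}\big(U_s(\pa_y^2-\tn^2)\widehat{\FH}_n\big)$, after moving derivatives onto $Z\hat{\psi}_n$ (again no boundary terms, by $Z(0)=\hat{\psi}_n(0)=0$), are estimated by Lemma \ref{prop2.3}, which trades the $\vep$-weighted norms $\vep^{\f12}\|\pa_y\widehat{\FW}_n\|_{L^2}$, $\vep^{\f12}|\tn|\|\widehat{\FW}_n\|_{L^2}$ (and hence $\vep^{\f12}\|\omega_{u,n}\|_{L^2}$) for $\bar{M}^{\f12}(1+\bar{M}^{\f12})\|\widehat{\FW}_n\|_{L^2}+\|\FR_n\|_{L^2}^{\f12}\|\widehat{\FW}_n\|_{L^2}^{\f12}$. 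In $\mathrm{curl}\,\widetilde{\FR}_{\FU,n}$ from \eqref{vorticity-u} I would split off the genuine sources $\FR_{\FU,n}$, $U_s\FR_{\FH,n}$ and the boundary-layer-profile pieces ($\sqrt\vep\,\bm{A}_{\FU}$, $\sqrt\vep\,\bm{B}_{\FU}$, $\bm{C}_{\FU}$, $\bm{C}_{\FH}$, $\vep^{-\f12}\hat{\psi}_n\FD_{\FU}$, $\vep^{-\f12}\hat{\psi}_n\FD_{\FH}$) and pair each against $\pa_y(Z\hat{\psi}_n)=Z'\hat{\psi}_n+Z\hat{h}_n$. The parts meeting the factor $Z\hat{h}_n$ are handled by Cauchy--Schwarz and the profile decay \eqref{R}, where the crucial $\vep^{\f14}$ gains come from $Z^{\f12}(y)\cdot(\text{profile in }Y)\lesssim\vep^{\f14}\cdot(\text{bounded})$ since $Z^{\f12}(y)\sim y^{\f12}=\vep^{\f14}Y^{\f12}$; the parts meeting the commutator factor $Z'\hat{\psi}_n=Z'\pa_y^{-1}\hat{h}_n$ are exactly of the shape \eqref{w1_1}, and produce the $|\log\vep|^{1+\frac{\eta}{3}}$ weight in \eqref{2.12}.

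For the velocity part, I would not use the vorticity equation but multiply the second equation of \eqref{pr-new-n} directly by $Z\widehat{\FU}_n$ and take imaginary parts, so that the left side becomes $\tn\|Z^{\f12}\widehat{\FU}_n\|_{L^2}^2$; here all boundary contributions at $y=0$ vanish because $\widehat{\FU}_n|_{y=0}=\mathbf{0}$ and $Z(0)=0$, so the boundary term $\hat{h}_n\pa_y\hat{u}_n|_{y=0}$ that obstructs a naive $L^2$-coercivity estimate with the multiplier $(\widehat{\FH}_n,\widehat{\FU}_n)$ never arises, and all remaining terms (the $\vep$-diffusion of $\widehat{\FH}_n$, the $b_p$, $\bm{C}_{\FH}$, $\FD_{\FH}$ coefficient terms, and the source $\FR_{\FH,n}$) are dominated by Lemma \ref{prop2.3}, Hardy's inequality, \eqref{R} and \eqref{w1_1} in the same way. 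Adding the two estimates, I would absorb the copies of $\gamma_0\tn\|Z^{\f12}\widehat{\FH}_n\|_{L^2}^2$ and $\tn\|Z^{\f12}\widehat{\FU}_n\|_{L^2}^2$ that appear on the right by Young's inequality, then take square roots to reach \eqref{2.12}. I expect the main obstacle to be the bookkeeping rather than any single estimate: the smallness of $\varrho(\bar{M}+\bar{M}^4)$ is not assumed in this lemma, so one must keep explicit track of the powers of $\bar{M}$ and of the frequency $\tn$ at every step, the weighted norm $\|Z^{\f12}\widehat{\FW}_n\|_{L^2}$ and the un-weighted norm $\|\widehat{\FW}_n\|_{L^2}$ are strongly coupled through the diffusion and coefficient terms, and one must verify that the critical Hardy estimate \eqref{w1_1} costs only the single logarithmic factor $|\log\vep|^{1+\frac{\eta}{3}}$ and not a negative power of $\vep$ — which is the whole reason the logarithm, and nothing worse, enters the main theorem.
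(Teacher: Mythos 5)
Your treatment of the magnetic block coincides with the paper's: both test the first equation of \eqref{2.11} against $Z\hat{\psi}_n$, extract the coercive term $|\tn|\int G_sZ|\widehat{\FH}_n|^2\dd y$, use $G_sZ'\equiv1$ on $[0,1]$ together with \eqref{z4} and Hardy to kill the lower-order term, and invoke \eqref{w1_1} for the commutator pieces. The velocity block, however, is where your route diverges and where there is a genuine gap. The paper does \emph{not} obtain $|\tn|\|Z^{\f12}\widehat{\FU}_n\|_{L^2}^2$ from the non-curled second equation of \eqref{pr-new-n} tested against $Z\widehat{\FU}_n$; it takes the curl of that equation and tests against $Z\hat{\phi}_n$, then sums with the magnetic identity \emph{before} taking the imaginary part. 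The point of this symmetric pairing is the exact cancellation
\begin{align*}
\mathrm{Im}\Big(\big\langle\omega_{u,n},Z\omega_{h,n}\big\rangle+\big\langle\omega_{h,n},Z\omega_{u,n}\big\rangle\Big)=\mathrm{Im}\Big(2\,\mathrm{Re}\big\langle\omega_{u,n},Z\omega_{h,n}\big\rangle\Big)=0,
\end{align*}
which removes the leading cross-diffusion contribution entirely, leaving only commutator remainders of size $\vep\|\pa_y\widehat{\FW}_n\|_{L^2}\|\widehat{\FW}_n\|_{L^2}$ — a single derivative factor, so Lemma \ref{prop2.3} converts this into $\sqrt{\vep}\,\bar{M}^{\f12}(1+\bar{M}^{\f12})\|\widehat{\FW}_n\|_{L^2}^2+\cdots$, and the $\sqrt{\vep}$ survives to become the $\vep^{\f14}$ in \eqref{2.12} after taking square roots.

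In your scheme this cancellation is forfeited twice. Testing only the first equation of \eqref{2.11} against $Z\hat{\psi}_n$ leaves the unpartnered term $\vep\,\mathrm{Im}\langle\omega_{u,n},Z\omega_{h,n}\rangle$, and testing the non-curled second equation of \eqref{pr-new-n} against $Z\widehat{\FU}_n$ leaves, after one integration by parts, $\vep\kappa\,\mathrm{Im}\langle\pa_y\widehat{\FH}_n,Z\pa_y\widehat{\FU}_n\rangle+\vep\kappa\tn^2\,\mathrm{Im}\langle\widehat{\FH}_n,Z\widehat{\FU}_n\rangle$; none of these vanishes, and each carries \emph{two} gradient (or frequency) factors. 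The only available bound is $\vep\big(\|\pa_y\widehat{\FW}_n\|_{L^2}^2+\tn^2\|\widehat{\FW}_n\|_{L^2}^2\big)\lesssim\bar{M}(1+\bar{M})\|\widehat{\FW}_n\|_{L^2}^2+\|\FR_n\|_{L^2}\|\widehat{\FW}_n\|_{L^2}$ from Lemma \ref{prop2.3}, which has no factor of $\vep$ left over; a weighted gradient bound is not yet available at this stage (Lemma \ref{prop2.7} comes later and itself uses \eqref{2.12}). You would therefore prove only $|\tn|^{\f12}\|Z^{\f12}\widehat{\FW}_n\|_{L^2}\lesssim\bar{M}^{\f12}(1+\bar{M}^{\f12})\|\widehat{\FW}_n\|_{L^2}+\cdots$, without the crucial $\vep^{\f14}$ prefactor on the $\|\widehat{\FW}_n\|_{L^2}$-terms. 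That factor is not cosmetic: in \eqref{L2-0} the product $\|Z^{\f12}\widehat{\FW}_n\|_{L^2}^{\f23}\|\pa_y\widehat{\FW}_n\|_{L^2}^{\f13}$ needs the $\vep^{\f16}$ from the weighted estimate to cancel the $\vep^{-\f16}$ from the gradient estimate, otherwise the self-bounding term cannot be absorbed and the $L^2$-estimate does not close. To repair your argument you must restore the symmetric pairing: take the curl of the second equation, test against $Z\hat{\phi}_n$, and add the two identities before extracting the imaginary part.
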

\begin{proof}
  By taking inner product of the first and second equations in \eqref{2.11} with $sgn(\tn)\mu^{-1}Z{\hat{\psi}_n}$ and $sgn(\tn)\ka^{-1}Z{\hat{\phi}_n}$ respectively and adding them together, then taking its imaginary part, we obtain
	\begin{align}\label{2.13}
	\sum_{i=1}^4I_i=0,
	\end{align}
	where 
	$$\begin{aligned}
	I_1&=-|\tn|~\text{Re}\left(\big\langle curl\big(G_s\widehat{\FH}_n\big),~\mu^{-1}Z{\hat{\psi}_n} \big\rangle+\big\langle \omega_{u,n},~\ka^{-1}Z{\hat{\phi}_n}\big\rangle\right),\\
	I_2&=-\vep sgn(\tn)~\text{Im}\left(\big\langle  (\pa_y^2-\tilde{n}^2)\omega_{u,n},~Z{\hat{\psi}_n} \big\rangle +\big\langle  (\pa_y^2-\tilde{n}^2)\omega_{h,n},~Z{\hat{\phi}_n} \big\rangle\right),\\
	I_3&=-\vep sgn(\tn)~\text{Im}\big\langle
	(\mu+\kappa)curl\left(U_s(\pa_y^2-\tilde{n}^2)\widehat{\FH}_n\right),~\mu^{-1}Z{\hat{\psi}_n}  \big\rangle,\\
	I_4&=-\frac{sgn(\tn)}{\mu}\text{Im}\big\langle curl~ \widetilde{\FR}_{\FU,n},~Z{\hat{\psi}_n}\big\rangle-\frac{sgn(\tn)}{\kappa}\text{Im}\big\langle curl~ \widetilde{\FR}_{\FH,n},~Z{\hat{\phi}_n}\big\rangle.
	\end{aligned}
	$$
We estimate $I_1$ to $I_4$ term by term. For $I_1$, by integration by parts and using the boundary condition $\hat{\psi}_n|_{y=0}=\hat{\phi}_n|_{y=0}=0$, it holds
\begin{align*}
\begin{aligned}
I_1=&|\tn|\left(\mu^{-1}\big\|\sqrt{G_sZ}~\widehat{\FH}_n\big\|_{L^2}^2+\kappa^{-1}\big\|\sqrt{Z}~\widehat{\FU}_n\big\|_{L^2}^2\right)+|\tn|\text{Re}\left(\mu^{-1}\big\langle G_s\hat{h}_n,~\pa_yZ~\hat{\psi}_n\big\rangle+\kappa^{-1}\big\langle \hat{u}_n,~\pa_yZ~\hat{\phi}_n\big\rangle \right)\\
:=&I_{1,1}+I_{1,2}.
\end{aligned}\end{align*}
From \eqref{A4}, it follows
$$I_{1,1}\geq c_0|\tilde{n}|~\big\|Z^{\f12}\widehat{\FW}_n\big\|_{L^2}^2
$$
for some positive constants $c_0$ independent of $\vep$ and $\b{M}$. For $I_{1,2}$,  we  notice that $\hat{h}_n=\pa_y\hat{\psi}_n, \hat{u}_n=\pa_y\hat{\phi}_n$ and $\pa_y Z\equiv0, ~y\geq2$. Then
 by integration by parts and using boundary condition $\hat{\psi}_n|_{y=0}=\hat{\phi}_n|_{y=0}=0$, we 
 can write it as
 \begin{align*}
 	\begin{aligned}
 	I_{1,2}&=\frac{|\tn|}{2}\int_0^2\pa_yZ\left[\mu^{-1}G_s\pa_y(|\hat{\psi}_n|^2)+\kappa^{-1}\pa_y(|\hat{\phi}_n|^2)\right]dy\\
 	&=-\frac{|\tn|}{2\mu}\int_0^2\pa_y\left(G_s\pa_yZ\right)|\hat{\psi}_n|^2dy-\frac{|\tn|}{2\kappa}\int_0^2\pa_y^2Z|\hat{\phi}_n|^2dy.
 	\end{aligned}
 \end{align*}
 We estimate the two terms on the right-hand side of the above identity. For the first one, 
 we use \eqref{z2} and \eqref{z4} to obtain
 $$
 \begin{aligned}
 -\frac{|\tilde{n}|}{2\mu}\int_0^2\pa_y\left(G_s\pa_yZ\right)|\hat{\psi}_n|^2d y&=-\frac{|\tilde{n}|}{2\mu}\int_1^2\pa_y\left(G_s\pa_yZ\right)|\hat{\psi}_n|^2dy\\
 &\geq -C\bar{M}\vep|\tilde{n}|\int_1^2 y^{-2}|\hat{\psi}_n|^2dy\geq -C\bar{M}\vep|\tilde{n}|~\|\hat{h}_n\|_{L^2}^2,
 \end{aligned}
 $$
 where we have used Hardy inequality in the last inequality.
 Similarly, the second term is bounded from below as
 $$
 \begin{aligned}
 -\frac{|\tilde{n}|}{2\kappa}\int_0^2\pa_y^2Z|\hat{\phi}_n|^2dy
 &=-\frac{|\tilde{n}|}{2\kappa}\left(\int_0^{3/2}\pa_y^2Z~|\hat{\phi}_n|^2dy+\int_{3/2}^{2}\pa_y^2Z~|\hat{\phi}_n|^2dy\right)\\
 &\geq-\frac{|\tilde{n}|}{2\kappa}\int_0^{3/2}|y^2\pa_y^2Z|\cdot \left|\frac{\hat{\phi}_n}{y}\right|^2dy\geq -C\sqrt{\vep}\b{M}|\tilde{n}|\|\hat{u}_n\|_{L^2}^2.
 \end{aligned}
 $$
 By combining the above estimates related to $I_1$, one has
\begin{align}\label{2.15}
\begin{aligned}
I_1&\geq 
c_0|\tilde{n}|~\|Z^{\f12}\widehat{\FW}_n\|_{L^2}^2  -C\bar{M}|\tilde{n}|\left(\vep\|\hat{h}_n\|_{L^2}^2+\sqrt{\vep}\|\hat{u}_n\|_{L^2}^2\right).
\end{aligned}\end{align}

Next we consider $I_2$. The boundary conditions
$Z|_{y=0}=\hat{\phi}_n|_{y=0}=\hat{\psi}_n|_{y=0}=0$ allow us to use integration by parts twice. That is, we have
$$
\begin{aligned}
I_2=&-sgn(\tn)\vep\text{Im}\Big(\langle  \omega_{u,n},~Z{\omega}_{h,n}\rangle+ \langle  \omega_{h,n},~Z{\omega}_{u,n}\rangle\Big)-2sgn(\tn)\vep\text{Im}\Big(\langle  \omega_{u,n},~\pa_yZ{\hat{h}_n}\rangle+ \langle  \omega_{h,n},~\pa_yZ{\hat{u}_n}\rangle\Big)\\
&-sgn(\tn)\vep\text{Im}\Big(\langle  \omega_{u,n},~\pa_y^2Z{\hat{\psi}_n}\rangle+ \langle  \omega_{h,n},~\pa_y^2Z{\hat{\phi}_n}\rangle\Big)\\
:=&I_{2,1}+I_{2,2}+I_{2,3}.
\end{aligned}
$$
It is straightforward to see that $I_{2,1}=0.$ 
By the Cauchy-Schwarz inequality, 
$$
\begin{aligned}
|I_{2,2}|&\leq 2\vep\left|\langle  \omega_{u,n},~\pa_yZ{\hat{h}_n}\rangle+ \langle  \omega_{h,n},\pa_yZ{\hat{u}_n}\rangle\right|\\
&\leq C\vep\|\pa_yZ\|_{L^\infty}\left(\|\omega_{u,n}\|_{L^2}\|\hat{h}_n\|_{L^2}+\|\omega_{h,n}\|_{L^2}\|\hat{u}_n\|_{L^2}\right)\\
&\leq C\vep\left(\|\pa_y\widehat{\FW}_n\|_{L^2}+|\tilde{n}|\|\widehat{\FW}_n\|_{L^2}\right)\|\widehat{\FW}_n\|_{L^2}.
\end{aligned}
$$
And by  the Hardy inequality and \eqref{z5}, 
$$
\begin{aligned}
|I_{2,3}|&\leq\vep\left|\langle  \omega_{u,n},~\pa_y^2Z{\hat{\psi}_n}\rangle+\langle  \omega_{h,n},~\pa_y^2Z{\hat{\phi}_n}\rangle\right|\\
&\leq C\vep\|y\pa_y^2Z\|_{L^\infty}\left(\|\omega_{u,n}\|_{L^2}\|y^{-1}\hat{\psi}_n\|_{L^2}+\|\omega_{h,n}\|_{L^2}\|y^{-1}\hat{\phi}_n\|_{L^2}\right)\\
&\leq  C(1+\bar{M})\vep\left(\|\pa_y\widehat{\FW}_n\|_{L^2}+|\tilde{n}|\|\widehat{\FW}_n\|_{L^2}\right)\|\widehat{\FW}_n\|_{L^2}.
\end{aligned}
$$
Hence combining the above three estimates yields
\begin{align}\label{2.16}
|I_2|\leq C(1+\b{M})\vep\left(\|\pa_y\widehat{\FW}_n\|_{L^2}+|\tilde{n}|\|\widehat{\FW}_n\|_{L^2}\right)\|\widehat{\FW}_n\|_{L^2}.
\end{align}
The term $I_3$ can be treated in a similar way. In fact, by integration by parts and the boundary conditions
$Z|_{y=0}=\hat{\psi}_n|_{y=0}=0$, we have
$$
\begin{aligned}
I_3=&-\vep sgn(\tn)~\big(1+\frac{\kappa}{\mu}\big)\text{Im}\left\langle \pa_y\hh,(2\pa_yZU_s+Z\pa_yU_s)\hh+\pa_y(U_s\pa_yZ)\hat{\psi}_n  \right\rangle.
\end{aligned}
$$
Note that from \eqref{z5}, it holds
\begin{align}\label{useful}
\begin{aligned}
|\pa_yZU_s|&\leq \|\pa_yZ\|_{L^\infty}\|U_s\|_{L^\infty_Y}\leq C,\quad |Z\pa_yU_s|\leq \|y^{-1}Z\|_{L^\infty}\|Y\pa_YU_s\|_{L^\infty_Y}\leq C(1+\b{M}),
\end{aligned}
\end{align}
and
$$\begin{aligned}
|y\pa_y(U_s\pa_yZ)|&\leq |y\pa_y^2Z~U_s|+\vep^{-\f12}|y\pa_yZ\pa_YU_s|\leq \|y\pa_y^2Z\|_{L^\infty}\|U_s\|_{L^\infty_Y}+\|\pa_yZ\|_{L^\infty}\|Y\pa_YU_s\|_{L^\infty_Y}\\
&\leq C(1+\b{M}).
\end{aligned}
$$
Thus one has
\begin{align}\label{2.17}
\begin{aligned}
|I_3|&\leq C\vep\|\pa_y\hh\|_{L^2}\left((\|\pa_yZU_s\|_{L^\infty}+\|Z\pa_yU_s\|_{L^\infty})\|\hat{h}_n\|_{L^2}+\|y\pa_y(U_s\pa_yZ)\|_{L^\infty} \|y^{-1}\hat{\psi}_{n}\|_{L^2}\right)\\
&\leq C(1+\b{M})\vep\left(\big\|\pa_y\widehat{\FW}_n\big\|_{L^2}+|\tilde{n}|\big\|\widehat{\FW}_n\big\|_{L^2}\right)\big\|\widehat{\FH}_n\big\|_{L^2}.
\end{aligned}\end{align}

For $I_4$, we first estimate $\left|\text{Im}\left\langle curl~ \widetilde{\FR}_{\FU,n}, ~Z{\hat{\psi}_n}\right\rangle \right|$. By integration by parts, 
\begin{align}\label{est_I4}
	\begin{aligned}
	\left\langle curl~ \widetilde{\FR}_{\FU,n}, ~Z{\hat{\psi}_n}\right\rangle=-\left\langle  \widetilde{\FR}_{\FU,n}, ~Z{\widehat{\FH}_n}\right\rangle-\left\langle  \tilde{R}_{u,n}, ~\pa_yZ{\hat{\psi}_n}\right\rangle,
	\end{aligned}
\end{align}
where  $\widetilde{\FR}_{\FU,n}=(\tilde{R}_{u,n},\tilde{R}_{v,n}).$ As from \eqref{vorticity-u}, it holds
\begin{align*}
	\begin{aligned}
	\widetilde{\FR}_{\FU,n}=&\FR_{\FU,n}+\frac{\mu+\kappa}{\kappa}U_s\FR_{\FH,n}-i\tn\sqrt{\varepsilon}\bm{A}_{\FU}\widehat{\FH}_n-\sqrt{\varepsilon}\Big(\bm{B}_{\FU}-2(\kappa+\mu)a_p\pa_YH_s\bm{I}_{2}\Big)\pa_{y}\widehat{\FH}_n\\
	&-\left(\bm{C}_{\FU}+\frac{\mu+\kappa}{\kappa}U_s\bm{C}_\FH\right)\widehat{\FH}_n-\varepsilon^{-\f12}\hat{\psi}_n\left(\FD_{\FU}+\frac{\mu+\kappa}{\kappa}U_s\FD_\FH\right).
	\end{aligned}
\end{align*}
Hence, we have
\begin{align}\label{est-I4-1}
\begin{aligned}
	&\left|\left\langle  \widetilde{\FR}_{\FU,n}, ~Z{\widehat{\FH}_n}\right\rangle\right|\\
	&\lesssim \big\|Z^{\f12}{\FR}_{n}\big\|_{L^2}\big\|Z^{\f12}\widehat{\FH}_n\big\|_{L^2}+\|y^{-1}Z\|_{L^\infty}\big\|\widehat{\FH}_n\big\|_{L^2}\cdot\left\{\vep|\tn|\big\|Y\bm{A}_{\FU}\big\|_{L^\infty_Y}\big\|\widehat{\FH}_n\big\|_{L^2}\right.\\
	&\qquad+\vep\left(\big\|Y\bm{B}_{\FU}\big\|_{L^\infty_Y}+\|Y\pa_YH_s\|_{L^\infty_Y}\right)\big\|\pa_y\widehat{\FH}_n\big\|_{L^2}+\sqrt{\vep}\left(\big\|Y\bm{C}_{\FU}\|_{L^\infty_Y}+\big\|Y\bm{C}_{\FH}\|_{L^\infty_Y}\right)\big\|\widehat{\FH}_n\big\|_{L^2}\\
	&\qquad\left.+\sqrt{\vep}\left(\big\|Y^2\bf{D}_{\FU}\|_{L^\infty_Y}+\big\|Y^2\bf{D}_{\FH}\|_{L^\infty_Y}\right)\big\|y^{-1}\hat{\psi}_n\big\|_{L^2}\right\}\\
	&\lesssim\big\|Z^{\f12}{\FR}_{n}\big\|_{L^2}\big\|Z^{\f12}\widehat{\FH}_n\big\|_{L^2}+\vep\b{M}\left(\big\|\pa_y\widehat{\FH}_n\big\|_{L^2}+|\tilde{n}|\big\|\widehat{\FH}_n\big\|_{L^2}\right)\big\|\widehat{\FH}_n\big\|_{L^2}+\sqrt{\vep}\bar{M}(1+\bar{M})\big\|\widehat{\FH}_n\big\|_{L^2}^2.
\end{aligned}\end{align}
Similarly, by noting that $\hat{\psi}_n=\pa_y^{-1}\hat{h}_n$,  it holds that for any $\eta>0$ and $\delta\geq0,$
\begin{align}\label{est-I4-2}
\begin{aligned}
&\left|\left\langle  \tilde{R}_{u,n}, ~\pa_yZ{\hat{\psi}_n}\right\rangle\right|\\
&\lesssim|\log\vep|^{1+\frac{\eta}{3}}\|\pa_yZ\|_{L^\infty} \big\|Z^{\f12}{\FR}_{n}\big\|_{L^2}\left(\big\|Z^{\f12}\hat{h}_n\big\|_{L^2}+\vep^{\frac{1}{4}+\delta}\big\|\hat{h}_n\big\|_{L^2}\right)\\
&\quad+\|\pa_yZ\|_{L^\infty}\big\|y^{-1}\hat{\psi}_n\big\|_{L^2}\left\{\vep|\tn|\big\|Y\bm{A}_{\FU}\big\|_{L^\infty_Y}\big\|\widehat{\FH}_n\big\|_{L^2}+\vep\left(\big\|Y\bm{B}_{\FU}\big\|_{L^\infty_Y}+\|Y\pa_YH_s\|_{L^\infty_Y}\right)\big\|\pa_y\widehat{\FH}_n\big\|_{L^2}\right.\\
&\qquad\qquad+\sqrt{\vep}\left(\big\|Y\bm{C}_{\FU}\|_{L^\infty_Y}+\big\|Y\bm{C}_{\FH}\|_{L^\infty_Y}\right)\big\|\widehat{\FH}_n\big\|_{L^2}\left.+\sqrt{\vep}\left(\big\|Y^2\bf{D}_{\FU}\|_{L^\infty_Y}+\big\|Y^2\bf{D}_{\FH}\|_{L^\infty_Y}\right)\big\|y^{-1}\hat{\psi}_n\big\|_{L^2}\right\}\\
&\lesssim|\log\vep|^{1+\frac{\eta}{3}} \big\|Z^{\f12}{\FR}_{n}\big\|_{L^2}\left(\big\|Z^{\f12}\hat{h}_n\big\|_{L^2}+\vep^{\frac{1}{4}+\delta}\big\|\hat{h}_n\big\|_{L^2}\right)\\
&\quad+\vep\b{M}\left(\big\|\pa_y\widehat{\FH}_n\big\|_{L^2}+|\tilde{n}|\big\|\widehat{\FH}_n\big\|_{L^2}\right)\big\|\widehat{\FH}_n\big\|_{L^2}+
\sqrt{\vep}\bar{M}(1+\bar{M})\big\|\widehat{\FH}_n\big\|_{L^2}^2,
\end{aligned}\end{align}
where we have used \eqref{w1_1} to obtain the first term on the right-hand side of the  first inequality. Applying \eqref{est-I4-1} and \eqref{est-I4-2} to \eqref{est_I4} yields
\begin{align}\label{est-I4-3}
	\begin{aligned}
	&\left|\left\langle curl~ \widetilde{\FR}_{\FU,n}, ~Z{\hat{\psi}_n}\right\rangle\right|\\
	&\lesssim|\log\vep|^{1+\frac{\eta}{3}} \big\|Z^{\f12}{\FR}_{n}\big\|_{L^2}\left(\big\|Z^{\f12}\widehat{\FH}_n\big\|_{L^2}+\vep^{\frac{1}{4}+\delta}\big\|\hat{h}_n\big\|_{L^2}\right)\\
	&\quad+\vep\b{M}\left(\big\|\pa_y\widehat{\FH}_n\big\|_{L^2}+|\tilde{n}|\big\|\widehat{\FH}_n\big\|_{L^2}\right)\big\|\widehat{\FH}_n\big\|_{L^2}+
	\sqrt{\vep}\bar{M}(1+\bar{M})\big\|\widehat{\FH}_n\big\|_{L^2}^2\\
	&\leq~\frac{c_0|\tn|}{4}\big\|Z^{\f12}\widehat{\FH}_n\big\|_{L^2}^2+C|\tn|^{-1}|\log\vep|^{2+\frac{2\eta}{3}} \big\|Z^{\f12}{\FR}_{n}\big\|_{L^2}^2+C\vep^{\f14+\delta}|\log\vep|^{1+\frac{\eta}{3}} \big\|Z^{\f12}{\FR}_{n}\big\|_{L^2}\|\hat{h}_n\|_{L^2}\\
	&\quad+C\vep\b{M}\left(\big\|\pa_y\widehat{\FH}_n\big\|_{L^2}+|\tilde{n}|\big\|\widehat{\FH}_n\big\|_{L^2}\right)\big\|\widehat{\FH}_n\big\|_{L^2}+C\sqrt{\vep}\bar{M}(1+\bar{M})\big\|\widehat{\FH}_n\big\|_{L^2}^2.
	\end{aligned}
\end{align}
Similarly,  one can obtain
\begin{align*}
\begin{aligned}
&\left|\left\langle curl~ \widetilde{\FR}_{\FH,n}, ~Z{\hat{\phi}_n}\right\rangle\right|\\
&\leq~\frac{c_0|\tn|}{4}\big\|Z^{\f12}\widehat{\FU}_n\big\|_{L^2}^2+C|\tn|^{-1}|\log\vep|^{2+\frac{2\eta}{3}} \big\|Z^{\f12}{\FR}_{n}\big\|_{L^2}^2+C\vep^{\f14+\delta}|\log\vep|^{1+\frac{\eta}{3}} \big\|Z^{\f12}{\FR}_{n}\big\|_{L^2}\|\hat{u}_n\|_{L^2}\\
&\quad+C\vep\b{M}\big\|\pa_y\widehat{\FH}_n\big\|_{L^2}\big\|\widehat{\FU}_n\big\|_{L^2}+C\sqrt{\vep}\bar{M}(1+\bar{M})\big\|\widehat{\FH}_n\big\|_{L^2}\big\|\widehat{\FU}_n\big\|_{L^2}.
\end{aligned}
\end{align*}
Then combining the above two estimates gives
\begin{align}\label{est-I4}
\begin{aligned}
	|I_4|\leq~&\frac{c_0|\tn|}{2}\big\|Z^{\f12}\widehat{\FW}_n\big\|_{L^2}^2
	+C|\tn|^{-1}|\log\vep|^{2+\frac{2\eta}{3}} \big\|Z^{\f12}{\FR}_{n}\big\|_{L^2}^2+C\vep^{\f14+\delta}|\log\vep|^{1+\frac{\eta}{3}}\big\|Z^{\f12}{\FR}_{n}\big\|_{L^2}\|\widehat{\FW}_n\|_{L^2}\\
&+C\vep\b{M}\left(\big\|\pa_y\widehat{\FH}_n\big\|_{L^2}+|\tilde{n}|\big\|\widehat{\FH}_n\big\|_{L^2}\right)\big\|\widehat{\FW}_n\big\|_{L^2}+C
\sqrt{\vep}\bar{M}(1+\bar{M})\big\|\widehat{\FW}_n\big\|_{L^2}^2. 
\end{aligned}\end{align}

Thus we complete the estimates of $I_1-I_4$. By substituting \eqref{2.15}, \eqref{2.16}, \eqref{2.17} and \eqref{est-I4} into \eqref{2.13}, 
we obtain 
	\begin{align}\label{est-wL2}
	\begin{aligned}
|\tn|~\|Z^{\f12}\widehat{\FW}_n\|_{L^2}^2\lesssim~&\sqrt{\vep}\bar{M}|n|\|\hat{u}_n\|_{L^2}^2+\vep(1+\b{M})\left(\|\pa_y\widehat{\FW}_n\|_{L^2}+|\tilde{n}|	\|\widehat{\FW}_n\|_{L^2}
\right)\|\widehat{\FW}_n\|_{L^2}\\
&+|\tn|^{-1}|\log\vep|^{2+\frac{2\eta}{3}} \big\|Z^{\f12}{\FR}_{n}\big\|_{L^2}^2+\vep^{\f14+\delta}|\log\vep|^{1+\frac{\eta}{3}} \big\|Z^{\f12}{\FR}_{n}\big\|_{L^2}\|\widehat{\FW}_n\|_{L^2}\\
&+\sqrt{\vep}\bar{M}(1+\bar{M})\big\|\widehat{\FW}_n\big\|_{L^2}^2.
\end{aligned}\end{align}
From \eqref{2.6} and \eqref{2.9} one has
\begin{align*}
		\sqrt{\vep}\left(\|\pa_y\widehat{\FW}_n\|_{L^2}+|\tn|\|\widehat{\FW}_n\|_{L^2}\right)\lesssim\b{M}^{\f12}(1+\b{M}^{\f12})\|\widehat{\FW}_n\|_{L^2}+\|\FR_n\|_{L^2}^{\f12}\|\widehat{\FW}_n\|_{L^2}^{\f12},
\end{align*}
and
\begin{align*}
	|n|\|\hat{u}_n\|_{L^2}^2\lesssim \bar{M}(1+\bar{M})\big\|\widehat{\FW}_n\big\|_{L^2}^2+\|\FR_{n}\|_{L^2}\big\|\widehat{\FW}_n\big\|_{L^2}.
\end{align*}
Then substituting the above two inequalities into \eqref{est-wL2} implies
	\begin{align*}
\begin{aligned}
|\tn|~\|Z^{\f12}\widehat{\FW}_n\|_{L^2}^2\lesssim~&\sqrt{\vep}\bar{M}\left[\bar{M}(1+\bar{M})\big\|\widehat{\FW}_n\big\|_{L^2}^2+\|\FR_{n}\|_{L^2}\big\|\widehat{\FW}_n\big\|_{L^2}\right]\\
&+\sqrt{\vep}(1+\b{M})\|\widehat{\FW}_n\|_{L^2}\left(\b{M}^{\f12}(1+\b{M}^{\f12})\|\widehat{\FW}_n\|_{L^2}+\|\FR_n\|_{L^2}^{\f12}\|\widehat{\FW}_n\|_{L^2}^{\f12}
\right)\\
&+|\tn|^{-1}|\log\vep|^{2+\frac{2\eta}{3}} \big\|Z^{\f12}{\FR}_{n}\big\|_{L^2}^2+\vep^{\f14+\delta}|\log\vep|^{1+\frac{\eta}{3}} \big\|Z^{\f12}{\FR}_{n}\big\|_{L^2}\|\widehat{\FW}_n\|_{L^2}\\
&+\sqrt{\vep}\bar{M}(1+\bar{M})\big\|\widehat{\FW}_n\big\|_{L^2}^2\\
\lesssim~&|\tn|^{-1}|\log\vep|^{2+\frac{2\eta}{3}} \big\|Z^{\f12}{\FR}_{n}\big\|_{L^2}^2+\sqrt{\vep}\bar{M}\big\|{\FR}_{n}\big\|_{L^2}\big\|\widehat{\FW}_n\big\|_{L^2}\\
&+\sqrt{\vep}(1+\bar{M})\big\|{\FR}_{n}\big\|_{L^2}^{\f12}\big\|\widehat{\FW}_n\big\|_{L^2}^{\f32}+
\sqrt{\vep}\bar{M}^{\f12}(1+\bar{M}^{\f52})\big\|\widehat{\FW}_n\big\|_{L^2}^2,
\end{aligned}\end{align*}
 provided $\varepsilon$ small enough. Hence we obtain \eqref{2.12}  and then complete the proof of the lemma.
\end{proof}

To recover the $L^2$-estimate of $\widehat{\FW}_n$ by the interpolation inequality \eqref{AP1},  we have the following lemma.

\begin{lemma}
	There exist positive constants  $\delta_1$ and $\vep_1$, such that if
$$ 
\varrho(\b{M}+\b{M}^4)\leq \delta_1,\qquad \vep\in (0,\vep_1),
$$ 
then 
	\begin{align}\label{2.18}
	\begin{aligned}
	&\sqrt{\vep}|\tn|^{\f13}\left(\|\pa_y\widehat{\FW}_n\|_{L^2}+|\tn|\big\|\widehat{\FW}_n\big\|_{L^2}\right)+|\tn|^{\f23}\big\|\widehat{\FW}_n\big\|_{L^2}+\vep^{-\f14}|\tn|^{\f56}\big\|Z^{\f12}\widehat{\FW}_n\big\|_{L^2}\\
	&\leq C_6 |\log\vep|^{1+\frac{\eta}{3}}\left(\big\|{\FR}_{n}\big\|_{L^2}+\vep^{-\f14} \big\|Z^{\f12}{\FR}_{n}\big\|_{L^2}\right),
	\end{aligned}\end{align}
	where the positive constant $C_6$ is independent of $\vep$ and $n$.
\end{lemma}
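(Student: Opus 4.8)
The plan is to combine the three estimates just established---the full-gradient bound \eqref{2.6}, the velocity $L^2$-bound \eqref{2.9}, and the weighted bound \eqref{2.12}---with the interpolation inequality \eqref{AP1} of Lemma \ref{lmz2}, applied componentwise to $\widehat{\FW}_n$. The point is that \eqref{2.6} and \eqref{2.9} together already control $\sqrt{\vep}(\|\pa_y\widehat{\FW}_n\|_{L^2}+|\tn|\|\widehat{\FW}_n\|_{L^2})$ and $|\tn|^{\f12}\|\widehat{\FU}_n\|_{L^2}$ in terms of $\b{M}^{\f12}(1+\b{M}^{\f12})\|\widehat{\FW}_n\|_{L^2}+\|\FR_n\|_{L^2}^{\f12}\|\widehat{\FW}_n\|_{L^2}^{\f12}$, so the only genuinely new quantity we must still absorb is $\|\widehat{\FH}_n\|_{L^2}$ (equivalently, the full $\|\widehat{\FW}_n\|_{L^2}$), which is exactly where the weighted estimate enters.

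First I would feed the output of \eqref{2.12} into \eqref{AP1} with $g$ ranging over the components of $\widehat{\FW}_n$. Using $\|\pa_y g\|_{L^2}\le\|\pa_y\widehat{\FW}_n\|_{L^2}$ and the bound \eqref{2.6} for $\sqrt\vep\|\pa_y\widehat{\FW}_n\|_{L^2}$, the interpolation gives
\begin{align*}
\|\widehat{\FW}_n\|_{L^2}\lesssim \|Z^{\f12}\widehat{\FW}_n\|_{L^2}^{\f23}\Big(\vep^{-\f12}\b{M}^{\f12}(1+\b{M}^{\f12})\|\widehat{\FW}_n\|_{L^2}+\vep^{-\f12}\|\FR_n\|_{L^2}^{\f12}\|\widehat{\FW}_n\|_{L^2}^{\f12}\Big)^{\f13}+\|Z^{\f12}\widehat{\FW}_n\|_{L^2}.
\end{align*}
Substituting the bound \eqref{2.12} for $\|Z^{\f12}\widehat{\FW}_n\|_{L^2}$ (which carries the factor $|\tn|^{-\f12}$ in front of the terms proportional to $\|\widehat{\FW}_n\|_{L^2}$, namely $\vep^{\f14}\b{M}^{\f14}(1+\b{M}^{\f54})|\tn|^{-\f12}\|\widehat{\FW}_n\|_{L^2}$ and similar) and using Young's inequality to split every mixed term of the form $\|\widehat{\FW}_n\|_{L^2}^{\theta}\times(\text{source})^{1-\theta}$, one collects on the left all the $\|\widehat{\FW}_n\|_{L^2}$ contributions. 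The coefficient of $\|\widehat{\FW}_n\|_{L^2}$ coming through this chain is, after the $2/3$-power and the interpolation, of size $C\big(\b{M}^{\f12}(1+\b{M}^{\f12})|\tn|^{-1}\big)^{\alpha}$ for suitable exponents plus $C\vep^{\f14}\b{M}^{\f14}(1+\b{M}^{\f54})|\tn|^{-\f12}$, and since $|\tn|=|n|/\varrho\ge 1/\varrho$ for $n\neq 0$, each such coefficient is bounded by $C\varrho^{\beta}(\b{M}+\b{M}^4)^{\gamma}$; the smallness hypothesis $\varrho(\b{M}+\b{M}^4)\le\delta_1$ (and $\vep\le\vep_1$) then makes it $\le 1/2$, so the $\|\widehat{\FW}_n\|_{L^2}$ terms are absorbed. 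What survives is a clean bound $\|\widehat{\FW}_n\|_{L^2}\lesssim |\tn|^{-\frac23}|\log\vep|^{1+\frac\eta3}(\|\FR_n\|_{L^2}+\vep^{-\f14}\|Z^{\f12}\FR_n\|_{L^2})$; tracking the $|\tn|$-powers carefully gives the stated $|\tn|^{\f23}\|\widehat{\FW}_n\|_{L^2}$ on the left.

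Once $\|\widehat{\FW}_n\|_{L^2}$ is controlled this way, I would back-substitute: plugging it into \eqref{2.6} yields the $\sqrt\vep|\tn|^{\f13}(\|\pa_y\widehat{\FW}_n\|_{L^2}+|\tn|\|\widehat{\FW}_n\|_{L^2})$ term of \eqref{2.18} (the extra $|\tn|^{\f13}$ being squeezed out of the interpolation exponents and $|\tn|\ge 1/\varrho$), and plugging it into \eqref{2.12} yields the weighted term $\vep^{-\f14}|\tn|^{\f56}\|Z^{\f12}\widehat{\FW}_n\|_{L^2}$, since the $\vep^{\f14}\|\widehat{\FW}_n\|_{L^2}$-type contributions on the right of \eqref{2.12}, once $\|\widehat{\FW}_n\|_{L^2}$ is expressed through the source, produce an overall $\vep^{-\f14}$ matching the left side (the gain of $\vep^{\f14}$ in the weighted magnetic estimate is precisely what makes this work). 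Throughout, the constant $C_6$ must be shown independent of both $\vep$ and $n$: independence of $\vep$ follows because all $\vep$-powers combine to the advertised ones, and independence of $n$ because every occurrence of $|\tn|$ is either on the favorable side or bounded below using $\varrho\le\delta_1/(\b{M}+\b{M}^4)$, i.e.\ $|\tn|\ge 1/\varrho\ge (\b{M}+\b{M}^4)/\delta_1$.

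The main obstacle I anticipate is the bookkeeping of the coupled absorption: the un-weighted estimate \eqref{AP1} feeds on $\|Z^{\f12}\widehat{\FW}_n\|_{L^2}$, while \eqref{2.12} for $\|Z^{\f12}\widehat{\FW}_n\|_{L^2}$ in turn feeds on $\|\widehat{\FW}_n\|_{L^2}$ and $\|\pa_y\widehat{\FW}_n\|_{L^2}$, so one is running a fixed-point/absorption argument in which the multiplicative constants must stay below $1$ uniformly in $n$. This forces careful use of Young's inequality (choosing the split exponents so the self-coupling term always appears with the combination $\varrho(\b{M}+\b{M}^4)$, never with a bare power of $\b{M}$ or $\varrho$ alone) and careful tracking of where $|\tn|$-powers land, to end up with exactly $|\tn|^{\f23}$ and $|\tn|^{\f56}$ rather than some worse negative power that would spoil summability over $n$ later. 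The $\vep$-powers and the $|\log\vep|^{1+\eta/3}$ factor, by contrast, are essentially inherited verbatim from \eqref{2.12} and require no new idea.
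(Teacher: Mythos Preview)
Your proposal is correct and follows essentially the same route as the paper: apply the interpolation inequality \eqref{AP1} to $\widehat{\FW}_n$, feed in \eqref{2.6} for $\|\pa_y\widehat{\FW}_n\|_{L^2}$ and \eqref{2.12} for $\|Z^{\f12}\widehat{\FW}_n\|_{L^2}$, absorb the resulting $\|\widehat{\FW}_n\|_{L^2}$ self-term using $|\tn|^{-1}\le\varrho$ and the smallness of $\varrho(\b{M}+\b{M}^4)$, and then back-substitute into \eqref{2.6} and \eqref{2.12} to recover the gradient and weighted pieces. One small simplification relative to your outline: the velocity bound \eqref{2.9} is not needed separately here, since its content has already been absorbed into the proof of \eqref{2.12}; the interpolation runs directly on the full vector $\widehat{\FW}_n$ and the absorption coefficient works out to $C|\tn|^{-\f13}\b{M}^{\f13}(1+\b{M})$ (plus an $\vep^{\f14}$-small piece), which under $|\tn|\ge\varrho^{-1}$ gives exactly the $\varrho^{\f13}(\b{M}+\b{M}^4)^{\f13}$-type smallness you anticipated.
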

\begin{proof}
	We apply the estimates \eqref{2.6} and \eqref{2.12} to the interpolation inequality \eqref{AP1} for $\widehat{\FW}_n$, and obtain
	\begin{align}\label{L2-0}
		\begin{aligned}
		&\big\|\widehat{\FW}_n\big\|_{L^2}\leq~ 2\sqrt{2C_0}\|Z^{\f12}\widehat{\FW}_n\|_{L^2}^{\f23}\|\pa_y\widehat{\FW}_n\|_{L^2}^{\f13}+C_0\|Z^{\f12}\widehat{\FW}_n\|_{L^2}\\
		&\leq 2\sqrt{2C_0}|\tn|^{-\f13}C_5^{\f23}\left\{|\tn|^{-\f12}|\log\vep|^{1+\frac{\eta}{3}} \big\|Z^{\f12}{\FR}_{n}\big\|_{L^2}+{\vep}^{\f14}\bar{M}^{\f12}\big\|{\FR}_{n}\big\|_{L^2}^{\f12}\big\|\widehat{\FW}_n\big\|_{L^2}^{\f12}\right.\\
		&\hspace{3.5cm}\left.+{\vep}^{\f14}(1+{\bar{M}}^{\f12})\big\|{\FR}_{n}\big\|_{L^2}^{\f14}\big\|\widehat{\FW}_n\big\|_{L^2}^{\f34}+{\vep}^{\f14}
		\bar{M}^{\f14}(1+\bar{M}^{\f54})\big\|\widehat{\FW}_n\big\|_{L^2}\right\}^{\f23}\\
		&\qquad\qquad\cdot\vep^{-\f16}C_3^{\f13}\left\{\b{M}^{\f12}(1+\b{M}^{\f12})\|\widehat{\FW}_n\|_{L^2}+\|\FR_n\|_{L^2}^{\f12}\|\widehat{\FW}_n\|_{L^2}^{\f12}\right\}^{\f13}\\
		&\quad+C_0|\tn|^{-\f12}C_5\left\{|\tn|^{-\f12}|\log\vep|^{1+\frac{\eta}{3}} \big\|Z^{\f12}{\FR}_{n}\big\|_{L^2}+{\vep}^{\f14}\bar{M}^{\f12}\big\|{\FR}_{n}\big\|_{L^2}^{\f12}\big\|\widehat{\FW}_n\big\|_{L^2}^{\f12}\right.\\
		&\hspace{3cm}\left.+{\vep}^{\f14}(1+{\bar{M}}^{\f12})\big\|{\FR}_{n}\big\|_{L^2}^{\f14}\big\|\widehat{\FW}_n\big\|_{L^2}^{\f34}+{\vep}^{\f14}
		\bar{M}^{\f14}(1+\bar{M}^{\f54})\big\|\widehat{\FW}_n\big\|_{L^2}\right\}\\
		&\leq\left[\f14+2\sqrt{2C_0}C_3^{\f13}C_5^{\f23}|\tn|^{-\f13}\bar{M}^{\f13}(1+\bar{M})+C_0C_5|\tn|^{-\f12}\bar{M}^{\f14}(1+\bar{M}^{\f54}){\vep}^{\f14}\right]\big\|\widehat{\FW}_n\big\|_{L^2}\\
		&\quad+C|\tn|^{-\f23}\big(1+|\tn|^{-\f43}\big)|\log\vep|^{1+\frac{\eta}{3}}\left(\vep^{-\f14} \big\|Z^{\f12}{\FR}_{n}\big\|_{L^2}+\big\|{\FR}_{n}\big\|_{L^2}\right),
		\end{aligned}
	\end{align}
	where the constant $C>0$ is independent of $\vep$ and $\tn$. 
	Thus if we  choose $\varrho$ and $\bar{M}$ such that 
	\begin{align}\label{delta1}
	2\sqrt{2C_0}C_3^{\f13}C_5^{\f23}\varrho^{\f13}\bar{M}^{\f13}(1+\bar{M})\leq\f14,
	\end{align} 
	then by  the fact $|\tn|^{-1}\leq\varrho$ for $n\neq0$, \eqref{L2-0} implies that for sufficiently small $\vep$, 
	\begin{align}\label{L2}
		\begin{aligned}
			\big\|\widehat{\FW}_n\big\|_{L^2} \lesssim|\tn|^{-\f23}|\log\vep|^{1+\frac{\eta}{3}}\left(\vep^{-\f14} \big\|Z^{\f12}{\FR}_{n}\big\|_{L^2}+\big\|{\FR}_{n}\big\|_{L^2}\right).
		\end{aligned}
	\end{align}
		Substituting \eqref{L2} into \eqref{2.6} and \eqref{2.12} respectively yields
	\begin{align}\label{L2-1}
	\sqrt{\vep}\left(\|\pa_y\widehat{\FW}_n\|_{L^2}+|\tn|\|\widehat{\FW}_n\|_{L^2}\right)\lesssim|\tn|^{-\f13} |\log\vep|^{1+\frac{\eta}{3}}\left(\vep^{-\f14} \big\|Z^{\f12}{\FR}_{n}\big\|_{L^2}+\big\|{\FR}_{n}\big\|_{L^2}\right),
	\end{align}
	and
	\begin{align}\label{L2-2}
	\|Z^{\f12}\widehat{\FW}_n\|_{L^2}\lesssim|\tn|^{-\f56}|\log\vep|^{1+\frac{\eta}{3}}\left( \big\|Z^{\f12}{\FR}_{n}\big\|_{L^2}+{\vep}^{\f14}\big\|{\FR}_{n}\big\|_{L^2}\right).
	\end{align}
	Combining \eqref{L2}-\eqref{L2-2} yields \eqref{2.18} and $\delta_1$ is determined by \eqref{delta1}. 
	And this completes  the proof of the lemma.
\end{proof}

Finally, in order to prove Proposition \ref{prop2.2} we need to obtain the weighted estimate $\big\|Z^{\f12}\pa_y\widehat{\FW}_n\big\|_{L^2}$. Similarly,  to avoid  the commutator of the weight
function and  the pressure $p$, we take curl on the first equality of \eqref{pr-new-n} to obtain
\begin{align}\label{curl-u}
\begin{aligned}
	&i\tn~ curl\left[(1+\frac{\mu}{\kappa})U_s\widehat{\FU}_n-G_s\widehat{\FH}_n\right]-\mu\vep(\pa_y^2-\tn^2)\omega_{u,n}\\
&=curl\left(\FR_{\FU,n}-i\tn\varepsilon^{\f12}\bm{A}_{\FU}\widehat{\FH}_n-\varepsilon^{\f12}\bm{B}_{\FU}\pa_{y}\widehat{\FH}_n-\bm{C}_{\FU}\widehat{\FH}_n-\varepsilon^{-\f12}\hat{\psi}_n\FD_{\FU}\right)\triangleq curl~ \widehat{\FR}_{\FU,n}.
\end{aligned}\end{align}
\begin{lemma}\label{prop2.7}
	For sufficient small $\vep$ and any $\eta>0$, there exists positive constant $C$, independent of $\vep$ and $n$, such that 
	\begin{align}\label{2.18-1}
	\begin{aligned}
	\sqrt{\vep}\left(\big\|Z^{\f12}\pa_y\widehat{\FW}_{n}\big\|_{L^2}+|\tn|~\big\|Z^{\f12}\widehat{\FW}_{n}\big\|_{L^2}\right)\leq~ &C|\log\vep|^{\f12+\f\eta 6}\left(|\tn|^{-\f12}\big\|Z^{\f12}{\FR}_n\big\|_{L^2}+|\tn|^{\f12}\big\|Z^{\f12}\widehat{\FW}_n\big\|_{L^2}\right)\\
	&+C{\vep}^{\f14}\left(\big\|\widehat{\FW}_n\big\|_{L^2}+\big\|\widehat{\FW}_n\big\|_{L^2}^{\f12}\big\|{\FR}_n\big\|_{L^2}^{\f12}\right).
	\end{aligned}\end{align}
\end{lemma}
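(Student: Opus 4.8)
The plan is to reduce the weighted derivative estimate to a weighted $L^2$-bound on the vorticities $\omega_{u,n},\omega_{h,n}$, and then to close a weighted energy estimate for the vorticity system \eqref{curl-u}, \eqref{2.11} using the stream functions $\hat\phi_n,\hat\psi_n$ as multipliers — working with the vorticity formulation (rather than \eqref{pr-new-n} directly) is essential here, exactly as in Lemma~\ref{prop2.5}, to avoid any commutator between the weight $Z$ and the pressure $p_n$. First, I apply Lemma~\ref{lmw3} mode-by-mode to the divergence-free fields $\widehat{\FU}_n$ and $\widehat{\FH}_n$, which satisfy $\hat v_n|_{y=0}=\hat g_n|_{y=0}=0$, to obtain $\|Z^{\f12}\pa_y\widehat{\FW}_n\|_{L^2}+|\tn|\,\|Z^{\f12}\widehat{\FW}_n\|_{L^2}\lesssim \|Z^{\f12}\omega_{u,n}\|_{L^2}+\|Z^{\f12}\omega_{h,n}\|_{L^2}$; hence it suffices to estimate $\sqrt\vep\big(\|Z^{\f12}\omega_{u,n}\|_{L^2}+\|Z^{\f12}\omega_{h,n}\|_{L^2}\big)$.

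Next, I pair \eqref{curl-u} with $-\mu^{-1}Z\overline{\hat\phi_n}$ and the second equation of \eqref{2.11} with $-\ka^{-1}Z\overline{\hat\psi_n}$, recalling $\omega_{u,n}=(\pa_y^2-\tn^2)\hat\phi_n$, $\omega_{h,n}=(\pa_y^2-\tn^2)\hat\psi_n$ and $\hat\phi_n|_{y=0}=\hat\psi_n|_{y=0}=Z|_{y=0}=0$, add the two resulting identities and take the real part. Two integrations by parts — all boundary terms vanishing by the stated vanishing at $y=0$ — turn the two diffusion terms into the coercive quantity $\vep\|Z^{\f12}\omega_{u,n}\|_{L^2}^2+\vep\|Z^{\f12}\omega_{h,n}\|_{L^2}^2$ plus lower-order commutators with $\pa_yZ,\pa_y^2Z$. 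It then remains to control the inertial/magnetic terms coming from $i\tn\,curl\big[(1+\tfrac\mu\ka)U_s\widehat{\FU}_n-G_s\widehat{\FH}_n\big]$ and $-i\tn\omega_{u,n}$, and the source terms $curl\,\widehat{\FR}_{\FU,n}$, $curl\,\widetilde{\FR}_{\FH,n}$. For the former, after integrating by parts the genuinely singular contribution $\langle\pa_y\hat\phi_n,U_sZ\pa_y\hat\phi_n\rangle$ and its magnetic counterpart are real, hence annihilated by the prefactor $i\tn$ when the real part is taken — this is the same algebraic cancellation that produced the coercive term $I_{1,1}$ in the proof of Lemma~\ref{prop2.5}; every surviving piece either reproduces a structure already estimated there (gaining a factor $\vep^{\f14}$ from the weight hitting the $Y$-derivatives of the profile, cf.\ \eqref{useful} and \eqref{est-I4-1}) or involves the non-boundary-layer commutator $\pa_yZ,\pa_y^2Z$. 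For the source terms, one integration by parts gives $\langle\widehat{\FR}_{\FU,n},Z\widehat{\FU}_n\rangle$-type terms, bounded by $\|Z^{\f12}\FR_n\|_{L^2}\|Z^{\f12}\widehat{\FW}_n\|_{L^2}$ up to the $O(1)$ profile coefficients bundled in $\widehat{\FR}_{\FU,n}$ (treated like the inertial terms), together with a commutator in which $\pa_yZ$, supported on $[0,2]$, meets $\pa_y^{-1}\hat u_n$ (resp.\ $\pa_y^{-1}\hat h_n$); the latter is estimated by the critical weighted Hardy bound \eqref{w1_1}, at the price of the logarithmic loss $|\log\vep|^{1+\frac\eta3}$.

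Finally, collecting these estimates, inserting the already-established bounds \eqref{2.18} (together with \eqref{2.6}, \eqref{2.9}, \eqref{2.12}) to absorb the occurrences of $\|\widehat{\FW}_n\|_{L^2}$, $\|\pa_y\widehat{\FW}_n\|_{L^2}$ and $\|Z^{\f12}\widehat{\FW}_n\|_{L^2}$ on the right-hand side, and using $|\tn|^{-1}\le\varrho$ for $n\neq0$ and the smallness of $\vep$, I arrive at an inequality of the schematic form
\[
\vep\|Z^{\f12}\omega_{u,n}\|_{L^2}^2+\vep\|Z^{\f12}\omega_{h,n}\|_{L^2}^2\lesssim |\log\vep|^{1+\frac\eta3}\|Z^{\f12}\FR_n\|_{L^2}\big(\|Z^{\f12}\widehat{\FW}_n\|_{L^2}+\vep^{\f14}\|\widehat{\FW}_n\|_{L^2}\big)+\vep^{\f12}\big(\cdots\big).
\]
Taking square roots and applying Young's inequality — splitting $\|Z^{\f12}\FR_n\|_{L^2}^{\f12}\|Z^{\f12}\widehat{\FW}_n\|_{L^2}^{\f12}$ against the weights $|\tn|^{\mp\f12}$ — then yields \eqref{2.18-1}; note that the exponent on $|\log\vep|$ is halved to $\frac12+\frac\eta6$ precisely because the logarithmic loss enters the quadratic estimate before the square root is taken. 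The main difficulty is the bookkeeping in the second step: checking that the $O(\vep^{-\f12})$ contributions cancel exactly requires the identity underlying Lemma~\ref{prop2.5}, and one must track the dependence of all constants on $\vep$, $\tn$, $\bar M$ and $\varrho$ carefully so that every surviving term falls into one of the three admissible types on the right-hand side of \eqref{2.18-1}.
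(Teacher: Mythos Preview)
Your overall strategy is sound and produces the estimate, but your pairing differs from the paper's in the treatment of the magnetic equation. You test the second equation of \eqref{2.11} (the vorticity equation for $\omega_{h,n}$) against $-\ka^{-1}Z\hat\psi_n$, obtaining $\vep\|Z^{\f12}\omega_{h,n}\|_{L^2}^2$ from the diffusion and then invoking Lemma~\ref{lmw3}. The paper instead tests the \emph{direct} second equation of \eqref{pr-new-n} against $G_s(\tfrac{y}{\sqrt\vep})Z\widehat{\FH}_n$, which gives $\|Z^{\f12}\pa_y\widehat{\FH}_n\|_{L^2}^2+\tn^2\|Z^{\f12}\widehat{\FH}_n\|_{L^2}^2$ directly from the diffusion. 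The advantage of the paper's choice is that the leading inertial cross terms --- $-i\tn G_s\widehat{\FH}_n$ from \eqref{curl-u} against $Z\widehat{\FU}_n$ and $-i\tn\widehat{\FU}_n$ from \eqref{pr-new-n} against $G_sZ\widehat{\FH}_n$ --- cancel exactly, so that $J_1$ reduces to the single commutator $-\tn\,\mathrm{Im}\langle (1+\tfrac\mu\ka)U_s\hat u_n-G_s\hat h_n,\,\pa_yZ\,\hat\phi_n\rangle$, handled by \eqref{w1_1}. In your scheme these cross terms do not cancel, but they are harmlessly bounded by $|\tn|\|Z^{\f12}\widehat{\FW}_n\|_{L^2}^2$, which fits on the right-hand side of \eqref{2.18-1}. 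One minor inaccuracy: you do not need to ``absorb'' $\|\widehat{\FW}_n\|_{L^2}$ or $\|Z^{\f12}\widehat{\FW}_n\|_{L^2}$ via \eqref{2.18} or \eqref{2.12} --- these quantities appear as such on the right of \eqref{2.18-1}; only \eqref{2.6} and \eqref{2.9} are needed, to convert occurrences of $\sqrt\vep\|\pa_y\widehat{\FW}_n\|_{L^2}$ and $|\tn|\|\hat u_n\|_{L^2}^2$ into admissible terms.
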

\begin{proof}
	We take inner product of \eqref{curl-u} with $-Z\hat{\phi}_n$, and the second equation for $\widehat{\FH}$ in \eqref{pr-new-n} with $\displaystyle G_s(\frac{y}{\sqrt{\varepsilon}})Z\widehat{\FH}_n$ respectively, then 
	take the real part of its summation  to obtain
	\begin{align}\label{est-weightderi}
	\sum_{i=1}^{3}J_i=0,
	\end{align}
	where
		\begin{align*}\begin{aligned}
		&J_1=\tn~\mbox{Im}\left(\left\langle curl\left[(1+\frac{\mu}{\kappa})U_s\widehat{\FU}_n-G_s\widehat{\FH}_n\right],~Z\hat\phi_n\right\rangle+\left\langle \widehat{\FU}_n,~G_sZ\widehat{\FH}_n\right\rangle\right),\\
		&J_2=\vep\mbox{Re}\left(\left\langle\mu(\pa_y^2-\tn^2)\omega_{u,n},~Z\hat\phi_n\right\rangle-\left\langle\kappa(\pa_y^2-\tn^2)\widehat{\FH}_{n},~G_sZ\widehat{\FH}_n\right\rangle\right),\\
		&J_3=\mbox{Re}\left(\left\langle curl~\widehat{\FR}_{\FU,n},~Z\hat{\phi}_n\right\rangle-\left\langle\FR_{\FH,n}+2\kappa\varepsilon^{\f12}b_p\partial_y\widehat{\FH}_n-\bm{C}_{\FH}\widehat{\FH}_n-\varepsilon^{-\f12}\hat{\psi}_n\FD_{\FH},~G_sZ\widehat{\FH}_n\right\rangle\right).
		\end{aligned}
	\end{align*}
We estimate $J_i, i=1,2,3$ term by term. Firstly, for $J_1$ one has by integration by parts and the boundary condition $\hat{\phi}_n|_{y=0}=0$ that
	\begin{align*}
		&\left\langle curl\left[(1+\frac{\mu}{\kappa})U_s\widehat{\FU}_n-G_s\widehat{\FH}_n\right],~Z\hat\phi_n\right\rangle\\
		&=-\left\langle (1+\frac{\mu}{\kappa})U_s\widehat{\FU}_n-G_s\widehat{\FH}_n,~Z\widehat{\FU}_n\right\rangle-\left\langle (1+\frac{\mu}{\kappa})U_s\hat{u}_n-G_s\hat{h}_n,~Z_y\hat\phi_n\right\rangle.
	\end{align*}
	We apply the above equality to $J_1$ and get
	\begin{align*}
		J_1=-\tn~\mbox{Im}\left\langle (1+\frac{\mu}{\kappa})U_s\hat{u}_n-G_s\hat{h}_n,~Z_y\hat\phi_n\right\rangle,
	\end{align*}
	and by virtue of \eqref{w1_1} it implies that for sufficient small $\vep,$
	\begin{align}\label{est-J1}
	\begin{aligned}
		|J_1|&\lesssim|\tn||\log\vep|^{1+\f\eta 3} \big(\|Z^{\f12}\hat{u}_n\|_{L^2}+\|Z^{\f12}\hat{h}_n\|_{L^2}\big)\left(\|Z^{\f12}\hat{u}_n\|_{L^2}+\vep^{\f14+\delta}\|\hat{u}_n\|_{L^2}\right)\\
		&\lesssim |\tn||\log\vep|^{1+\f\eta 3}\big\|Z^{\f12}\widehat{\FW}_n\big\|_{L^2}^2+\sqrt{\vep}|\tn|\|\hat{u}_n\|_{L^2}^2.
	\end{aligned}\end{align}
	
	Secondly, as $\omega_{u,n}=(\pa_y^2-\tn^2)\hat{\phi}_n$, by integration by parts and the boundary conditions $Z|_{y=0}=\hat{\phi}_n|_{y=0}=0$,  we can write $J_2$ as
	\begin{align*}
	\begin{aligned}
	J_2=~&\vep\mbox{Re}\left(\mu\left\langle\omega_{u,n},~Z\omega_{u,n}\right\rangle+\kappa\left\langle\pa_y\widehat{\FH}_{n},~G_sZ\pa_y\widehat{\FH}_n\right\rangle+\kappa\tn^2\left\langle\widehat{\FH}_{n},~G_sZ\widehat{\FH}_n\right\rangle\right)\\
	&+\vep\mbox{Re}\left(\mu\left\langle\omega_{u,n},~2Z_y\hat{u}_n+Z_{yy}\hat\phi_n\right\rangle+\kappa\left\langle\pa_y\hat{h}_{n},~\pa_y(G_sZ)\hat{h}_n\right\rangle\right)\\
	\triangleq~&J_{2,1}+J_{2,2}.
	\end{aligned}\end{align*}
	From \eqref{A4}, it is easy to get
	\begin{align*}
		J_{2,1}&\geq\mu\vep\big\|Z^{\f12}\omega_{u,n}\big\|_{L^2}^2+\kappa\gamma_0\vep\left(\big\|Z^{\f12}\pa_y\widehat{\FH}_{n}\big\|_{L^2}^2+\tn^2\big\|Z^{\f12}\widehat{\FH}_{n}\big\|_{L^2}^2\right).
	\end{align*} 
	Similar to \eqref{useful},  it follows from $|\pa_y(G_sZ)|\leq C(1+\bar{M})$,  \eqref{z5} and the Hardy inequality that 
	\begin{align*}
\begin{aligned}
|J_{2,2}|\lesssim~&\vep\|\omega_{u,n}\|_{L^2}\left(\|Z_y\|_{L^\infty}\|\hat{u}_n\|_{L^2}+\|yZ_{yy}\|_{L^\infty}\|y^{-1}\hat{\phi}_n\|_{L^2}\right)+\vep\big\|\pa_y\widehat{\FH}_n\big\|_{L^2}\big\|\pa_y(G_sZ)\big\|_{L^\infty}\big\|\widehat{\FH}_n\big\|_{L^2}\\
\lesssim~&\vep\left(\big\|\pa_y\widehat{\FW}_n\big\|_{L^2}+|\tn|\big\|\widehat{\FW}_n\big\|_{L^2}\right)\big\|\widehat{\FW}_n\big\|_{L^2}.
\end{aligned}	\end{align*} 
	Combining the above three estimates yields
	\begin{align}\label{est-J2}
	\begin{aligned}
	J_2\geq~&\mu\vep\big\|Z^{\f12}\omega_{u,n}\big\|_{L^2}^2+\kappa\gamma_0\vep\left(\big\|Z^{\f12}\pa_y\widehat{\FH}_{n}\big\|_{L^2}^2+\tn^2\big\|Z^{\f12}\widehat{\FH}_{n}\big\|_{L^2}^2\right)\\
	&-C\vep\left(\big\|\pa_y\widehat{\FW}_n\big\|_{L^2}+|\tn|\big\|\widehat{\FW}_n\big\|_{L^2}\right)\big\|\widehat{\FW}_n\big\|_{L^2}.
	\end{aligned}\end{align}
	Next, for $J_3$, similar to \eqref{est-I4-3},  we  obtain
	\begin{align*}	\begin{aligned}
	\left|\left\langle curl~ \widehat{\FR}_{\FU,n}, ~Z{\hat{\phi}_n}\right\rangle\right|
	&\lesssim|\log\vep|^{1+\frac{\eta}{3}} \big\|Z^{\f12}{\FR}_{n}\big\|_{L^2}\left(\big\|Z^{\f12}\widehat{\FU}_n\big\|_{L^2}+\vep^{\frac{1}{4}+\delta}\big\|\hat{u}_n\big\|_{L^2}\right)\\
	&\quad+\vep\left(\big\|\pa_y\widehat{\FH}_n\big\|_{L^2}+|\tilde{n}|\big\|\widehat{\FH}_n\big\|_{L^2}\right)\big\|\widehat{\FU}_n\big\|_{L^2}+
	\sqrt{\vep}\big\|\widehat{\FW}_n\big\|_{L^2}^2.
	\end{aligned}
	\end{align*}
	As for \eqref{est-I4-1}, one has
	\begin{align*}
		\begin{aligned}
	 &\left|\left\langle\FR_{\FH,n}+2\kappa\varepsilon^{\f12}b_p\partial_y\widehat{\FH}_n-\bm{C}_{\FH}\widehat{\FH}_n-\varepsilon^{-\f12}\hat{\psi}_n\FD_{\FH},~G_sZ\widehat{\FH}_n\right\rangle\right|\\
	 &\lesssim \big\|Z^{\f12}{\FR}_{n}\big\|_{L^2}\big\|Z^{\f12}\widehat{\FH}_n\big\|_{L^2}+\vep\big\|\pa_y\widehat{\FH}_n\big\|_{L^2}\big\|\widehat{\FH}_n\big\|_{L^2}+\sqrt{\vep}\big\|\widehat{\FH}_n\big\|_{L^2}^2.
		\end{aligned}
	\end{align*}
	Combining the above two inequalities yields
	\begin{align}\label{est-J3}
		\begin{aligned}
	|J_3|\lesssim&|\log\vep|^{1+\frac{\eta}{3}} \big\|Z^{\f12}{\FR}_{n}\big\|_{L^2}\left(\big\|Z^{\f12}\widehat{\FW}_n\big\|_{L^2}+\vep^{\frac{1}{4}+\delta}\big\|\hat{u}_n\big\|_{L^2}\right)\\
	&+\vep\left(\big\|\pa_y\widehat{\FH}_n\big\|_{L^2}+|\tilde{n}|\big\|\widehat{\FH}_n\big\|_{L^2}\right)\big\|\widehat{\FW}_n\big\|_{L^2}+
	\sqrt{\vep}\big\|\widehat{\FW}_n\big\|_{L^2}^2\\
	\lesssim&|\tn||\log\vep|^{1+\f\eta 3}\big\|Z^{\f12}\widehat{\FW}_n\big\|_{L^2}^2+\sqrt{\vep}|\tn|\|\hat{u}_n\|_{L^2}^2+|\tn|^{-1}|\log\vep|^{1+\f\eta 3}\big\|Z^{\f12}{\FR}_n\big\|_{L^2}^2\\
	&+\vep\left(\big\|\pa_y\widehat{\FH}_n\big\|_{L^2}+|\tilde{n}|\big\|\widehat{\FH}_n\big\|_{L^2}\right)\big\|\widehat{\FW}_n\big\|_{L^2}+
	\sqrt{\vep}\big\|\widehat{\FW}_n\big\|_{L^2}^2,
		\end{aligned}
	\end{align}
	provided $\vep$ small enough.
	
	Thus,  we substitute \eqref{est-J1}, \eqref{est-J2} and \eqref{est-J3} into \eqref{est-weightderi} to obtain
	\begin{align*}
		\begin{aligned}
		&\vep\left(\big\|Z^{\f12}\omega_{u,n}\big\|_{L^2}^2+\big\|Z^{\f12}\pa_y\widehat{\FH}_{n}\big\|_{L^2}^2+\tn^2\big\|Z^{\f12}\widehat{\FH}_{n}\big\|_{L^2}^2\right)\\
		&\lesssim|\tn||\log\vep|^{1+\f\eta 3}\big\|Z^{\f12}\widehat{\FW}_n\big\|_{L^2}^2+\sqrt{\vep}|\tn|\|\hat{u}_n\|_{L^2}^2+|\tn|^{-1}|\log\vep|^{1+\f\eta 3}\big\|Z^{\f12}{\FR}_n\big\|_{L^2}^2\\
		&\quad+\vep\left(\big\|\pa_y\widehat{\FH}_n\big\|_{L^2}+|\tilde{n}|\big\|\widehat{\FH}_n\big\|_{L^2}\right)\big\|\widehat{\FW}_n\big\|_{L^2}+
		\sqrt{\vep}\big\|\widehat{\FW}_n\big\|_{L^2}^2.
		\end{aligned}\end{align*}
		Then, applying \eqref{2.6} and \eqref{2.9} to the above inequality yields
		\begin{align*}\begin{aligned}
		&\vep\left(\big\|Z^{\f12}\omega_{u,n}\big\|_{L^2}^2+\big\|Z^{\f12}\pa_y\widehat{\FH}_{n}\big\|_{L^2}^2+\tn^2\big\|Z^{\f12}\widehat{\FH}_{n}\big\|_{L^2}^2\right)\\
		&\lesssim|\tn|^{-1}|\log\vep|^{1+\f\eta 3}\big\|Z^{\f12}{\FR}_n\big\|_{L^2}^2+|\tn|~|\log\vep|^{1+\frac{\eta}{3}}\big\|Z^{\f12}\widehat{\FW}_n\big\|_{L^2}^2\\
		&\quad+
		\sqrt{\vep}\left(\big\|\widehat{\FW}_n\big\|_{L^2}^2+\big\|\widehat{\FW}_n\big\|_{L^2}\big\|{\FR}_n\big\|_{L^2}+\big\|\widehat{\FW}_n\big\|_{L^2}^{\f32}\big\|{\FR}_n\big\|_{L^2}^{\f12}\right)\\
		&\lesssim|\tn|^{-1}|\log\vep|^{1+\f\eta 3}\big\|Z^{\f12}{\FR}_n\big\|_{L^2}^2+|\tn|~|\log\vep|^{1+\frac{\eta}{3}}\big\|Z^{\f12}\widehat{\FW}_n\big\|_{L^2}^2+
		\sqrt{\vep}\left(\big\|\widehat{\FW}_n\big\|_{L^2}^2+\big\|\widehat{\FW}_n\big\|_{L^2}\big\|{\FR}_n\big\|_{L^2}\right).
		\end{aligned}
	\end{align*}
This and  Lemma \ref{lmw3} give \eqref{2.18-1}. And the proof of the lemma is completed.

\end{proof}

\subsection{Final estimates} Now we are ready to give the proof of Proposition \ref{prop2.2}.\\

\underline{\it Proof of Proposition \ref{prop2.2}:} Similar to \cite{GM}, the existence of solution $(\FU,\FH)$ to the problem \eqref{2.0} follows from a standard procedure: we can replace $-\mu\vep\Delta \FU$ and $-\ka\vep\Delta \FH$ by $-\mu\vep\Delta \FU+s\FU$ and $-\mu\vep\Delta \FH+s\FH$ respectively, with $s>0$. It is straightforward to show the existence for sufficiently large $s$. One can check that a priori estimate \eqref{prop2-1} is uniform in $s$. Therefore, the existence part follows from a standard continuity argument. We omit the detail for brevity. In what follows, we focus on the a priori estimate \eqref{prop2-1}. 
The proof is divided into two steps.\\

\underline{Step 1: $L^2$-estimate}. By \eqref{2.18} and \eqref{2.18-1}, we  obtain 
\begin{align}\label{est-weightgradient}
\sqrt{\vep}\left(\|Z^{\f12}\pa_y\widehat{\FW}_n\|_{L^2}+|\tn|\|Z^{\f12}\widehat{\FW}_n\|_{L^2}\right)\lesssim|\tn|^{-\f13} |\log\vep|^{\frac{3+\eta}{2}}\left( \big\|Z^{\f12}{\FR}_{n}\big\|_{L^2}+\vep^{\f14}\big\|{\FR}_{n}\big\|_{L^2}\right).
\end{align} 
Combining \eqref{2.18} with \eqref{est-weightgradient} yields
\begin{align*}
\begin{aligned}
&|\tn|^{\f23}\big\|\widehat{\FW}_n\big\|_{L^2}+\vep^{-\f14}|\tn|^{\f56}\big\|Z^{\f12}\widehat{\FW}_n\big\|_{L^2}+\sqrt{\vep}|\tn|^{\f13}\left(\|\pa_y\widehat{\FW}_n\|_{L^2}+|\tn|\big\|\widehat{\FW}_n\big\|_{L^2}\right)\\
&+{\vep}^{\f14}|\tn|^{\f13}\left(\|Z^{\f12}\pa_y\widehat{\FW}_n\|_{L^2}+|\tn|\big\|Z^{\f12}\widehat{\FW}_n\big\|_{L^2}\right)\\
&\lesssim |\log\vep|^{\frac{3+\eta}{2}}\left(\big\|{\FR}_{n}\big\|_{L^2}+\vep^{-\f14} \big\|Z^{\f12}{\FR}_{n}\big\|_{L^2}\right).
\end{aligned}
\end{align*}
Then by  Lemma \ref{lem2.3}, Lemma \ref{lm2.1} and the fact $|\tn|^{-1}\leq\varrho, n\neq0$, one has
\begin{align*}
\begin{aligned}
&\big\|{\FW}_n\big\|_{L^2}+\vep^{-\f14}\big\|Z^{\f12}{\FW}_n\big\|_{L^2}+\sqrt{\vep}\left(\|\pa_y{\FW}_n\|_{L^2}+|\tn|\big\|{\FW}_n\big\|_{L^2}\right)\\
&+{\vep}^{\f14}\left(\|Z^{\f12}\pa_y{\FW}_n\|_{L^2}+|\tn|\big\|Z^{\f12}{\FW}_n\big\|_{L^2}\right)\\
&\lesssim\big\|\widehat{\FW}_n\big\|_{L^2}+\vep^{-\f14}\big\|Z^{\f12}\widehat{\FW}_n\big\|_{L^2}+\sqrt{\vep}\left(\|\pa_y\widehat{\FW}_n\|_{L^2}+|\tn|\big\|\widehat{\FW}_n\big\|_{L^2}\right)\\
&\quad+{\vep}^{\f14}\left(\|Z^{\f12}\pa_y\widehat{\FW}_n\|_{L^2}+|\tn|\big\|Z^{\f12}\widehat{\FW}_n\big\|_{L^2}\right)\\
&\leq C|\log\vep|^{\frac{3+\eta}{2}}\left(\|(\Ff_n,\Fq_n)\|_{L^2}+\vep^{-\f14}\|Z^{\f12}(\Ff_n,\Fq_n)\|_{L^2}\right),
\end{aligned}
\end{align*}
where the positive constant $C$ is independent of $\vep$ and $n$. Therefore, by Paserval equality one has
\begin{align}\label{L2-all}
\begin{aligned}
&\vep^{-\f14}\|\CQ_0{\FW}\|_{L^2(\Omega)}+\vep^{-\f12}\|Z^{\f12}\CQ_0{\FW}\|_{L^2(\Omega)}+\vep^{\f14}\|\nabla\CQ_0{\FW}\|_{L^2(\Omega)}+\|Z^{\f12}\nabla\CQ_0{\FW}\|_{L^2(\Omega)}\\
&=\vep^{-\f14}\left\|\Big\{\|{\FW}_n\|_{L^2}\Big\}_{n\neq0}\right\|_{l^2}+\vep^{-\f12}\left\|\Big\{\|Z^{\f12}{\FW}_n\|_{L^2}\Big\}_{n\neq0}\right\|_{l^2}\\
&\quad+\vep^{\f14}\left\|\Big\{\|(\pa_y{\FW}_n, i\tn{\FW}_n)\|_{L^2}\Big\}_{n\neq0}\right\|_{l^2}+\left\|\Big\{\|Z^{\f12}(\pa_y{\FW}_n, i\tn{\FW}_n)\|_{L^2}\Big\}_{n\neq0}\right\|_{l^2}\\
&\leq C\vep^{-\f14}|\log\vep|^{\f{3+\eta}{2}}\left(\left\|\Big\{\|(\Ff_n,\Fq_n)\|_{L^2}\Big\}_{n\neq0}\right\|_{l^2}+\vep^{-\f14}\left\|\Big\{\|Z^{\f12}(\Ff_n,\Fq_n)\|_{L^2}\Big\}_{n\neq0}\right\|_{l^2}\right)\\
&=C\vep^{-\f14}|\log\vep|^{\f{3+\eta}{2}}\left(\|\CQ_0(\Ff,\Fq)\|_{L^2(\Omega)}+\vep^{-\f14}\|Z^{\f12}\CQ_0(\Ff,\Fq)\|_{L^2(\Omega)}\right).
\end{aligned}
\end{align} 

\underline{Step 2: $L^\infty$-estimate}. By using \eqref{lem2.3-1} with $p=\infty$,  the standard interpolation: $\|f\|_{L^\infty}\leq \sqrt{2}\|\pa_yf\|_{L^2}^{\f12}\|f\|_{L^2}^{\f12}$, and the estimate \eqref{2.18}, 
we divide the estimation on  $\|{\FW}_n\|_{L^\infty}$ into  two parts. Firstly,  for $1\leq |n|\leq \vep^{-1}$,
$$
\begin{aligned}
\|{\FW}_n\|_{L^\infty}&\lesssim\|\widehat{\FW}_n\|_{L^\infty}\leq \sqrt{2} |\tn|^{-\f12}\vep^{-\f14}\left(|\tn|^{\f13}\vep^{\f12}\|\pa_y\widehat{\FW}_n\|_{L^2}\right)^{\f12}\left(|\tn|^{\f23}\|\widehat{\FW}_n\|_{L^2}\right)^{\f12}\\
&\lesssim |n|^{-\f12}\vep^{-\f14}|\log\vep|^{1+\f{\eta}{3}}\left(\|\FR_n\|_{L^2}+\vep^{-\f14}\|Z^{\f12}\FR_n\|_{L^2}\right).
\end{aligned}
$$
Secondly,  for $|n|>\vep^{-1}$,
$$
\begin{aligned}
\|{\FW}_n\|_{L^\infty}&\lesssim\|\widehat{\FW}_n\|_{L^\infty}\leq \sqrt{2} |\tn|^{-\f56}\vep^{-\f12}\left(|\tn|^{\f13}\vep^{\f12}\|\pa_y\widehat{\FW}_n\|_{L^2}\right)^{\f12}\left(|\tn|^{\f43}\vep^{\f12}\|\widehat{\FW}_n\|_{L^2}\right)^{\f12}\\
&\lesssim |n|^{-\f{7}{12}}\vep^{-\f14}|\log\vep|^{1+\f{\eta}{3}}\left(\|\FR_n\|_{L^2}+\vep^{-\f14}\|Z^{\f12}\FR_n\|_{L^2}\right).
\end{aligned}
$$
Thus, from the above two inequalities, it follows that by Cauchy-Schwarz inequality, 
\begin{align}\label{L-infty}
\begin{aligned}
\sum_{n\neq 0}\|\FW_n\|_{L^\infty}\lesssim&\vep^{-\f14}|\log\vep|^{1+\f{\eta}{3}}\sum_{1\leq|n|\leq\vep^{-1}}|n|^{-\f{1}{2}}\left(\|\FR_n\|_{L^2}+\vep^{-\f14}\|Z^{\f12}\FR_n\|_{L^2}\right)\\
&+\vep^{-\f14}|\log\vep|^{1+\f{\eta}{3}}\sum_{|n|>\vep^{-1}}|n|^{-\f{7}{12}}\left(\|\FR_n\|_{L^2}+\vep^{-\f14}\|Z^{\f12}\FR_n\|_{L^2}\right)\\
\lesssim&\vep^{-\f14}|\log\vep|^{1+\f{\eta}{3}}\left(\left\|\Big\{\|\FR_n\|_{L^2}\Big\}_{n\neq0}\right\|_{l^2}+\vep^{-\f14}\left\|\Big\{\|Z^{\f12}\FR_n\|_{L^2}\Big\}_{n\neq0}\right\|_{l^2}\right)\\
&\cdot\left[\left(\sum_{1\leq|n|\leq\vep^{-1}}|n|^{-1}\right)^{\f12}+\left(\sum_{|n|>\vep^{-1}}|n|^{-\f{7}{6}}\right)^{\f12}\right]\\
\leq& C \vep^{-\f14}|\log\vep|^{\f32+\f\eta{3}}\left(\|\CQ_0(\Ff,\Fq)\|_{L^2(\Omega)}+\vep^{-\f14}\|Z^{\f12}\CQ_0(\Ff,\Fq)\|_{L^2(\Omega)}\right),
\end{aligned}\end{align}
where we have used the third inequality in \eqref{R} and the Paserval equality in the last inequality. 
Finally, it is easy to obtain the desired estimate \eqref{prop2-1} from Lemma \ref{prop2.1}, \eqref{L2-all} and \eqref{L-infty}. 
The proof of Proposition \ref{prop2.2} is completed. \qed

\section{Nonlinear stability}
Recall the solution space $\CX$ defined in \eqref{X}. For any $(\Fq,\Fr)\in\CX$, we define the nonlinear map $\Phi(\Fq,\Fr)=(\widetilde{\FU},\widetilde{\FH})$ as the solution to the following linear problem: 
\begin{equation}
\left\{
\begin{aligned}\label{3.1}
&U_s\pa_x\widetilde{\FU}+\tilde{v}\pa_yU_s\Fe_1-H_s\pa_x\widetilde{\FH}-\tilde{g}\pa_yH_s\Fe_1+ \nabla P-\mu\vep\Delta\widetilde{\FU}=-\Fq\cdot\nabla \Fq+\Fr\cdot\nabla \Fr+\Ff_\FU,\\
&U_s\pa_x\widetilde{\FH}+\tilde{v}\pa_yH_s\Fe_1-H_s\pa_x\widetilde{\FU}-\tilde{g}\pa_yU_s\Fe_1-\ka\vep\Delta\widetilde{\FH}=-\Fq\cdot\nabla \Fr+\Fr\cdot\nabla \Fq+\Ff_\FH,\\
&\nabla\cdot \widetilde{\FU}=\nabla\cdot \widetilde{\FH}=0,\\
&\widetilde{\FU}|_{y=0}=(\pa_y\tilde{h},\tilde{g})|_{y=0}=\mathbf{0}.
\end{aligned}
\right.
\end{equation}
The existence of solution operator $\Phi$ in $\CX$ is guaranteed by Proposition \ref{prop2.2} provided that the source term $$(\tilde{\Ff}_\FU, \tilde{\Ff}_\FH)
\triangleq(-\Fq\cdot\nabla \Fq+\Fr\cdot\nabla \Fr+\Ff_\FU,~ -\Fq\cdot\nabla \Fr+\Fr\cdot\nabla \Fq+\Ff_\FH)$$ satisfies the compatibility conditions \eqref{ass-q} and \eqref{2.3}. Then the proof of main result follows from showing the contractiveness of $\Phi$ in a suitable domain of $\CX$ provided that the external force $(\Ff_{\FU},\Ff_{\FH})$ is suitably small. Consequently, it remains to verify \eqref{ass-q} and \eqref{2.3} for $(\tilde{\Ff}_\FU, \tilde{\Ff}_\FH)$, and to prove $\Phi$ is a contraction map. 

For $(\Fq,\Fr)\in\CX$, direct calculation shows that $(\tilde{\Ff}_\FU, \tilde{\Ff}_\FH)$ satisfies \eqref{ass-q}. To show \eqref{2.3} for $(\tilde{\Ff}_\FU, \tilde{\Ff}_\FH)$, let us recall the projections on the zeroth Fourier mode $\CP_0$ and on non-zero Fourier mode $\CQ_0$. Let $\Fs=(s_1,s_2)$ and $\Ft=(t_1,t_2)$ be any two divergence-free vectors satisfying boundary condition $s_2|_{y=0}=t_2|_{y=0}=0$. Then $\CP_0\Fs$ and $ \CP_0\Ft$ depend only on $y$ and $\CP_0s_2=\CP_0t_2=0$, which implies that
\begin{align}
\Fs\cdot\nabla\Ft&=\CP_0\Fs\cdot\nabla\CP_0\Ft+\CP_0\Fs\cdot\nabla\CQ_0\Ft+\CQ_0\Fs\cdot\nabla\CP_0\Ft+\CQ_0\Fs\cdot\nabla\CQ_0\Ft\nonumber\\
&=\CP_0s_1\pa_x\CQ_{0}\Ft+\CQ_0s_2(\pa_y\CP_0t_1)\Fe_1+\CQ_0\Fs\cdot\nabla\CQ_0\Ft.\label{3.4}
\end{align}
 Observe that
$$\begin{aligned}
\CP_0(\Fs\cdot\nabla\Ft)&=\CP_0\left(\CQ_0\Fs\cdot\nabla\CQ_0\Ft\right)=\pa_y\CP_0\left(\CQ_0s_2\CQ_0\Ft\right)+\CP_0\left(\CQ_0s_1\pa_x\CQ_0\Ft-\pa_y\CQ_0s_2\CQ_0\Ft\right)\\
&=\pa_y\CP_0\left(\CQ_0s_2\CQ_0\Ft\right)+\CP_0\left(\sum_{n\neq0,m\neq0}e^{i(\tn+\tilde{m})x}\left(s_{1,n}i\tilde{m}-\pa_ys_{2,n}\right)\Ft_{m}\right)\\
&=\pa_y\CP_0\left(\CQ_0s_2\CQ_0\Ft\right)-\sum_{n\neq0}(i\tilde{n}s_{1,n}+\pa_ys_{2,n})\Ft_{-n}=\pa_y\CP_0\left(\CQ_0s_2\CQ_0\Ft\right).
\end{aligned}
$$
Applying the above equality to $(\tilde{\Ff}_\FU, \tilde{\Ff}_\FH)$ yields
\begin{align*}
	\left(\CP_0\tilde{\Ff}_\FU, \CP_0\tilde{\Ff}_\FH\right)=\Big(\pa_y\CP_0\big(-\CQ_{0}q_2\CQ_{0}\Fq+\CQ_{0}r_2\CQ_{0}\Fr\big),~\pa_y\CP_0\big(-\CQ_{0}q_2\CQ_{0}\Fr+\CQ_{0}r_2\CQ_{0}\Fq\big)\Big),
\end{align*}
and then
\begin{align*}
	\left(\CI\CP_0\tilde{\Ff}_\FU,~ \pa_y^{-1}\CP_0\tilde{\Ff}_\FH\right)=\Big(\CP_0\big(-\CQ_{0}q_2\CQ_{0}\Fq+\CQ_{0}r_2\CQ_{0}\Fr\big),~\CP_0\big(-\CQ_{0}q_2\CQ_{0}\Fr+\CQ_{0}r_2\CQ_{0}\Fq\big)\Big).
\end{align*}
Since $|\CP_0f|\lesssim\|f\|_{L^1(\T_\varrho)}$, by  Cauchy-Schwarz inequality, 
it follows that
\begin{align}\label{L1-P0}
	\begin{aligned}
	\left\|\left(\CI\CP_0\tilde{\Ff}_\FU,~ \pa_y^{-1}\CP_0\tilde{\Ff}_\FH\right)\right\|_{L^1(\R_+)}
	&\lesssim\left\|-\CQ_{0}q_2\CQ_{0}\Fq+\CQ_{0}r_2\CQ_{0}\Fr\right\|_{L^1(\Omega)}+\left\|-\CQ_{0}q_2\CQ_{0}\Fr+\CQ_{0}r_2\CQ_{0}\Fq\right\|_{L^1(\Omega)}\\
	&\lesssim\left\|\CQ_{0}(\Fq,\Fr)\right\|_{L^2(\Omega)}^2\lesssim{\vep}^{\f12}\left\|(\Fq,\Fr)\right\|_{\CX}^2,
	\end{aligned}
\end{align}
and by Paserval equality,
\begin{align}\label{L2-P0}
\begin{aligned}
		\left\|\left(\CI\CP_0\tilde{\Ff}_\FU,~ \pa_y^{-1}\CP_0\tilde{\Ff}_\FH\right)\right\|_{L^2(\R_+)}
		&\leq\left\|-\CQ_{0}q_2\CQ_{0}\Fq+\CQ_{0}r_2\CQ_{0}\Fr\right\|_{L^2(\Omega)}+\left\|-\CQ_{0}q_2\CQ_{0}\Fr+\CQ_{0}r_2\CQ_{0}\Fq\right\|_{L^2(\Omega)}\\
	&\lesssim\left\|\CQ_{0}(\Fq,\Fr)\right\|_{L^\infty(\Omega)}\left\|\CQ_{0}(\Fq,\Fr)\right\|_{L^2(\Omega)}\lesssim\vep^{\f14}\left\|(\Fq,\Fr)\right\|_{\CX}^2,
\end{aligned}\end{align}
where we have used the fact  that $\|\CQ_0f\|_{L^\infty(\Omega)}\leq C\sum_{n\neq0}\|f_n\|_{L^\infty(\R_+)}.$
Thus we verify the first part of \eqref{2.3}. Moreover, by the commutativity of the weight $Z^{\f12}$ and the projection operators $\CP_0, \CQ_0$, similar to \eqref{L2-P0},  it holds
\begin{align}\label{w-L2-P0}
\begin{aligned}
&\left\|Z^{\f12}\left(\CI\CP_0\tilde{\Ff}_\FU,~ \pa_y^{-1}\CP_0\tilde{\Ff}_\FH\right)\right\|_{L^2(\R_+)}\lesssim\left\|\CQ_{0}(\Fq,\Fr)\right\|_{L^\infty(\Omega)}\left\|Z^{\f12}\CQ_{0}(\Fq,\Fr)\right\|_{L^2(\Omega)}\lesssim{\vep}^{\f12}\left\|(\Fq,\Fr)\right\|_{\CX}^2.
\end{aligned}\end{align}
 For the second part of \eqref{2.3}, we use \eqref{3.4} to have
\begin{align*}
\begin{aligned}
\|\CQ_0(\Fs\cdot\nabla\Ft)\|_{L^2(\Omega)}&\leq\|\Fs\cdot\nabla\Ft\|_{L^2(\Omega)}\\
&\lesssim \left(\|\CP_0s_1\|_{L^\infty(\R_+)}+\|\CQ_0\Fs\|_{L^\infty(\Omega)}\right)\|\nabla\CQ_{0}\Ft\|_{L^2(\Omega)}+\|\CQ_{0}s_2\|_{L^\infty(\Omega)}\|\pa_y\CP_0t_1\|_{L^2(\R_+)}.
\end{aligned}\end{align*}
Apply the above inequality to $\CQ_{0}(\tilde{\Ff}_\FU, \tilde{\Ff}_\FH)$ and obtain
\begin{align}\label{L2-Q0}
	\begin{aligned}
\left\|\CQ_{0}(\tilde{\Ff}_\FU, \tilde{\Ff}_\FH)\right\|_{L^2(\Omega)}\lesssim&\Big(\|\CP_0(\Fq,\Fr)\|_{L^\infty(\R_+)}+\|\CQ_0(\Fq,\Fr)\|_{L^\infty(\Omega)}\Big)\|\nabla\CQ_{0}(\Fq,\Fr)\|_{L^2(\Omega)}\\
&+\|\CQ_{0}(\Fq,\Fr)\|_{L^\infty(\Omega)}\|\pa_y\CP_0(\Fq,\Fr)\|_{L^2(\R_+)}+\left\|\CQ_{0}(\Ff_\FU, \Ff_\FH)\right\|_{L^2(\Omega)}\\
\lesssim&\vep^{-\f14}\left\|(\Fq,\Fr)\right\|_{\CX}^2+\left\|(\Ff_\FU, \Ff_\FH)\right\|_{L^2(\Omega)},
	\end{aligned}
\end{align}
which implies the second part of \eqref{2.3}. Moreover, we can show that 
\begin{align}\label{w-L2-Q0}
\begin{aligned}
\left\|Z^{\f12}\CQ_{0}(\tilde{\Ff}_\FU, \tilde{\Ff}_\FH)\right\|_{L^2(\Omega)}\lesssim&\Big(\|\CP_0(\Fq,\Fr)\|_{L^\infty(\R_+)}+\|\CQ_0(\Fq,\Fr)\|_{L^\infty(\Omega)}\Big)\|Z^{\f12}\nabla\CQ_{0}(\Fq,\Fr)\|_{L^2(\Omega)}\\
&+\|\CQ_{0}(\Fq,\Fr)\|_{L^\infty(\Omega)}\|Z^{\f12}\pa_y\CP_0(\Fq,\Fr)\|_{L^2(\R_+)}+\left\|Z^{\f12}\CQ_{0}(\Ff_\FU, \Ff_\FH)\right\|_{L^2(\Omega)}\\
\lesssim&\left\|(\Fq,\Fr)\right\|_{\CX}^2+\left\|Z^{\f12}(\Ff_\FU, \Ff_\FH)\right\|_{L^2(\Omega)}.
\end{aligned}
\end{align}

Next, we apply Proposition \ref{prop2.2} to the problem \eqref{3.1}, and obtain
	\begin{align*}
	\begin{aligned}
	&\left\|\Phi(\Fq,\Fr)\right\|_{\CX}=\|(\widetilde{\FU},\widetilde{\FH})\|_{\CX}\\
	&\lesssim\vep^{-1}\left(\left\|\left(\CI\CP_0\tilde{\Ff}_\FU,~ \pa_y^{-1}\CP_0\tilde{\Ff}_\FH\right)\right\|_{L^1}+\vep^{\f14}\left\|\left(\CI\CP_0\tilde{\Ff}_\FU,~ \pa_y^{-1}\CP_0\tilde{\Ff}_\FH\right)\right\|_{L^2}+\big\|Z^{\f12}\left(\CI\CP_0\tilde{\Ff}_\FU,~ \pa_y^{-1}\CP_0\tilde{\Ff}_\FH\right)\big\|_{L^2}\right)\\
	&\quad+\vep^{-\f14}|\log\vep|^{\frac{3+\eta}{2}}\left(\|\CQ_0(\tilde{\Ff}_\FU,\tilde{\Ff}_\FH)\|_{L^2(\Omega)}+\vep^{-\f14}\|Z^{\f12}\CQ_0(\tilde{\Ff}_\FU,\tilde{\Ff}_\FH)\|_{L^2(\Omega)}\right).
	\end{aligned}\end{align*}
Combining the above inequality and the estimates \eqref{L1-P0}-\eqref{w-L2-Q0} yields
\begin{align}\label{est-map}
\begin{aligned}
	\left\|\Phi(\Fq,\Fr)\right\|_{\CX}
	&\lesssim\vep^{-\f12}\left\|(\Fq,\Fr)\right\|_{\CX}^2+\vep^{-\f14}|\log\vep|^{\frac{3+\eta}{2}}\left(\vep^{-\f14}\left\|(\Fq,\Fr)\right\|_{\CX}^2+\left\|(\Ff_\FU, \Ff_\FH)\right\|_{L^2(\Omega)}+\vep^{-\f14}\left\|Z^{\f12}(\Ff_\FU, \Ff_\FH)\right\|_{L^2(\Omega)}\right)\\
	&\leq C\vep^{-\f12}|\log\vep|^{\frac{3+\eta}{2}}\left\|(\Fq,\Fr)\right\|_{\CX}^2+C\vep^{-\f14}|\log\vep|^{\frac{3+\eta}{2}}\left(\left\|(\Ff_\FU, \Ff_\FH)\right\|_{L^2(\Omega)}+\vep^{-\f14}\left\|Z^{\f12}(\Ff_\FU, \Ff_\FH)\right\|_{L^2(\Omega)}\right),
\end{aligned}\end{align}
where the constant $C>0$ is independent of $\vep$. Therefore, \eqref{est-map} shows that the map $\Phi$ is well-defined from $\CX$ to $\CX$.
Moreover, by a similar  argument as above we can show that for any two vectors $(\Fq_1,\Fr_1), (\Fq_2,\Fr_2)\in\CX$, it holds
$$\|\Phi(\Fq_1-\Fq_2,\Fr_1-\Fr_2)\|_{\CX}
\leq C\vep^{-\f12}|\log\vep|^{\f{3+\eta}{2}}\big(\|(\Fq_1,\Fr_1)\|_{\CX}+\|(\Fq_2,\Fr_2)\|_{\CX}\big)\big\|(\Fq_1-\Fq_2,\Fr_1-\Fr_2)\big\|_{\CX} .
$$
Now, we are able to choose suitable $(\Ff_{\FU},\Ff_{\FH})$ to establish the contractiveness of map $\Phi$ in a suitable domain of $\CX$. Indeed, for any fixed $0<\alpha<1$, let $$\displaystyle
\|(\Ff_{\FU},\Ff_{\FH})\|_{L^2}+\vep^{-\f14}\|Z^{\f12}(\Ff_\FU,\Ff_{\FH})\|_{L^2(\Omega)}\leq\delta_2\vep^{\f34}|\log\vep|^{-(3+\eta)}\quad\mbox{with}\quad \delta_2=\frac{\alpha(2-\alpha)}{4C^2}, 
$$ 
and we consider the domain of $\CX$:
$$
\mathcal{D}:=\left\{(\widetilde{\FU},\widetilde{\FH})\in\CX~\bigg|~\big\|(\widetilde{\FU},\widetilde{\FH})\big\|_{\CX}\leq \f{\alpha\vep^{\f12}}{2C|\log\vep|^{\frac{3+\eta}{2}}}\right\}. 
$$
It is straightforward to check that $\Phi$ is a contraction map from $\mathcal{D}$ to $\mathcal{D}$. Therefore, the existence and uniqueness of the solution to \eqref{1.2} follow from the fixed point theorem. In addition,  the solution $(\widetilde{\FU},\widetilde{\FH})$ satisfies 
$$\|(\widetilde{\FU},\widetilde{\FH})\|_{\CX}\leq\frac{2C}{2-\alpha}\vep^{-\f14}|\log\vep|^{\frac{3+\eta}{2}}\left(\|(\Ff_\FU,\Ff_{\FH})\|_{L^2(\Omega)}+\vep^{-\f14}\|Z^{\f12}(\Ff_\FU,\Ff_{\FH})\|_{L^2(\Omega)}\right).
$$
That is,  we obtain \eqref{est}. Finally, it is easy to see that $-\mu\vep\Delta\widetilde{\FU}+\nabla P\in L^2(\Omega)$ and $-\ka\vep\Delta\widetilde{\FH}\in L^2(\Omega)$. Then by ellipticity of Stokes operators and Laplacian operators, we have
$\nabla^2\widetilde{\FU},\nabla^2\widetilde{\FH}, \nabla P\in L^2(\Omega).$ Therefore, the proof of Theorem \ref{thm1} is completed.\qed\\
\section{Appendix}
We give the detailed proof of Lemma \ref{lem2.3} as follows.\\

\underline{\it Proof of Lemma \ref{lem2.3}:} We focus on the estimates on $\hh$ since other components can be treated in a similar way. Recall that in \eqref{n-unknown}
\begin{align}\label{h_n} \hh=\pa_y\left(\frac{\psi_n}{H_s}\right)=\f1{H_s}\left(h_n-\vep^{-\f12}b_p\psi_n\right).\end{align}
It follows by  the Hardy inequality that
\begin{align}
\|\hh\|_{L^p}\leq C\|h_n\|_{L^p}+ C\|Yb_p\|_{L^\infty_Y}\|y^{-1}\psi_n\|_{L^p}\leq C(1+\bar{M})\|h_n\|_{L^p}.\label{lem2.3-5}
\end{align}
For the weighted $L^2$-norm, one has by using \eqref{z1} that 
\begin{align}\label{lem2.3-6}
\begin{aligned}
\|Z^{\f12}\hh\|_{L^2}&\leq C\|Z^{\f12}h_n\|_{L^2}+ C\vep^{\f14}\left\|\sqrt{\frac{Z(y)}{y}}\right\|_{L^\infty}\|Y^{\f32}b_p\|_{L^\infty_Y}\|y^{-1}\psi_n\|_{L^2}\\
&\leq C\|Z^{\f12}h_n\|_{L^2}+C\bar{M}\vep^{\f14}\|h_n\|_{L^2},
\end{aligned}\end{align}
and by virtue of $g_n=-i\tn\psi_n$,
\begin{align}\label{lem2.3-7}
\begin{aligned}
\vep^{\f14}|\tilde{n}|\|Z^{\f12}\hh\|_{L^2}&\leq C\vep^{\f14}|\tilde{n}|\|Z^{\f12}h_n\|_{L^2}+ C\left\|\sqrt{\frac{Z(y)}{y}}\right\|_{L^\infty}\|Y^{\f12}b_p\|_{L^\infty_Y}\|g_n\|_{L^2}\\
&\leq C\vep^{\f14}|\tilde{n}|\|Z^{\f12}h_n\|_{L^2}+C\bar{M}\|g_n\|_{L^2}.
\end{aligned}\end{align}
Then, taking the $y-$derivative in \eqref{h_n} gives
\begin{align}\label{h_n-y}
\pa_y\hh=\f{1}{H_s}\left(\pa_yh_n-2\vep^{-\f12}b_ph_n+\vep^{-1}(b_p^2-\pa_Yb_p)\psi_n\right).
\end{align}
It follows
\begin{align}\label{equiv-1}
\begin{aligned}\vep^{\f12}\|\pa_y\hh\|_{L^2}&\leq C\vep^{\f12}\|\pa_yh_n\|_{L^2}+C\|b_p\|_{L^\infty_Y}\|h_n\|_{L^2}+C\big\|Y(b_p^2-\pa_Yb_p)\big\|_{L^\infty_Y}\|y^{-1}\psi_n\|_{L^2}\\
&\leq C\vep^{\f12}\|\pa_yh_n\|_{L^2}+C(1+\bar{M}^2)\|h_n\|_{L^2},	
\end{aligned}
\end{align}
and 
\begin{align}\label{equiv-2}
\begin{aligned}
&\vep^{\f14}\|Z^{\f12}\pa_y\hh\|_{L^2}\\
&\leq C\vep^{\f14}\|Z^{\f12}\pa_yh_n\|_{L^2}+C\left\|\sqrt{\frac{Z(y)}{y}}\right\|_{L^\infty}\left[\|Y^{\f12}b_p\|_{L^\infty_Y}\|h_n\|_{L^2}
+\big\|Y^{\f32}(b_p^2-\pa_Yb_p)\big\|_{L^\infty_Y}\|y^{-1}\psi_n\|_{L^2}\right]\\
&\leq C\vep^{\f14}\|Z^{\f12}\pa_yh_n\|_{L^2}+C(1+\bar{M}^2)\|h_n\|_{L^2}.
\end{aligned}
\end{align}
In conclusion, we combine \eqref{lem2.3-5}, \eqref{lem2.3-6}, \eqref{lem2.3-7}, \eqref{equiv-1} and \eqref{equiv-2} to get
\begin{align}\label{equiv1}
\begin{aligned}
&\|\hat{h}_n\|_{L^p}\lesssim_{\bar{M}} \|h_n\|_{L^p},\quad 1<p\leq\infty,\\
&\|Z^{\f12}\hat{h}_n\|_{L^2}+\varepsilon^{\f14}\|\hat{h}_n\|_{L^2}\lesssim_{\bar{M}}\left(\|Z^{\f12}h_n\|_{L^2}+\varepsilon^{\f14}\|h_n\|_{L^2}\right),\\
&\|\hat{h}_n\|_{L^2}+\vep^{\f12}\|\pa_y\hat{h}_n\|_{L^2}\lesssim_{\bar{M}}\left(\|h_n\|_{L^2}+\vep^{\f12}\|\pa_yh_n\|_{L^2}\right),\\
&\|\hat{h}_n\|_{L^2}+\vep^{\f14}\|Z^{\f12}(\pa_y\hat{h}_n,\tn \hat{h}_n)\|_{L^2}\lesssim_{\bar{M}}\left(\|(h_n,g_n)\|_{L^2}+\vep^{\f14}\|Z^{\f12}(\pa_yh_n,\tn h_n)\|_{L^2}\right).
\end{aligned}
\end{align}

On the other hand, we can express $h_n$ in terms of $\hh$. Indeed, from \eqref{h_n} one has $\psi_n=H_s\pa_y^{-1}\hat{h}_n$, and then
\begin{align}\label{hh_n}
h_n=H_s\left(\hh+\vep^{-\f12}b_p\pa_y^{-1}\hat{h}_n\right),\quad \pa_yh_n=H_s\left(\pa_yh_n+2\vep^{-\f12}b_ph_n+\vep^{-1}(b_p^2+\pa_Yb_p)\psi_n\right).
\end{align}
Comparing \eqref{h_n}, \eqref{h_n-y} with \eqref{hh_n} and noting $\psi_n=\pa_y^{-1}h_n$, we use 
a similar  argument as above to obtain
\begin{align}\label{equiv2}
\begin{aligned}
&\|h_n\|_{L^p}\lesssim_{\bar{M}}\|\hat{h}_n\|_{L^p},\quad 1<p\leq\infty,\\
&\|Z^{\f12}h_n\|_{L^2}+\varepsilon^{\f14}\|h_n\|_{L^2}\lesssim_{\bar{M}}\left(\|Z^{\f12}\hat{h}_n\|_{L^2}+\varepsilon^{\f14}\|\hat{h}_n\|_{L^2}\right),\\
&\|h_n\|_{L^2}+\vep^{\f12}\|\pa_yh_n\|_{L^2}\lesssim_{\bar{M}}\left(\|\hat{h}_n\|_{L^2}+\vep^{\f12}\|\pa_y\hat{h}_n\|_{L^2}\right),\\
&\|h_n\|_{L^2}+\vep^{\f14}\|Z^{\f12}(\pa_yh_n,\tn h_n)\|_{L^2}\lesssim_{\bar{M}}\left(\|(\hat{h}_n,\hat{g}_n)\|_{L^2}+\vep^{\f14}\|Z^{\f12}(\pa_y\hat{h}_n,\tn \hat{h}_n)\|_{L^2}\right).
\end{aligned}
\end{align}
Combining \eqref{equiv1} with \eqref{equiv2}, we complete the proof of the lemma.  \qed

~\\

\noindent{\bf Acknowledgment:} 
The research of C.-J. Liu was supported by the National Key R\&D Program of China (No.2020YFA0712000), National Natural Science Foundation of China (No.11743009, 11801364), the Strategic Priority Research Program of Chinese Academy of Sciences (No.XDA25010402), Shanghai Sailing Program (No.18YF1411700), a startup grant from Shanghai Jiao Tong University (No.WF220441906), and the Program for Professor of Special Appointment (Eastern Scholar) at Shanghai Institutions of Higher Learning.
The research of T. Yang  was supported by the General Research Fund of Hong Kong CityU No 11302518 and the  Fundamental Research Funds for the Central Universities No.2019CDJCYJ001. The first and second authors'
research was also  supported by Hong Kong Institute for Advanced Study No.9360157.

\end{document}